\newtheorem{theorem}{Theorem}[section]
\newtheorem{definition}[theorem]{Definition}
\newtheorem{lemma}[theorem]{Lemma}
\newtheorem{corollary}[theorem]{Corollary}
\newtheorem{remark}[theorem]{Remark}
\font\fivemsb=msbm5
\font\sevenmsb=msbm7
\font\tenmsb=msbm10
\begin{document}


\title
{\textbf{A generalized analytic Fourier-Feynman transform 
on the product function space $C_{a,b}^2[0,T]$ and related topics}}


\author{Jae Gil Choi\\
Department of Mathematics, Dankook University\\
Cheonan 330-714,  Korea\\
jgchoi@dankook.ac.kr
\and
David Skoug\\
Department of Mathematics, University of Nebraska-Lincoln\\ 
Lincoln, NE 68588-0130,   USA\\
dskoug@math.unl.edu
\and
Seung Jun Chang\\
Department of Mathematics,  Dankook University\\
Cheonan 330-714, Korea\\
sejchang@dankook.ac.kr}

\date{\empty}

\maketitle

\begin*

\thanks{{\bf Abstract}:  
In this paper 
we obtain  various results involving
the generalized analytic Fourier-Feynman transform 
and the first variation of functionals 
in a  Fresnel type class 
defined on the product function space $C_{a,b}^2[0,T]$.}\\

\addtolength{\columnsep}{15mm}

\thanks{{\bf Keywords}: 
Generalized Brownian motion,
Cameron-Martin like subspace,
generalized analytic Feynman integral,
generalized analytic Fourier-Feynman transform,
first variation,
generalized Fresnel type class,
change of scale formula,
translation theorem,
Cameron-Storvick type theorem.}\\

\thanks{{\bf Mathematics Subject Classification}: 
Primary 60J65, 28C20}\\

\end*

\maketitle

\tableofcontents

\setcounter{equation}{0}
\section{Introduction}\label{sec:1}

Let $H$ be a separable Hilbert space 
and let $\mathcal{M}(H)$ be the space of 
all complex-valued Borel measures on $H$.
The Fourier transform of $\sigma$ in $\mathcal{M}(H)$
is defined by
\begin{equation}\label{eq:intro001}
f(\sigma)(h')
   \equiv   \widehat{\sigma}(h')
   =\int_H \exp\{i\langle{h,h'}\rangle\}d\sigma(h), 
   \quad h'\in H.
\end{equation}
The set of all functions of the form \eqref{eq:intro001} 
is denoted by  $\mathcal{F}(H)$ 
and is called the Fresnel class of $H$.
Let $(H,B,\nu)$ be an abstract Wiener space.
It is known   \cite{KB84, KKK85} that
each functional of the form \eqref{eq:intro001}
can be extended to $B$ uniquely by
\begin{equation}\label{eq:intro002}
\widehat{\sigma} (x)
 =\int_H \exp\{i(h,x)^{\sim} \}d\sigma(h), 
 \quad x\in B
\end{equation}
where $(\cdot,\cdot)^{\sim}$ is a stochastic inner product 
between  $H$ and $B$. 
The Fresnel class $\mathcal{F}(B)$ of $B$ 
is the space of (equivalence classes of) 
all functionals of the form \eqref{eq:intro002}.
There has been a tremendous amount of papers and books 
in the literature on the Fresnel integral theory
and Fresnel classes  $\mathcal{F}(B)$ and $\mathcal{F}(H)$ 
on  abstract Wiener  and Hilbert spaces.
For an elementary introduction, see   \cite[Chapter 20]{JL00}.

\par
Furthermore, in \cite{KB84}, 
Kallianpur and Bromley introduced a larger class 
$\mathcal{F}_{A_1,A_2}$  than the Fresnel class  $\mathcal{F}(B)$ 
and showed the existence of the analytic Feynman integral 
of functionals in  $\mathcal{F}_{A_1,A_2}$  
for a successful treatment of certain physical problems
by means of a Feynman integral.
The Fresnel class $\mathcal{F}_{A_1,A_2}$ of $B^2$ 
is the space of (equivalence classes of) all
functionals on $B^2$ of the form 
\[
F(x_1,x_2)
 =\int_{H^2}\exp\bigg\{\sum_{j=1}^2 i(A_j^{1/2}h,x_j)^{\sim} \bigg\}
  d \sigma(h)
\]
where $A_1$ and $A_2$ are bounded, nonnegative and self-adjoint 
operators on $H$ and $\sigma\in \mathcal{M}(H)$.

\par
Let $A$ be a nonnegative self-adjoint operator on $H$ 
and let $\sigma$  be any complex Borel measure on $H$. 
Then the functional 
\begin{equation}\label{1eq:element-FB}
F(x)
 =\int_{H}\exp\{i(A^{1/2}h,x)^{\sim}\} d \sigma(h)
\end{equation}
belongs to the Fresnel class $\mathcal{F} (B)$ on $B$  
because it can be rewritten as
\[
\int_{H}\exp\{i(h,x)^{\sim}\} d\sigma_A(h)
\] 
for $\sigma_A=\sigma \circ(A^{1/2})^{-1}$.
For the functional $F$ given by equation \eqref{1eq:element-FB},
the  analytic Feynman integral 
$\int_B^{\text{\rm anf}_{1}} F(x) d \nu(x)$ with parameter $q=1$
(based on the connection with the Fresnel integral 
of $F$ in $\mathcal{F}(H)$ by Albeverio and H{\o}egh-Krohn,
the most important value  of the parameter $q$ is $q=1$)
on $B$ exists and is given by
\begin{equation}\label{1eq:Fresnel-integral}
\int_B^{\text{\rm anf}_{1}}F(x)d\nu(x)
=\int_H \exp\bigg\{-\frac{i}{2}\langle{Ah,h}\rangle\bigg\}d \sigma(h).
\end{equation}
If we choose  $A$ to  be the identity operator on $H$, 
then equation \eqref{1eq:Fresnel-integral} 
is equal to `the Fresnel  integral  $\mathcal{F}(f)$' 
of $f(\sigma)$,  studied by  Albeverio and H{\o}egh-Krohn in \cite{AH76}.
The concept of  the Fresnel integral is not based on 
the technique of analytic continuation, 
but appears  as solutions of important problems 
in quantum mechanics and in quantum field theory.

\par
Let $A$ be a bounded self-adjoint operator on $H$.
Then we may write 
\begin{equation*}\label{1eq:decomposition}
A=A_+-A_-
\end{equation*}
where $A_+$ and $A_-$ are each bounded, nonnegative and self-adjoint.
Take $A_1=A_+$ and $A_2=A_-$ in the definition of 
$\mathcal{F}_{A_1,A_2}$ above. 
For any $F$ in $ \mathcal{F}_{A_+,A_-}$, 
the analytic Feynman integral  of $F$ with parameter $(1,-1)$ 
is given by
\begin{equation}\label{1eq:op}
\int_{B^2}^{\text{\rm{anf}}_{(1,-1)}} 
  F(x_1,x_2)d(\nu\times\nu)(x_1,x_2)
 =\int_H
    \exp\bigg\{  -\frac{i}{2}\langle{ Ah,h\rangle}\bigg\} d \sigma(h).
\end{equation}
Kallianpur and Bromley, using this idea,  
studied and  investigated  relationships 
between the    Albeverio and H{\o}egh-Krohn's 
Fresnel integral with respect to a symmetric bilinear form 
$\Delta$ on $H$ (see \cite[Chapter 4]{AH76}) 
and the analytic Feynman integral given by equation \eqref{1eq:op}.

\par
In this paper we   extend the ideas in \cite{KB84}
from the functionals on $B^2$
to the functionals on the product function space $C_{a,b}^2[0,T]$.
The function space $C_{a,b}[0,T]$, induced by  
generalized Brownian motion,
was introduced by J. Yeh \cite{Yeh71, Yeh73}
and was used extensively by Chang and Chung \cite{CC96},
Chang and Skoug \cite{CS03}, and Chang, Chung and Skoug \cite{CCS09}.
In this paper we  also construct a concrete theory of 
the generalized analytic Fourier-Feynman 
transform(GFFT) and the first variation 
of functionals in a generalized 
Fresnel type class defined on $C_{a,b}^2[0,T]$.
Other work involving  GFFT theories   on $C_{a,b}[0,T]$ 
include \cite{CCS03, CCS10, CC12}.

\par
The Wiener process  used in \cite{Cameron51,CS80,CS82,CS87I,
CS87II,CS91,CKY00,CSY01,CS02,Johnson82,JS83,KB84,KKK85,YK07}
is stationary in time and is  free of  drift while the stochastic process 
used in this paper, as well as in \cite{CC96,CS03,CCS03,CCS04,CCS07,
CCS10,CCL09,CC12,Yeh71}, is nonstationary in time, is subject to   
a drift $a(t)$, and   can be used to explain  the position of 
the Ornstein-Uhlenbeck process in an external force field \cite{Nelson}.  
It turns out,  as noted in 
Remark \ref{re:effect-a} below, that including a  drift 
term $a(t)$ makes establishing the existence  of the GFFT
of functionals on   $C_{a,b}^2[0,T]$ very difficult. 
However, when  $a(t)\equiv 0$ and $b(t)=t$ 
on $[0,T]$, the general function space  $C_{a,b}[0,T]$ 
reduces to the Wiener space $C_0[0,T]$.

\setcounter{equation}{0}
\section{Definitions and preliminaries}\label{sec:2}

Let $D=[0,T]$ and let $(\Omega, \mathcal{W}, P)$
be a probability measure space. A real-valued 
stochastic process $Y$ on $(\Omega, \mathcal{W}, P)$
and $D$ is called a  generalized Brownian motion  
process if $Y(0,\omega)=0$ almost everywhere and for 
$0=t_1< t_2< \cdots < t_n \le T$, the $n$-dimensional 
random vector $(Y(t_1, \omega), \ldots, Y(t_n, \omega))$
is normally distributed  with density function
\[
\begin{split}
K_n(\vec t,\vec u)
&=\bigg(\prod_{j=1}^n 2
\pi \big(b(t_j)-b(t_{j-1})\big)\bigg)^{-1/2}\\
&\times
\exp\bigg\{-\frac12\sum_{j=1}^n 
\frac{\big[(u_j-a(t_j))
   -(u_{j-1}-a(t_{j-1}))\big]^2}{b(t_j)-b(t_{j-1})}
\bigg\}
\end{split}
\]
where $\vec u=(u_1,\ldots, u_n)$, $u_0=0$, 
$\vec t=(t_1,\ldots, t_n)$,  $a(t)$ is an absolutely 
continuous real-valued  function on $[0,T]$  
with $a(0)=0$, $a'(t) \in L^2[0,T]$,  
and $b(t)$ is a strictly increasing, 
continuously differentiable real-valued function
with $b(0)=0 $ and $b'(t) >0$ for each $t \in [0,T]$.

\par
As explained in \cite[p.18--20]{Yeh73}, $Y$ induces a probability measure 
$\mu$  on the measurable space $(\mathbb R^{D},\mathcal{B}^D)$ where $\mathbb R^D$
is the space of all real-valued functions $x(t)$, $t\in D$,
and $\mathcal{B}^D$ is the smallest $\sigma$-algebra of subsets of 
$\mathbb R^{D}$  with respect to which all the coordinate evaluation maps $e_t (x) = x(t)$ 
defined on $\mathbb R^{D}$ are measurable. The triple $(\mathbb R^{D},\mathcal{B}^D,\mu)$  
is a probability measure space. This measure space is called the function
space induced by the generalized Brownian motion process $Y$ determined by $a(\cdot)$
and $b(\cdot)$.

\par
In  \cite{Yeh73}, Yeh shows that the generalized 
Brownian motion process $Y$ determined by $a(\cdot)$ 
and $b(\cdot)$ is a Gaussian process with mean function 
$a(t)$ and covariance function $r(s,t)=\min\{b(s),b(t)\}$,
and that the probability measure $\mu$ induced by $Y$, 
taking a separable version, is supported by 
$C_{a,b}[0,T]$ (which is equivalent to the Banach space of 
continuous functions $x$ on $[0,T]$ with $x(0)=0$  
under the sup norm).  Hence,  $(C_{a,b}[0,T],\mathcal 
B(C_{a,b}[0,T]),\mu )$ is the function space induced by $Y$
where $\mathcal B(C_{a,b}[0,T])$ is the Borel 
$\sigma$-algebra of $C_{a,b}[0,T]$.
We then complete this function space
to obtain $(C_{a,b}[0,T],\mathcal W(C_{a,b}[0,T]),\mu )$ 
where $\mathcal W(C_{a,b}[0,T])$ is the set of all 
Wiener measurable subsets of $C_{a,b}[0,T]$.

\par
A subset $B$ of $C_{a,b}[0,T]$ is said to be 
scale-invariant measurable    
provided $\rho B$ is $\mathcal{W}(C_{a,b}[0,T])$-measurable for all $\rho>0$, 
and a scale-invariant measurable set $N$ 
is said to be a scale-invariant null set 
provided $\mu(\rho N)=0$ for all $\rho>0$. 
A property that holds except on a scale-invariant null set 
is  said to hold scale-invariant almost everywhere(s-a.e.).
A functional $F$ is said to be scale-invariant measurable 
provided $F$ is defined on a scale-invariant measurable set 
and $F(\rho\,\,\cdot\,)$ 
is $\mathcal{W}(C_{a,b}[0,T])$-measurable for every $\rho>0$.
If two functionals $F$ and $G$ defined on $C_{a,b}[0,T]$ 
are equal s-a.e.,   we write $F\approx G$.

\par
Let $L_{a,b}^2[0,T]$ be the set of functions on $[0,T]$ 
which are Lebesgue measurable and square integrable 
with respect to the Lebesgue-Stieltjes measures on $[0,T]$ 
induced by  $a(\cdot)$ and $b(\cdot)$; 
i.e., 
\[
L_{a,b}^2[0,T] 
=\bigg\{   v :  \int_{0}^{T} v^2 (s) db(s)  <\infty  \hbox{ and }
                \int_0^T v^2 (s) d |a|(s) < \infty \bigg\}         
\]
where $|a|(\cdot)$ is the   total variation   function of $a(\cdot)$.
Then $L_{a,b}^2[0,T]$ is a separable Hilbert space
with inner product defined by
\[
(u,v)_{a,b}=\int_0^T u(t)v(t)d[ b(t) +|a| (t)].               
\]
In particular, 
note that $\| u\|_{a,b}\equiv\sqrt{(u,u)_{a,b}} =0$ 
if and only if $u(t)=0$ a.e. on $[0,T]$.

\par
Let $\{ \phi_{j}\} _{j=1}^{\infty}$ be a complete orthonormal  set 
of real-valued functions of bounded variation on $[0,T]$
such that 
\[ 
(\phi_{j},\phi_{k})_{a,b} 
 = \begin{cases} 
   0    &,\quad  j\not=k\\
   1    &,\quad  j=k 
\end{cases} .
\]
Then for each $v \in L_{a,b}^2[0,T] $, 
the Paley-Wiener-Zygmund(PWZ) stochastic integral 
$\langle{v,x}\rangle$ is defined by the formula
\[ 
\langle{v,x}\rangle
  = \lim_{n\to\infty} \int_{0}^{T}\sum_{j=1}^{n}
        (v,\phi_{j})_{a,b}\phi_{j}(t)dx(t)
\]
for all $x \in C_{a,b}[0,T]$ for which the limit exists; 
one  can show that for each $v \in L_{a,b}^2[0,T]$,
the PWZ stochastic integral $\langle{v,x}\rangle$ exists for 
$\mu$-a.e. $x \in C_{a,b}[0,T]$ and 
if $v$ is of bounded variation on $[0,T]$, 
then the PWZ  stochastic integral $\langle{v,x}\rangle$ 
equals the Riemann-Stieltjes integral $\int_{0}^{T}v(t)dx(t)$ 
for s-a.e. $x \in C_{a,b}[0,T]$.
For more details, see \cite{CS03}.

\begin{remark} 
For each $v \in L_{a,b}^2[0,T]$, the PWZ stochastic integral 
$\langle{v,x}\rangle$ is a Gaussian random variable on $C_{a,b}[0,T]$ 
with mean $\int _{0}^{T}v(s)da(s)$ and 
variance $\int_{0}^{T}v^{2}(s)db(s)$.
Note that for all $u,v \in L_{a,b}^2[0,T]$,
\[
\int_{C_{a,b}[0,T]} 
  \langle{u,x}\rangle\langle{v,x}\rangle   d \mu(x)
= \int _{0}^{T}u(s)v(s)db(s) 
 + \int_{0}^{T}u(s)da(s) \int_{0}^{T}v(s)da(s).
\]
Hence we see that for all $u,v \in L_{a,b}^2[0,T]$, 
$\int_{0}^{T}u(s)v(s)db(s)= 0$
if and only if $\langle{u,x\rangle}$ and $\langle{v,x\rangle}$ 
are independent random variables.
\end{remark}

\begin{remark}\label{remark:skoug01}
Recall that above, 
as well as in papers \cite{CS03, CCS03, CCS04, CCS07, 
CCS10, CCS09, CC12},
we require that $a:[0,T]\to \mathbb R$ 
is an absolutely continuous function 
with $a(0)=0$ and with $\int_0^T |a'(t)|^2 d t<\infty$.
Now throughout this paper we add the requirement
\begin{equation}\label{eq:condi-mean01}
\int_0^T|a'(t)|^2 d |a|(t)<\infty.
\end{equation}
\end{remark}

\begin{remark}\label{remark:skoug02}
Note that the function $a(t)=t^{2/3}$,
$0\le t\le T$ doesn't satisfy condition \eqref{eq:condi-mean01}
even though its derivative is an element of $L^2[0,T]$.
\end{remark}

\begin{remark}\label{remark:skoug03}
The function $a:[0,T]\to \mathbb R$  satisfies 
the requirements in Remark \ref{remark:skoug01} 
if and only if
the function $a'$ is an element of $L_{a,b}^2[0,T]$.
\end{remark}

\par
The following Cameron-Martin like subspace of $C_{a,b}[0,T]$
plays an important role throughout this paper.

\par
Let
\[
C_{a,b}'[0,T]
 =\bigg\{ w \in C_{a,b}[0,T] : w(t)=\int_0^t z(s) d b(s) 
       \hbox{  for some   } z \in L_{a,b}^2[0,T]  \bigg\}.        
\]
For $w\in C_{a,b}'[0,T]$, 
with $w(t)=\int_0^t z(s) d b(s)$ for $t\in [0,T]$, 
let $D_t : C_{a,b}'[0,T] \to L_{a,b}^2[0,T]$ 
be defined by the formula 
\begin{equation}\label{eq:Dt}
D_t w= z(t)=\frac{w'(t)}{b'(t)}. 
\end{equation}
Then $C_{a,b}' \equiv C_{a,b}'[0,T]$ with inner product
\[
(w_1, w_2)_{C_{a,b}'}
=\int_0^T  D_t w_1  D_t w_2  d b(t) 
=\int_0^T z_1(t)z_2(t) d b(t)    
\]
is a separable  Hilbert space.

\par
Note that  the linear operator given by equation \eqref{eq:Dt} 
is a  homeomorphism.
In fact, the inverse operator 
$D_t^{-1} : L_{a,b}^2[0,T] \to C_{a,b}'[0,T]$ is given by
\[
D_t^{-1} z=\int_0^t z(s) d b(s).			 
\]
It is easy to show that  $D_t^{-1}$ is a bounded operator
since
\[
\begin{split}
\|D_t^{-1}z\|_{C_{a,b}'}
&=\bigg\|\int_0^t z(s) d b(s)\bigg\|_{C_{a,b}'}
=\bigg(\int_0^T z^2(t)db(t)\bigg)^{\frac12}\\
&\le\bigg(\int_0^T z^2(t)d[b(t)+|a|(t)]\bigg)^{\frac12}=\|z\|_{a,b}.
\end{split}
\]
Applying the open mapping theorem, 
we see that $D_t$ is also bounded 
and there exist positive real numbers $\alpha$ and $\beta$ 
such that
$\alpha \| w\|_{C_{a,b}'} \le \| D_t w \|_{a,b}\le \beta \| w\|_{C_{a,b}'}$
for all $w\in C_{a,b}'[0,T]$.
Hence we see that  the Borel $\sigma$-algebra 
on $(C_{a,b}'[0,T], \|\cdot\|_{C_{a,b}'})$ is given by
\[
 \mathcal{B}(C_{a,b}'[0,T])
 =  \{D_t^{-1} (E) : E \in \mathcal{B}(L_{a,b}^2[0,T])\}.
\]

\begin{remark}\label{remark:skoug04}
Our conditions on $b:[0,T]\to \mathbb R$
imply that $0<\delta <b'(t)<   M$
for some positive real numbers $\delta$ and $M$, and  all $t\in[0,T]$.
\end{remark}

\par
The following lemma follows quite easily
from Remarks \ref{remark:skoug01}, 
\ref{remark:skoug03} and \ref{remark:skoug04} above
and the fact that 
$a(t)=\int_0^t \frac{a'(s)}{b'(s)}d b(s)$ on $[0,T]$.

\begin{lemma}
The function  $a:[0,T]\to \mathbb R$
satisfies the conditions in Remark \ref{remark:skoug01}
if and only if $a$ is an element of $C_{a,b}'[0,T]$.
\end{lemma}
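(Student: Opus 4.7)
The plan is to directly exhibit the element $z\in L_{a,b}^2[0,T]$ that represents $a$ in the definition of $C_{a,b}'[0,T]$, and then to show that the membership conditions in Remark \ref{remark:skoug01} are precisely equivalent to the square-integrability of this $z$ against $d b+d|a|$. The hint provides the natural candidate: if $a$ is absolutely continuous with $a(0)=0$, then
\[
a(t)=\int_0^t a'(s)\,ds=\int_0^t\frac{a'(s)}{b'(s)}\,db(s),
\]
so one sets $z(s)=a'(s)/b'(s)$. Conversely, if $a(t)=\int_0^t z(s)\,db(s)$ for some $z\in L_{a,b}^2[0,T]$, then $a$ is automatically absolutely continuous with $a(0)=0$ and $a'(t)=z(t)b'(t)$ a.e.

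For the forward direction, I would assume the conditions in Remark \ref{remark:skoug01} hold. By Remark \ref{remark:skoug03} this is equivalent to $a'\in L_{a,b}^2[0,T]$. Using the two-sided bound $0<\delta<b'(t)<M$ from Remark \ref{remark:skoug04}, I would estimate
\[
\int_0^T z^2(s)\,db(s)=\int_0^T\frac{(a'(s))^2}{(b'(s))^2}b'(s)\,ds
\le\frac{1}{\delta}\int_0^T(a'(s))^2\,ds<\infty,
\]
and
\[
\int_0^T z^2(s)\,d|a|(s)\le\frac{1}{\delta^2}\int_0^T(a'(s))^2\,d|a|(s)<\infty,
\]
so $z\in L_{a,b}^2[0,T]$ and $a\in C_{a,b}'[0,T]$.

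For the reverse direction, I would assume $a(t)=\int_0^t z(s)\,db(s)$ with $z\in L_{a,b}^2[0,T]$. Then $a(0)=0$, $a$ is absolutely continuous (as an indefinite Lebesgue integral of $zb'$, which is integrable since $b'$ is bounded), and $a'(t)=z(t)b'(t)$ a.e. Again using Remark \ref{remark:skoug04} I get
\[
\int_0^T(a'(t))^2\,dt=\int_0^T z^2(t)(b'(t))^2\,dt\le M\int_0^T z^2(t)\,db(t)<\infty
\]
and
\[
\int_0^T(a'(t))^2\,d|a|(t)=\int_0^T z^2(t)(b'(t))^2\,d|a|(t)\le M^2\int_0^T z^2(t)\,d|a|(t)<\infty,
\]
which recovers $a'\in L_{a,b}^2[0,T]$ and hence, via Remark \ref{remark:skoug03}, the conditions in Remark \ref{remark:skoug01}.

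There is no real obstacle here; the argument is a bookkeeping exercise reducing the membership in $C_{a,b}'[0,T]$ to the square-integrability assertion of Remark \ref{remark:skoug03} via the uniform bounds on $b'$. The only mild care point is to notice that absolute continuity of $a$ in the reverse direction follows from the fact that $zb'$ is Lebesgue integrable (by Cauchy--Schwarz together with the boundedness of $b'$), so that $a(t)=\int_0^t z(s)b'(s)\,ds$ is indeed the indefinite Lebesgue integral of its a.e. derivative.
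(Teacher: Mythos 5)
Your proof is correct and follows exactly the route the paper indicates: the paper offers no written proof, stating only that the lemma ``follows quite easily'' from Remarks \ref{remark:skoug01}, \ref{remark:skoug03} and \ref{remark:skoug04} together with the identity $a(t)=\int_0^t \frac{a'(s)}{b'(s)}\,db(s)$, and your argument is precisely the careful filling-in of that sketch with the candidate $z=a'/b'$ and the two-sided bounds on $b'$.
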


\par
Throughout this paper for $w\in C_{a,b}'[0,T]$, 
with $w(t)=\int_0^t z(s) d b(s)$ for $t\in [0,T]$,
we will use the notation $(w,x)^{\sim}$ instead of 
$\langle{z,x\rangle}=\langle{D_t w,x\rangle}$. 
Then we have the following assertions.
\begin{itemize}
\item[(i)]
For each $w\in  C_{a,b}'[0,T]$, 
the random variable $x \mapsto (w,x)^{\sim}$
is Gaussian with mean $(w,a)_{C_{a,b}'}$ 
and variance $\|w\|_{C_{a,b}'}^2$.
\item[(ii)]
$(w,\alpha x)^{\sim}=( \alpha w, x)^{\sim}=\alpha (w,x)^{\sim}$ 
for any real number $\alpha$,   $w\in C_{a,b}'[0,T]$ 
and $x \in C_{a,b}[0,T]$.
\item[(iii)]
If $\{ w_1,   \ldots, w_n\}$ is an orthonormal set in $C_{a,b}'[0,T]$, 
then the random variables $(w_i,x)^{\sim}$'s are independent.
\end{itemize}

\par 
We denote the function space integral of a 
$\mathcal{W}(C_{a,b}[0,T])$-measurable functional $F$ by
\[ 
E[F]\equiv E_x[F(x)]= \int_{C_{a,b}[0,T]}F(x)d\mu (x)
\]
whenever the integral exists.

\begin{remark}\label{remark:skoug05}
For each $t\in[0,T]$, let
\[
\beta_t(s)=\int_0^s \chi_{[0,t]}(\tau) d b(\tau)
=\begin{cases}
    b(s), \quad & 0\le s \le t\\
    b(t), \quad & t\le s \le T
\end{cases}.
\]
Then the family of functions $\{\beta_t: 0\le t\le T\}$
from $C_{a,b}'[0,T]$ has the reproducing property
\[
(w,\beta_t)_{C_{a,b}'}=w(t)
\]
for all $w\in C_{a,b}'[0,T]$.
Note that $\beta_t(s)=\min\{b(s), b(t)\}$,
the covariance function associated with the 
generalized Brownian motion $Y$ used in this paper.
We also note that for each $x\in C_{a,b}[0,T]$,
\[
x(t)=\int_0^T \chi_{[0,t]}(s)dx(s)=(\beta_t,x)^{\sim}.
\]
\end{remark}

\begin{remark}
Let $A:C_{a,b}'[0,T]\to C_{a,b}'[0,T]$ be
a bounded linear operator with adjoint $A^*$.
Let $w$ and $g$ be elements of $C_{a,b}'[0,T]$ 
with $g(t)=\int_0^t z(s)db(s)$
for some $z \in L_{a,b}^2[0,T]$. Then
\[
\begin{split}
E_x[(A w,x)^{\sim}]
 &=E_x[\langle{ D_t A w,x}\rangle]
  =\int_0^T D_tA wd a (t)
  =\int_0^T D_tA w \frac{a' (t)}{b'(t)}d b(t)\\
 &=\int_0^T D_tA w D_ta d b(t)
  =(A w,a)_{C_{a,b}'}
  =( w,A^* a)_{C_{a,b}'}
\end{split}
\]
and
\[
\begin{split}
(A w,g)^{\sim}
 &=\langle{D_tA w, g}\rangle
  =\int_0^T D_tA w dg(t) 
  =\int_0^T D_tA w \frac{g'(t)}{b'(t)} d b(t)\\
 &=\int_0^T D_tA w D_tg (t) d b(t)
  =(A w,g)_{C_{a,b}'}
  =( w,A^* g)_{C_{a,b}'}.
\end{split}
\]

\par
Next, letting $A$ be the identity operator, 
yields the formulas
\[
E_x[(w,x)^{\sim}]
=\int_0^T D_tg d a(t)=(g,a)_{C_{a,b}'}
\hbox{\,\, and\,\, }
(w,g)^{\sim}=(w,g)_{C_{a,b}'}.
\]
\end{remark}

\par
In this paper,  as possible, we adopt 
the definitions and notations of  \cite{CS03, CCS04, CC12}
for the definitions of 
the generalized analytic Feynman integral  and the GFFT
of functionals on  $C_{a,b}[0,T]$.

\par
The following integration formula is used several times in this paper:
\begin{equation}\label{eq:int-formula}
\int_\mathbb{R} \exp \{-\alpha u^2+ \beta u \} du 
= \sqrt{\frac{\pi}{\alpha}} 
 \exp \Big\{ \frac{\beta^2}{4\alpha}  \Big\}
\end{equation}
for  complex numbers $\alpha $ and $\beta$ with $\hbox{\rm Re}(\alpha)> 0$.

\setcounter{equation}{0}
\section{The GFFT of functionals in a Banach algebra $\mathcal F_{A_1, A_2}^{\,\,a,b}$}\label{sec:3}

Let $\mathcal{M}(C_{a,b}'[0,T])$ be the space 
of  complex-valued, countably additive (and hence finite)  
Borel measures on $C_{a,b}'[0,T]$.
$\mathcal{M}(C_{a,b}'[0,T])$ is a Banach algebra 
under the total variation norm and with 
convolution as multiplication.

\par
We define the Fresnel type class $\mathcal F(C_{a,b}[0,T])$ 
of functionals on $C_{a,b}[0,T]$  as the space 
of all stochastic Fourier transforms of elements 
of $\mathcal{M}(C_{a,b}'[0,T])$; that is, 
$F \in \mathcal F (C_{a,b}[0,T])$ 
if and only if 
there exists a measure $f$ in $\mathcal M(C_{a,b}'[0,T])$
such that 
\begin{equation}\label{eq:element}
F(x) =\int_{C_{a,b}'[0,T]}\exp\{i(w,x)^{\sim}\} d f(w)                             
\end{equation}
for s-a.e. $x\in C_{a,b}[0,T]$.
More precisely, since we shall identify
functionals which coincide s-a.e. on $C_{a,b}[0,T]$,
$\mathcal{F}(C_{a,b} [0,T])$  can be   regarded 
as the space of all s-equivalence classes of 
functionals of the form \eqref{eq:element}.

\par
The Fresnel type class $\mathcal F(C_{a,b}[0,T])$ 
is a Banach algebra  with norm
\[
\|F\|=\|f\|=\int_{C_{a,b}'[0,T]}d|f|(w).
\]
In fact, the correspondence $f \mapsto F$
is injective, carries convolution into pointwise multiplication 
and is a Banach algebra isomorphism 
where $f$ and $F$ are related by \eqref{eq:element}.

\begin{remark}\label{re:physics}
{\rm (1)} 
The Banach algebra $\mathcal{F}(C_{a,b}[0,T])$ contains 
several  interesting functions which arise naturally 
in quantum mechanics:
Let $\mathcal{M}(\mathbb R)$ be the class 
of   $\mathbb C$-valued  countably additive measures on $\mathcal{B}(\mathbb R)$, 
the Borel class of $\mathbb R$.
For $\nu\in \mathcal{M}(\mathbb R)$, 
the Fourier transform $\widehat{\nu}$ of $\nu$
is a complex-valued function defined on $\mathbb R$ 
by the formula
\[
\widehat{\nu}(u)=\int_{\mathbb R}\exp\{i uv\} d \nu(v).
\]

\par
Let $\mathcal{G}$ be the set of all  complex-valued functions 
on  $[0,T]\times \mathbb R$ of the form
$\theta(s,u)=\widehat{\sigma}_s (u)$
where $\{ \sigma_s : 0\le s\le T\}$ 
is a family from $\mathcal{M}(\mathbb R)$ 
satisfying the following two conditions:
\begin{enumerate}
   \item[(i)]  
For every $E \in \mathcal{B}(\mathbb R)$, $\sigma_s (E)$ 
is Borel measurable in $s$, 
   \item[(ii)]  
$\int_0^T \| \sigma_s\| d b(s)< +\infty  $.  
\end{enumerate}

\par
Let $\theta \in \mathcal{G}$ and let $H$ be given by
\[
H(x)=\exp\bigg\{\int_0^T \theta(t,x(t)) d t\bigg\}
\]
for s-a.e. $x\in C_{a,b}[0,T]$.
It was shown  in \cite{CCL09} that 
the function $\theta(t,u)$ is Borel-measurable 
and  that $\theta(t,x(t))$,  $\int_0^T \theta(t,x(t)) d t$  
and $H(x)$ are elements of $\mathcal{F}(C_{a,b}[0,T])$.
This fact is relevant to quantum mechanics 
where exponential functions play a prominent role. 
For more details, see \cite{CCL09}.

{\rm (2)}
In \cite{CS03} Chang and Skoug introduced 
a Banach algebra $\mathcal S(L_{a,b}^2[0,T])$
of functionals on $C_{a,b}[0,T]$ given by
\[
\mathcal{S}(L_{a,b}^2[0,T])
=\bigg\{F: F(x)\approx\int_{L_{a,b}^2[0,T]}
            \exp\{i\langle{v,x\rangle}\}d \sigma(v),
            \, \sigma\in \mathcal M (L_{a,b}^2[0,T]) \bigg\},
\]
and then showed that the generalized analytic Feynman integral 
and the GFFT exist for $F\in \mathcal{S}(L_{a,b}^2[0,T])$ 
under appropriate conditions.
If  
\[
F(x) \approx
 \int_{L_{a,b}^2[0,T]}\exp\{i\langle{v,x\rangle}\}d \sigma(v)
\]
for some $\sigma\in \mathcal M (L_{a,b}^2[0,T])$, 
then we have 
\[
F(x) \approx
\int_{C_{a,b}'[0,T]}\exp\{i(w,x)^{\sim} \}d (\sigma\circ D_t)(w)
\]
where $D_t : C_{a,b}'[0,T]\to L_{a,b}^2[0,T]$ 
is given by equation \eqref{eq:Dt} above.
Conversely, if
\[
F(x)  \approx
\int_{C_{a,b}'[0,T]}\exp\{i(w,x)^{\sim}\}d f(w)
\]
for some $f\in \mathcal M (C_{a,b}'[0,T])$, 
then we have 
\[
F(x) \approx
 \int_{L_{a,b}^2[0,T]}\exp\{i\langle{v,x\rangle} \}d (f\circ D_t^{-1})(v).
\]
Thus we have that $F\in \mathcal{S}(L_{a,b}^2[0,T])$ 
if and only if $F\in \mathcal F (C_{a,b}[0,T])$.

\par
When $a(t)\equiv 0$ and $b(t)=t$ on $[0,T]$,
$\mathcal{S}(L_{a,b}^2[0,T])$ reduces to the 
Banach algebra $\mathcal{S}$ introduced by Cameron and Storvick \cite{CS80}.
In \cite[pp.609--629]{JL00}, Johnson and Lapidus
give a very complete summary of various relationships
which exist among  the Banach algebras $\mathcal{S}$, $\mathcal{F}(H)$
and $\mathcal{F}(B)$.
\end{remark}

\par
Let $A$ be a nonnegative self-adjoint operator on $C_{a,b}'[0,T]$ 
and $f$ any  complex measure on $C_{a,b}'[0,T]$. 
Then the functional 
\begin{equation}\label{eq:F-A}
F(x)
=\int_{C_{a,b}'[0,T]}\exp\{i(A^{1/2}w,x)^{\sim}\} d f(w)
\end{equation}
belongs to $\mathcal F (C_{a,b}[0,T])$  
because it can be rewritten as
$\int_{C_{a,b}'[0,T]}\exp\{i( w,x)^{\sim}\} d f_A(w)$ 
for $f_A=f \circ(A^{1/2})^{-1}$.
Let $A$ be self-adjoint  but not nonnegative. 
Then $A$ has the form
\begin{equation}\label{eq:decomposition}
A=A^+-A^- 
\end{equation}
where both $A^+$ and $A^-$ are bounded, nonnegative self-adjoint operators.

\par
In this section we will extend the ideas of \cite{KB84}
to obtain  expressions of the generalized analytic Feynman integral 
and the GFFT  of functionals of the form \eqref{eq:F-A} 
when $A$ is no longer required to be nonnegative.
To do this, we  will introduce   definitions and notations 
analogous to  those in \cite{CS03, CCS04, CC12}.

\par
Let $\mathcal{W}(C_{a,b}^2[0,T])$ denote the class of all Wiener 
measurable subsets of the product function space
$C_{a,b}[0,T]\times C_{a,b}[0,T]\equiv C_{a,b}^2[0,T]$.
A subset $B$ of   $C_{a,b}^{2}[0,T]$ 
is said to be  scale-invariant measurable
provided  $\{(\rho_1 x_1, \rho_2 x_2): (x_1, x_2)\in B\}$ 
is $\mathcal{W}(C_{a,b}^2[0,T])$-measurable
for every $\rho_1>0$ and $\rho_2>0$, 
and a scale-invariant measurable subset  $N$ of $C_{a,b}^{2}[0,T]$ 
is said  to be scale-invariant null  provided 
$(\mu\times\mu)(\{(\rho_1 x_1, \rho_2 x_2): (x_1, x_2)\in N\} )=0$
for every $\rho_1>0$ and $\rho_2>0$. 
A property that holds except on a scale-invariant null set 
is  said to hold s-a.e. on $C_{a,b}^2[0,T]$.
A functional $F$ on  $C_{a,b}^2[0,T]$ is said to be 
scale-invariant measurable  provided 
$F$ is defined on a scale-invariant measurable set 
and $F(\rho_1\,\,\cdot\,, \rho_2\,\,\cdot\,)$ 
is $\mathcal{W}(C_{a,b}^2[0,T])$-measurable 
for every $\rho_1>0$ and $\rho_2>0$.
If two functionals $F$ and $G$ defined on $C_{a,b}^2[0,T]$ 
are equal s-a.e., 
then we write $F\approx G$.
For more details, see \cite{CKY00, KB84}.

\par
We denote the product function space integral of a 
$\mathcal{W}(C_{a,b}^2[0,T])$-measurable functional $F$ by
\[ 
E[F]
  \equiv E_{\vec x}[F(x_1, x_2)]
  = \int_{C_{a,b}^2[0,T]}F(x_1,x_2)d(\mu\times\mu) (x_1,x_2)
\]
whenever the integral exists.

\par
Throughout this paper,
let $\mathbb C$, ${\mathbb C}_+$ and $\widetilde{\mathbb C}_+$ 
denote the complex numbers, the complex numbers with positive real part
and the nonzero complex numbers  with nonnegative real part, 
respectively.
Furthermore, for all $\lambda \in \widetilde{\mathbb C}_+$, 
$\lambda^{-1/2} (\hbox{or } \lambda^{1/2})$
is always chosen to have positive real part.
We also  assume that every functional $F$ on $C_{a,b}^2[0,T]$ 
we consider is s-a.e. defined and is scale-invariant measurable.

\begin{definition}\label{def:gfi}
Let $\mathbb{C}_+^2=\{ \vec\lambda =(\lambda_1,\lambda_2) \in \mathbb C^{2} : 
                  \text{\rm Re} (\lambda_j)  > 0 \hbox{ for } j=1,2 \}$ 
and let
$\widetilde{\mathbb C}_+^2=\{ \vec\lambda=(\lambda_1 , \lambda_2) \in \mathbb C^{2} : 
                  \lambda_j \ne 0 \hbox{ and }  \text{\rm Re} (\lambda_j)\ge 0    
                  \hbox{ for } j=1,2 \}$.
Let $F: C_{a,b}^2[0,T] \to \mathbb C $ be such that 
for each $\lambda_1>0$ and $ \lambda_2>0 $,
the function space integral
\[
J(\lambda_1, \lambda_2)=
 \int_{C_{a,b}^2[0,T]}F(\lambda_1^{-1/2}x_1, \lambda_2^{-1/2}x_2) 
   d (\mu\times\mu)(x_1, x_2)\\
\]
exists.
If there exists a function $J^*(\lambda_1,\lambda_2)$  
analytic in ${\mathbb C}_+^2$ such that  
$J^*(\lambda_1,\lambda_2)=J(\lambda_1,\lambda_2)$ 
for all $\lambda_1>0$ and $\lambda_2 >0$,
then $J^*(\lambda_1,\lambda_2)$ is defined to be 
the  analytic function space  integral of $F$ 
over $C_{a,b}^2[0,T]$  with parameter $\vec\lambda = (\lambda_1,\lambda_2)$, 
and for $\vec\lambda \in \mathbb C_+^2$ we write 
\[
E^{\text{\rm an}_{\vec\lambda}}[F] 
\equiv E_{\vec x}^{\text{\rm an}_{\vec\lambda}}[F(x_1, x_2)]
\equiv E_{x_1,x_2}^{\text{\rm an}_{(\lambda_1,\lambda_2)}}[F(x_1, x_2)]
= J^*(\lambda_1,\lambda_2).  
\]
Let $q_1$ and $q_2$ be nonzero real numbers. 
Let $F$ be a functional such that
$E^{\text{\rm an}_{\vec\lambda}}[F]$ exists 
for all $\vec\lambda \in \mathbb C_+^2$.
If the following limit exists, 
we call it the generalized analytic Feynman integral of $F$ 
with parameter $\vec q=(q_1,q_2)$ 
and we write
\[
E^{\text{\rm  anf}_{\vec q}}[F]
 \equiv E_{\vec x}^{\text{\rm  anf}_{\vec q}}[F(x_1, x_2)]
 \equiv E_{x_1,x_2}^{\text{\rm  anf}_{(q_1,q_2)}}[F(x_1, x_2)]
  =\lim_{\vec\lambda \to -i \vec q } E^{\text{\rm  an}_{\vec\lambda}}[F]
\]
where $\vec\lambda=(\lambda_1,\lambda_2) \to -i \vec q=(-iq_1,-iq_2)$ 
through values in $\mathbb C_+^2$.
\end{definition}

\begin{definition}\label{def:gfft}
Let $q_1$ and $q_2$ be nonzero real numbers.
For $\vec\lambda=(\lambda_1,\lambda_2) \in {\mathbb C}_+^2$ 
and $(y_1,y_2) \in C_{a,b}^2[0,T]$, 
let 
\[
T_{\vec\lambda}(F)(y_1,y_2)
\equiv T_{(\lambda_1,\lambda_2)}(F)(y_1,y_2)
=E_{\vec x}^{\text{\rm  an}_{\vec\lambda}}[F(y_1+x_1 , y_2+x_2)].          
\]
For $p\in (1,2]$, we define the $L_p$ analytic GFFT, 
$T_{\vec q}^{ (p)}(F)$ of $F$, by the formula  $(\vec \lambda \in {\mathbb C}_+^2)$
\[
 T_{\vec q}^{ (p)}(F) (y_1,y_2)
 \equiv T_{(q_1,q_2)}^{ (p)}(F) (y_1,y_2)
 =\operatorname*{l.i.m.}\limits_{\vec\lambda\to -i\vec q }
 T_{\vec\lambda} (F)(y_1,y_2)    	             
\]
if it exists; i.e.,  for each $\rho_1 >0$ and $\rho_2 >0$,
\[
\lim_{\vec\lambda\to -i\vec q }
 \int_{C_{a,b}^2[0,T]} 
 \big| T_{\vec\lambda} (F)(\rho_1  y_1 , \rho_2 y_2)
  -T_{\vec q}^{(p)}(F)(\rho_1  y_1 , \rho_2 y_2) \big|^{p'} 
  d(\mu\times \mu)(y_1,y_2)=0
\]
where $1/p +1/p' =1$. 
We define the $L_1$ analytic GFFT, 
$T_{\vec q}^{ (1)}(F)$ of $F$, by the formula $(\vec\lambda \in \mathbb C_+^2)$
\[
 T_{\vec q}^{ (1)}(F) (y_1,y_2)
 = \lim_{\vec\lambda \to  -i \vec q} T_{\vec\lambda} (F)(y_1 , y_2)  				 
\]
if it exists.
\end{definition}

\par
We note that for $1\le p \le 2$, $T_{\vec q}^{ (p)}(F)$ is defined only  s-a.e..
We also note that if $T_{\vec q}^{ (p)}(F)$ exists and if $F\approx G$, 
then $T_{\vec q}^{ (p)}(G)$ exists and 
$ T_{\vec q}^{ (p)}(G) \approx  T_{\vec q}^{ (p)}(F)$.
Moreover, from Definition \ref{def:gfft}, 
we see that for $q_1, q_2\in \mathbb R-\{0\}$,
\begin{equation}\label{eq:t1q0}
E_{\vec x}^{\text{\rm anf}_{\vec q}}[F(x_1,x_2)]=T_{\vec q}^{(1)}(F)(0,0).
\end{equation}

\par
Next we give the definition of the generalized Fresnel type class 
$\mathcal{F}_{A_1,A_2}^{\,\,a,b}$.

\begin{definition}\label{def:class}
Let $A_1$ and $A_2$ be bounded, nonnegative self-adjoint operators 
on $C_{a,b}'[0,T]$.
The generalized Fresnel type class  $\mathcal{F}_{A_1,A_2}^{\,\,a,b}$ 
of functionals on $C_{a,b}^2[0,T]$ 
is defined as the space of all functionals $F$ 
on $C_{a,b}^2[0,T]$ of the form
\begin{equation}\label{eq:element2}
F(x_1,x_2)=\int_{C_{a,b}'[0,T]} 
  \exp\bigg\{\sum\limits_{j=1}^2i(A_j^{1/2}w,x_j)^{\sim}\bigg\} d f (w)      
\end{equation}
for some  $f\in \mathcal M(C_{a,b}'[0,T])$. 
More precisely, since we identify functionals 
which coincide s-a.e. on $C_{a,b}^2[0,T]$,
$\mathcal{F}_{A_1,A_2}^{\,\,a,b}$ can be regarded as 
the  space of all s-equivalence classes 
of functionals of the form \eqref{eq:element2}.
\end{definition}

\begin{remark}\label{re:A}
{\rm (1)} 
In Definition \ref{def:class} above, 
let $A_1$  be the identity operator on $C_{a,b}'[0,T]$
and $A_2 \equiv 0$. 
Then $\mathcal{F}_{A_1,A_2}^{\,\,a,b}$ is 
essentially the Fresnel type class  $\mathcal F(C_{a,b}[0,T])$ 
and for $p\in[1,2]$ and nonzero real numbers $q_1$ and $q_2$,
\[
T_{(q_1,q_2)}^{(p)}(F)(y_1,y_2)=T_{q_1}^{(p)} (F_0)(y_1),
\]
if it exists, where $F_0(x_1)=F(x_1,x_2)$ 
for all $(x_1,x_2)\in C_{a,b}^2[0,T]$ 
and $T_{q_1}^{(p)}(F_0)(y)$ means the $L_p$  analytic GFFT 
on $C_{a,b}[0,T]$, see \cite{CS03, CCS03}.

{\rm (2)}  
The map $f \mapsto F$ defined by \eqref{eq:element2} 
sets up an algebra isomorphism 
between $\mathcal M(C_{a,b}'[0,T])$ and  $\mathcal F_{A_1,A_2}^{\,\,a,b}$ 
if $\text{\rm Ran}(A_1+A_2)$ is dense in $C_{a,b}'[0,T]$  
where $\text{\rm Ran}$  indicates the range of an operator.
In this case $\mathcal{F}_{A_1,A_2}^{\,\,a,b}$ becomes a Banach algebra 
under the norm $\|F\|=\|f\|$. 
For more details,   see  \cite{KB84}. 
\end{remark}

\begin{remark}\label{re:effect-a}
Let $F$ be given by equation \eqref{eq:element2}.
In evaluating $E_{\vec x}[F(\lambda_1^{-1/2}x_1,\lambda_2^{-1/2} x_2)]$ 
and  $T_{(\lambda_1,\lambda_2)}(F)(y_1,y_2)
  =E_{\vec x}[F(y_1+\lambda_1^{-1/2}x_1,y_2+\lambda_2^{-1/2} x_2)]$ 
for $\lambda_1>0$ and $\lambda_2>0$,
the expression
\begin{equation}\label{eq:h-function}
\begin{split}
\psi(\vec\lambda;\vec A;w)
&\equiv \psi(\lambda_1,\lambda_2;A_1,A_2;w)\\
&=\exp\bigg\{\sum_{j=1}^2\bigg[-\frac{(A_jw,w)_{C_{a,b}'}}{2\lambda_j} 
+i\lambda_j^{-1/2}(A_j^{1/2}w,a)_{C_{a,b}'}\bigg]\bigg\}
\end{split}
\end{equation}
occurs.
Clearly, for $\lambda_j>0$, $j\in\{1,2\}$, 
$|\psi(\vec\lambda;\vec A;w)|\le 1$  for all $w\in C_{a,b}'[0,T]$.
But for $\vec\lambda \in \widetilde{\mathbb  C}_+^2$, 
$|\psi(\vec\lambda;\vec A;w)|$ is not necessarily  bounded by $1$.

\par
Note that for each $\lambda_j \in \widetilde{\mathbb  C}_+$ 
with  $\lambda_j =\alpha_j+i\beta_j$, $j\in\{1,2\}$,
\[
\lambda_j^{1/2}
= \sqrt{\tfrac{\sqrt{\alpha_j^2 + \beta_j^2} +\alpha_j}{2}} 
  + i \frac{\beta_j}{|\beta_j|}
     \sqrt{\tfrac{\sqrt{\alpha_j^2+ \beta_j^2} 
             -\alpha_j}{2}},
\]
\[
\text{\rm  Re}(\lambda_j^{1/2})
 \ge |\text{\rm  Im }(\lambda_j^{1/2})|
 \ge 0,
\]
and
\[
\lambda_j^{-1/2}
= \sqrt{\tfrac{\sqrt{\alpha_j^2 + \beta_j^2}
                        +\alpha_j}{2(\alpha_j^2 + \beta_j^2)}} 
   -i \frac{\beta_j}{|\beta_j|}
     \sqrt{\tfrac{\sqrt{\alpha_j^2+ \beta_j^2} 
          -\alpha_j}{2(\alpha_j^2 + \beta_j^2)}}.
\]
Hence, for $\lambda_j \in  \widetilde{\mathbb  C}_+$ with  
$\lambda_j =\alpha_j+i\beta_j$, $j\in\{1,2\}$,
\begin{equation}\label{eq:vert-psi}
\begin{split}
&|\psi(\vec \lambda;\vec A; w)|\\
&=\bigg|\exp\bigg\{ 
   \sum_{j=1}^2\bigg[-\frac{1}{2\lambda_j}(A_jw,w)_{C_{a,b}'}  
   +i\lambda_j^{-1/2} (A_j^{1/2}w,a)_{C_{a,b}'}\bigg] \bigg\}\bigg|\\
&=\bigg|\exp\bigg\{ 
   \sum_{j=1}^2\bigg[  -\frac{1}{2 } \bigg(\frac{\alpha_j}{\alpha_j^2 + \beta_j^2}
    -i \frac{\beta_j}{\alpha_j^2 + \beta_j^2}\bigg)(A_jw,w)_{C_{a,b}'} \\
& \qquad\qquad
   + i\bigg(\sqrt{\tfrac{\sqrt{\alpha_j^2 + \beta_j^2} 
        +\alpha_j}{2(\alpha_j^2 + \beta_j^2)}} 
   -i \frac{\beta_j}{|\beta_j|}\sqrt{\tfrac{\sqrt{\alpha_j^2 + \beta_j^2} 
    -\alpha_j}{2(\alpha_j^2 + \beta_j^2)}}\bigg)
  (A_j^{1/2}w,a)_{C_{a,b}'}\bigg]\bigg\}\bigg|\\
\end{split}
\end{equation}
\begin{equation*}
\begin{split}
&= \exp\bigg\{  
   \sum_{j=1}^2\bigg[   - \frac{\alpha_j}{2(\alpha_j^2 
     + \beta_j^2)}(A_jw,w)_{C_{a,b}'}\\
&\qquad\qquad\qquad\qquad
   + \frac{\beta_j}{|\beta_j|}
    \sqrt{\tfrac{\sqrt{\alpha_j^2 + \beta_j^2} 
    -\alpha_j}{2(\alpha_j^2 + \beta_j^2)}} 
  (A_j^{1/2}w,a)_{C_{a,b}'}\bigg]\bigg\}.
\end{split}
\end{equation*}

\par
The last expression of \eqref{eq:vert-psi} 
 is an unbounded function
of $w$ for $w\in C_{a,b}'[0,T]$.
Thus $E^{{\rm an}_{\vec  \lambda}}[F]$, $E^{{\rm anf}_{\vec q}}[F]$,
$T_{\vec \lambda}(F)$ and $T_{\vec q}^{(p)}(F)$
might not exist.
Throughout this paper
we thus will need to put additional restrictions
on the complex measure $f$ corresponding to $F$
in order to obtain our results for the GFFT 
and the generalized analytic Feynman integral of $F$.
\end{remark}

\par
In view of Remark \ref{re:effect-a} above, 
we clearly need to impose additional restrictions 
on the functionals $F$ in $\mathcal{F}^{\,\,a,b}_{A_1,A_2}$.

\par
For a positive real number $q_0$, 
let  
\begin{equation}\label{eq:domain}
\begin{split}
\Gamma_{q_0}
&=\bigg\{ \vec\lambda=(\lambda_1,\lambda_2) \in \widetilde{\mathbb C}_+^2  
    \,\bigg|\, \lambda_j=\alpha_j+i \beta_j ,\\
&\qquad\qquad\qquad
|\text{\rm  Im}(\lambda_j^{-1/2})| 
   = \sqrt{\tfrac{\sqrt{\alpha_j^2+\beta_j^2}
     -\alpha_j}{2 (\alpha_j^2+\beta_j^2)}} 
< \frac{1}{\sqrt{2q_0}},\,j=1,2 \bigg\}
\end{split}
\end{equation}
and  let
\begin{equation}\label{eq:Domi-1}
\begin{split}
k(q_0;\vec A;w)
&\equiv  k(q_0;A_1,A_2;w)\\
&=\exp\bigg\{  \sum_{j=1}^2(2q_0)^{-1/2} 
 \|A_j^{1/2}\|_{o}\|w\|_{C_{a,b}'}\|a\|_{C_{a,b}'}   \bigg\}
\end{split}
\end{equation}
where $\|A_j^{1/2}\|_o$ means the operator norm of $A_j^{1/2}$ 
for $j\in\{1,2\}$.
Then for all 
$\vec\lambda=(\lambda_1,\lambda_2) \in \Gamma_{q_0}$,
\begin{equation}\label{eq:inequality100}
\begin{split}
|\psi(\vec\lambda;\vec A;w)|
&= 
  \exp\bigg\{  \sum_{j=1}^2\bigg[   
    - \frac{\alpha_j}{2(\alpha_j^2 + \beta_j^2)}\|A_j^{1/2}w\|_{C_{a,b}'}^2\\
& \qquad \qquad\qquad
+ \frac{\beta_j}{|\beta_j|}\sqrt{\tfrac{\sqrt{\alpha_j^2 
       + \beta_j^2} -\alpha_j}{2(\alpha_j^2 + \beta_j^2)}} 
(A_j^{1/2}w,a)_{C_{a,b}'}\bigg]\bigg\}\\
& \le 
 \exp\bigg\{  \sum_{j=1}^2 \sqrt{\tfrac{\sqrt{\alpha_j^2 
    + \beta_j^2} -\alpha_j}{2(\alpha_j^2 + \beta_j^2)}}
|(A_j^{1/2}w,a)_{C_{a,b}'}|    \bigg\}\\
& \le
\exp\bigg\{  \sum_{j=1}^2 \sqrt{\tfrac{\sqrt{\alpha_j^2 
    + \beta_j^2} -\alpha_j}{2(\alpha_j^2 + \beta_j^2)}}
\| A_j^{1/2}w\|_{C_{a,b}'} \|a\|_{C_{a,b}'}  \bigg\}\\
&< k(q_0;\vec A;w).
\end{split}
\end{equation}

\par
We note that for all real $q_j$ with $|q_j|>q_0$, $j\in\{1,2\}$,
\[
(-iq_j)^{-1/2}=\frac{1}{\sqrt{|2q_j|}}
   + \hbox{\rm sign}(q_j)\frac{i}{\sqrt{|2q_j|}}
\]
and $(-iq_1,-iq_2)\in \Gamma_{q_0}$.

\par 
For the existence of the GFFT of $F$,
we  define a subclass $\mathcal{F}_{A_1,A_2}^{\, \,q_0}$
of $\mathcal{F}_{A_1,A_2}^{\,\,a,b}$ by
$F\in \mathcal{F}_{A_1,A_2}^{\,\, q_0}$  if and only if 
\begin{equation}\label{eq:finite000}
\int_{C_{a,b}'[0,T]}k(q_0;\vec A;w)d|f|(w)<+\infty   
\end{equation}
where $f$ and $F$ are  related by equation \eqref{eq:element2}
and $k$ is given by equation \eqref{eq:Domi-1}.

\begin{remark}
Note that in case $a(t)\equiv 0$ and $b(t)=t$ on $[0,T]$,
the function space $C_{a,b}[0,T]$ reduces   to 
the classical Wiener space $C_0[0,T]$
and $(w,a)_{C_{a,b}'}=0$ for all $w \in C_{a,b}'[0,T]=C_0'[0,T]$. 
Hence for all $\vec\lambda\in \widetilde{\mathbb C}_+^2$, 
$|\psi(\vec\lambda;\vec A;w)| \le 1$
and for any positive real number $q_0$, 
$\mathcal{F}_{A_1,A_2}^{\,\,q_0}=\mathcal{F}_{A_1,A_2}$,
the Kallianpur and Bromley's class
introduced in Section \ref{sec:1}.
\end{remark}

\begin{theorem}\label{thm:t1q}
Let $q_0$ be a positive real number  and 
let $F$ be an element of $\mathcal F_{A_1, A_2}^{\,\,q_0}$.
Then for any nonzero real numbers $q_1$ and $q_2$ 
with $|q_j|> q_0 $, $j\in\{1,2\}$, 
the  $L_1$ analytic GFFT of $F$, $T_{\vec q}^{(1)}(F)$ 
exists and  is given by  the formula
\begin{equation}\label{eq:tpq}
\begin{split}
&T_{\vec q}^{(1)} (F)(y_1,y_2) \\
&=\int_{C_{a,b}'[0,T]}
\exp\bigg\{ \sum\limits_{j=1}^2i(A_j^{1/2} w,y_j)^{\sim} \bigg\}
 \psi(-i\vec q;\vec A;w) d f( w)\\
\end{split}
\end{equation}
for s-a.e. $(y_1,y_2) \in C_{a,b}^2[0,T]$, 
where $\psi$ is  given by equation \eqref{eq:h-function}.
\end{theorem}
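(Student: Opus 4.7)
The plan is to derive \eqref{eq:tpq} first for real positive $\vec\lambda=(\lambda_1,\lambda_2)$ by direct computation, then extend analytically to $\vec\lambda\in\mathbb{C}_+^2$, and finally pass the Feynman limit $\vec\lambda\to -i\vec q$ inside the integral using the hypothesis \eqref{eq:finite000}. I would start with $\lambda_1,\lambda_2>0$ real and substitute the representation \eqref{eq:element2} of $F$ into $T_{\vec\lambda}(F)(y_1,y_2)=E_{\vec x}[F(y_1+\lambda_1^{-1/2}x_1,y_2+\lambda_2^{-1/2}x_2)]$. Since $\|F\|=\|f\|<\infty$, Fubini's theorem permits interchanging the Gaussian expectation with the $f$-integral. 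Pulling the $y_j$-factor $\exp\{\sum_j i(A_j^{1/2}w,y_j)^\sim\}$ outside and using the independence of $x_1,x_2$ together with the fact that $(A_j^{1/2}w,x_j)^\sim$ is Gaussian with mean $(A_j^{1/2}w,a)_{C_{a,b}'}$ and variance $(A_jw,w)_{C_{a,b}'}=\|A_j^{1/2}w\|_{C_{a,b}'}^2$, a standard complex Gaussian moment-generating-function calculation (equivalently, \eqref{eq:int-formula}) produces exactly the factor $\psi(\vec\lambda;\vec A;w)$ of \eqref{eq:h-function}. This proves \eqref{eq:tpq} on the real positive diagonal with $-i\vec q$ replaced by $\vec\lambda$.

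Next, I would extend this identity to all of $\mathbb{C}_+^2$ via analytic continuation. For each fixed $w$, the integrand $\exp\{\sum_j i(A_j^{1/2}w,y_j)^\sim\}\,\psi(\vec\lambda;\vec A;w)$ is analytic in $\vec\lambda$ on $\mathbb{C}_+^2$. The key estimate is a $w$-independent bound on $|\psi(\vec\lambda;\vec A;w)|$: starting from \eqref{eq:vert-psi}, using $(A_jw,w)_{C_{a,b}'}=\|A_j^{1/2}w\|_{C_{a,b}'}^2$ together with the Cauchy-Schwarz estimate $(A_j^{1/2}w,a)_{C_{a,b}'}\le\|A_j^{1/2}w\|_{C_{a,b}'}\|a\|_{C_{a,b}'}$, and then completing the square in $\|A_j^{1/2}w\|_{C_{a,b}'}$, one obtains
\[
|\psi(\vec\lambda;\vec A;w)|^{2}\le \exp\bigg\{\sum_{j=1}^{2}\frac{\big(\sqrt{\alpha_j^{2}+\beta_j^{2}}-\alpha_j\big)\|a\|_{C_{a,b}'}^{2}}{2\alpha_j}\bigg\},
\]
which is bounded uniformly in $w\in C_{a,b}'[0,T]$ on every compact subset of $\mathbb{C}_+^2$. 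Since $|f|$ is finite, Morera's theorem combined with Fubini shows that the integral in \eqref{eq:tpq} with $\vec\lambda$ in place of $-i\vec q$ defines an analytic function of $\vec\lambda$ on $\mathbb{C}_+^2$. By uniqueness of analytic continuation and Step 1, this integral coincides with $T_{\vec\lambda}(F)(y_1,y_2)$ throughout $\mathbb{C}_+^2$.

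Finally, since $|q_j|>q_0$ we have $(-iq_1,-iq_2)$ lying in the interior of $\Gamma_{q_0}$, so I would choose any path $\vec\lambda\to -i\vec q$ remaining in $\mathbb{C}_+^2\cap\Gamma_{q_0}$. On this path the estimate \eqref{eq:inequality100} gives $|\psi(\vec\lambda;\vec A;w)|\le k(q_0;\vec A;w)$, and by \eqref{eq:finite000} the dominator $k(q_0;\vec A;\cdot)$ is $|f|$-integrable. Dominated convergence then passes the limit inside the integral, and the continuity of $\psi$ in $\vec\lambda$ produces \eqref{eq:tpq} for s-a.e.\ $(y_1,y_2)\in C_{a,b}^2[0,T]$. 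The step I expect to require the most care is the $w$-uniform bound on compact subsets of $\mathbb{C}_+^2$ used to establish analyticity: this is what allows the analytic continuation to go through \emph{without} already invoking \eqref{eq:finite000}, so that the hypothesis can be reserved precisely for the boundary limit where the bound from \eqref{eq:vert-psi} degenerates as $\alpha_j\downarrow 0$. The ``s-a.e.'' qualification arises from the fact that $(A_j^{1/2}w,y_j)^\sim$ is defined only for $y_j$ in a full-measure set depending on $w$, and is handled by a standard Fubini-null-set argument using that $f$ is a finite Borel measure on $C_{a,b}'[0,T]$.
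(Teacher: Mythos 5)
Your proof is correct and follows the same skeleton as the paper's: evaluate $T_{\vec\lambda}(F)$ for real $\lambda_1,\lambda_2>0$ by the Gaussian computation resting on \eqref{eq:int-formula}, establish analyticity in $\vec\lambda$ via Fubini and Morera, and pass to the limit $\vec\lambda\to-i\vec q$ by dominated convergence with the dominator $k(q_0;\vec A;\cdot)$ supplied by \eqref{eq:finite000}. The one place you genuinely diverge is the analyticity step. The paper carries this out only on $\text{\rm Int}(\Gamma_{q_0})$, using the $|f|$-integrable dominator $k(q_0;\vec A;w)$ there, so hypothesis \eqref{eq:finite000} is already spent at that stage. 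You instead complete the square in $t=\|A_j^{1/2}w\|_{C_{a,b}'}$ in the last line of \eqref{eq:vert-psi}, obtaining the $w$-independent bound $|\psi(\vec\lambda;\vec A;w)|\le\exp\big\{\sum_{j=1}^{2}\big(\sqrt{\alpha_j^2+\beta_j^2}-\alpha_j\big)\|a\|_{C_{a,b}'}^{2}/(4\alpha_j)\big\}$ (the square root of your stated estimate), which is locally uniform on $\mathbb C_+^2$ and needs only $\|f\|<\infty$. This buys two things: analyticity of $T_{\vec\lambda}(F)$ on all of $\mathbb C_+^2$, which is what Definition \ref{def:gfi} literally demands (the paper verifies it only on $\text{\rm Int}(\Gamma_{q_0})$, and $\mathbb C_+^2\not\subset\Gamma_{q_0}$), and a cleaner separation of roles in which \eqref{eq:finite000} is reserved for the boundary limit, where your bound degenerates as $\alpha_j\downarrow0$. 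Your remaining points --- that any approach $\vec\lambda\to-i\vec q$ through $\mathbb C_+^2$ eventually lies in $\Gamma_{q_0}$ because $|q_j|>q_0$, and that the s-a.e.\ qualification comes from the $w$-dependent null sets on which $(A_j^{1/2}w,y_j)^{\sim}$ fails to exist --- are exactly what the paper handles in Remark \ref{re:domine} or leaves implicit, so no gap remains.
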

\begin{proof}
We first note that for $j\in\{1,2\}$,
the PWZ stochastic integral $(A_j^{1/2}w,x)^{\sim}$ is a Gaussian 
random variable with mean $(A_j^{1/2}w,a)_{C_{a,b}'}$ and 
variance $ \|A_j^{1/2}w\|_{C_{a,b}'}^2=(A_jw,w)_{C_{a,b}'}$.
Hence, using equation \eqref{eq:element2},  the Fubini theorem,  
the change of variables theorem  and equation \eqref{eq:int-formula}, 
we have that for all $\lambda_1>0$ and $\lambda_2>0$,
\[
\begin{split}
&J(y_1,y_2;\lambda_1, \lambda_2)\\
&\equiv E_{\vec x}[F(y_1+\lambda_1^{-1/2}x_1,
      y_2+\lambda_2^{-1/2}x_2)]\\
&= \int_{C_{a,b}'[0,T]}
   \exp\bigg\{\sum_{j=1}^2i(A_{j}^{1/2}w,y_j)^{\sim}\bigg\}  
   \bigg(\prod_{j=1}^2 E_{x_j}\big[ \exp  \big\{i\lambda_j^{-1/2} 
   (A_j^{1/2}w,x_j)^{\sim}   \big\} \big]\bigg)df(w) \\
&= \int_{C_{a,b}'[0,T]}  
    \exp\bigg\{\sum_{j=1}^2i(A_{j}^{1/2}w,y_j)^{\sim}\bigg\} 
    \bigg[\prod_{j=1}^2 \Big(  2\pi (A_jw,w)_{C_{a,b}'} \Big)^{-1/2}\\
&\qquad\qquad\qquad\qquad\times 
\int_{\mathbb R} 
   \exp \bigg\{i\lambda_j^{-1/2}u_j  
   -\frac{ [u_j-(A_j^{1/2}w,a)_{C_{a,b}'} ]^2 }{2 (A_jw,w)_{C_{a,b}'} }
      \bigg\} du_j\bigg] df(w)  \\
&=\int_{C_{a,b}'[0,T]} 
   \exp\bigg\{\sum_{j=1}^2i(A_{j}^{1/2}w,y_j)^{\sim}\bigg\} \\
&\quad\times
   \bigg[\prod_{j=1}^2\exp  \bigg\{ \frac{(A_jw,w)_{C_{a,b}'}}{2}   
   \bigg[i\lambda_j^{-1/2}    
    +\frac{(A_j^{1/2}w,a)_{C_{a,b}'}}{(A_jw,w)_{C_{a,b}'}}  \bigg]^2     
    -\frac{(A_j^{1/2}w,a)_{C_{a,b}'}^2}{2(A_j w,w)_{C_{a,b}'}}      
    \bigg\}\bigg]df(w)  \\
\end{split}
\]
\[
\begin{split}
&=\int_{C_{a,b}'[0,T]}
   \exp\bigg\{\sum_{j=1}^2i(A_{j}^{1/2}w,y_j)^{\sim }\bigg\}
\psi(\vec \lambda;\vec A;w)df(w).
\end{split}
\]
Let
\begin{equation}\label{eq:lambdastar}
T_{\vec\lambda}(F)(y_1,y_2) 
  =  \int_{C_{a,b}'[0,T]}
   \exp\bigg\{\sum_{j=1}^2i(A_{j}^{1/2}w,y_j)^{\sim }\bigg\}
   \psi(\vec \lambda;\vec A;w)df(w)  
\end{equation}
for each $\vec\lambda \in  \mathbb C_+^2$.
Clearly,  
\[
T_{\vec\lambda}(F)(y_1,y_2)
=J(y_1,y_2;\lambda_1, \lambda_2)
\] 
for all $\lambda_1 > 0$ and $\lambda_2 > 0$.
Let $\Gamma_{q_0}$ be given by equation \eqref{eq:domain}.
Then for all $\vec\lambda \in \hbox{\rm Int}(\Gamma_{q_0})$, 
\[
|T_{\vec\lambda}(F)(y_1,y_2)|
<\int_{C_{a,b}'[0,T]}k(q_0;\vec A;w)d|f|(w) < +\infty.
\]
Using this fact and the dominated convergence theorem, 
we see that $T_{\vec\lambda}(F)(y_1,y_2)$ is a continuous function 
of $\vec\lambda=(\lambda_1,\lambda_2)$ on $\hbox{\rm Int}(\Gamma_{q_0})$.
For each $w\in C_{a,b}'[0,T]$, 
$\psi(\vec\lambda;\vec A;w)$
is an analytic function of $\vec\lambda$ 
throughout the domain $\text{\rm Int} (\Gamma_{q_0})$
so that $\int_{\Delta}\psi(\vec\lambda;\vec A;w)d\vec \lambda=0$ 
for every rectifiable simple closed curve $\Delta$ 
in $\text{\rm Int}(\Gamma_{q_0})$.
By equation \eqref{eq:lambdastar}, 
the Fubini theorem and the Morera theorem, 
we see that  $T_{\vec\lambda}(F)(y_1,y_2)$ 
is an analytic function of $\vec\lambda$
throughout the domain $\text{\rm Int}(\Gamma_{q_0})$.
Finally, by the dominated convergence theorem,
it follows that 
\[
\begin{split}
T_{\vec q}^{(1)}(F)(y_1,y_2)
&=\lim_{\vec\lambda \to -i\vec q}T_{\vec \lambda}(F)(y_1,y_2) \\
& =  \int_{C_{a,b}'[0,T]}\lim_{\vec \lambda \to -i\vec q}  
\exp \bigg\{ \sum_{j=1}^2i(A_j^{1/2}w,y_j)^{\sim} \bigg\}
   \psi(\vec\lambda;\vec A;w)  df(w)\\
& = \int_{C_{a,b}'[0,T]} \exp \bigg\{  
   \sum_{j=1}^2i(A_j^{1/2}w,y_j)^{\sim}\bigg\}
\psi(-i\vec q;\vec A;w) df(w).
\end{split}
\]
\end{proof}

\begin{theorem}\label{thm:tpq}
Let $q_0$ and $F$  be as in Theorem \ref{thm:t1q}.
Then for all $p\in(1,2]$ and all nonzero real numbers 
$q_1$ and $q_2$ with $|q_j|> q_0$, $j\in\{1,2\}$,
the  $L_p$ analytic GFFT of $F$, 
$T_{\vec q}^{(p)}(F)$ exists   and  is given 
by the right hand side of  equation \eqref{eq:tpq}
for s-a.e. $(y_1,y_2) \in C_{a,b}^2[0,T]$. 
\end{theorem}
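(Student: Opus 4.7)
My plan is to deduce this theorem from Theorem \ref{thm:t1q} by upgrading the pointwise convergence already established there to $L_{p'}$ convergence on the probability space $C_{a,b}^2[0,T]$. Let $G(y_1,y_2)$ denote the right-hand side of equation \eqref{eq:tpq}.

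First, I would recall from the proof of Theorem \ref{thm:t1q} that for $\vec\lambda \in \mathrm{Int}(\Gamma_{q_0})$ the integral representation \eqref{eq:lambdastar} is valid, together with the uniform bound supplied by \eqref{eq:inequality100}:
\[
|T_{\vec\lambda}(F)(y_1,y_2)|
\le \int_{C_{a,b}'[0,T]} |\psi(\vec\lambda;\vec A;w)| d|f|(w)
\le \int_{C_{a,b}'[0,T]} k(q_0;\vec A;w) d|f|(w) =: M < \infty.
\]
Since each factor $\exp\{i(A_j^{1/2}w,\rho_j y_j)^{\sim}\}$ has modulus one for every $\rho_j>0$ and every $y_j \in C_{a,b}[0,T]$, the same constant $M$ also dominates $|T_{\vec\lambda}(F)(\rho_1 y_1,\rho_2 y_2)|$. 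Taking $\vec\lambda=-i\vec q \in \mathrm{Int}(\Gamma_{q_0})$, which is ensured by the hypothesis $|q_j|>q_0$ together with the observation immediately preceding \eqref{eq:finite000}, we likewise obtain $|G(\rho_1 y_1,\rho_2 y_2)|\le M$ for all $(y_1,y_2)\in C_{a,b}^2[0,T]$.

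Second, by the final dominated-convergence step in the proof of Theorem \ref{thm:t1q}, for each fixed $\rho_1,\rho_2 > 0$ we have
\[
T_{\vec\lambda}(F)(\rho_1 y_1,\rho_2 y_2) \longrightarrow G(\rho_1 y_1,\rho_2 y_2)
\]
for $(\mu\times\mu)$-a.e.\ $(y_1,y_2)\in C_{a,b}^2[0,T]$ as $\vec\lambda\to -i\vec q$ through $\mathbb C_+^2$. Combining this with the uniform bound gives
\[
|T_{\vec\lambda}(F)(\rho_1 y_1,\rho_2 y_2) - G(\rho_1 y_1,\rho_2 y_2)|^{p'} \le (2M)^{p'}
\]
for every $\vec\lambda$ in a neighborhood of $-i\vec q$ inside $\mathrm{Int}(\Gamma_{q_0})$. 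Because $\mu\times\mu$ is a probability measure, the constant $(2M)^{p'}$ is $(\mu\times\mu)$-integrable, so the bounded convergence theorem yields
\[
\lim_{\vec\lambda\to -i\vec q}\int_{C_{a,b}^2[0,T]}
\bigl| T_{\vec\lambda}(F)(\rho_1 y_1,\rho_2 y_2) - G(\rho_1 y_1,\rho_2 y_2)\bigr|^{p'} d(\mu\times\mu)(y_1,y_2) = 0,
\]
which is exactly the definition of $T_{\vec q}^{(p)}(F)=G$ in the $L_p$ sense.

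I do not foresee a genuine obstacle here; the theorem is essentially a corollary of Theorem \ref{thm:t1q} together with the finiteness of $\mu\times\mu$. The only subtlety worth noting is that the hypothesis $F\in\mathcal F_{A_1,A_2}^{\,\,q_0}$ built into \eqref{eq:finite000} is precisely what is needed to produce a \textit{constant} (hence $L^{p'}$-integrable on a probability space) dominating function uniformly in $\vec\lambda$ and in the scaling parameters $\rho_1,\rho_2$; without this uniform envelope, the passage from $L_1$ to $L_p$ would require a more delicate Hausdorff--Young type estimate.
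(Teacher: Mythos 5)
Your proposal is correct and follows essentially the same route as the paper: both bound $|T_{\vec\lambda}(F)(\rho_1 y_1,\rho_2 y_2)-T_{\vec q}^{(p)}(F)(\rho_1 y_1,\rho_2 y_2)|^{p'}$ by the constant $\bigl(2\int_{C_{a,b}'[0,T]}k(q_0;\vec A;w)\,d|f|(w)\bigr)^{p'}$ via \eqref{eq:inequality100} and \eqref{eq:finite000}, and then invoke dominated (bounded) convergence over the probability measure $\mu\times\mu$. The only cosmetic slip is calling $-i\vec q$ a point of $\mathrm{Int}(\Gamma_{q_0})$ (it lies on the boundary of $\mathbb C_+^2$), but the bound you need, $|\psi(-i\vec q;\vec A;w)|\le k(q_0;\vec A;w)$, holds because $-i\vec q\in\Gamma_{q_0}$ and \eqref{eq:inequality100} is valid on all of $\Gamma_{q_0}$.
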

\begin{proof}
Let $\Gamma_{q_0}$ be given by equation \eqref{eq:domain}.
It was shown in the proof of Theorem \ref{thm:t1q}
that $T_{\vec\lambda}(F)(y_1,y_2)$ is an analytic function 
of $\vec\lambda$ throughout the domain $\text{\rm Int}(\Gamma_{q_0})$.
In view of Definition \ref{def:gfft}, 
it will  suffice  to show that for each $\rho_1>0$ and $\rho_2>0$,
\[
\lim\limits_{\vec\lambda \to -i\vec q}
\int_{C_{a,b}[0,T]}\big|T_{\vec\lambda}(F)(\rho_1 y_1,\rho_2y_2)
   -T_{\vec q}^{(p)}(F)(\rho_1 y_1,\rho_2y_2) \big|^{p'} 
   d (\mu\times\mu)(y_1,y_2)
=0.
\] 

\par
Fixing $p\in (1,2]$ and using the inequalities  
\eqref{eq:inequality100}  and \eqref{eq:finite000}, 
we obtain that   for all $\rho_j>0$, $j\in\{1,2\}$ 
and  all $\vec\lambda \in \Gamma_{q_0}$,
\[
\begin{split}
&\big|T_{\vec\lambda}(F)(\rho_1 y_1,\rho_2 y_2)
  -T_{\vec q}^{(p)}(F)(\rho_1 y_1,\rho_2 y_2) \big|^{p'}\\
&\le \bigg|\int_{C_{a,b}'[0,T]}  \exp
   \bigg\{ \sum_{j=1}^2i\rho_j(A_{j}^{1/2}w, y_j)^{\sim}\bigg\}
 \Big[ \psi(\vec\lambda;\vec A;w) -\psi(-i\vec q;\vec A;w) \Big]
d f(w)\bigg|^{p'}\\
&\le \Bigg(\int_{C_{a,b}'[0,T]}\Big[\big|\psi(\vec\lambda;\vec A;w)\big|  
   +\big|\psi(-i\vec q;\vec A;w)\big| \Big]d |f|(w)\Bigg)^{p'}\\
&\le\Bigg(2\int_{C_{a,b}'[0,T]} 
   k(q_0;\vec A;w)  d  |f| (w)\Bigg)^{p'}<+\infty .
\end{split}
\]
Hence by the dominated convergence theorem,  
we see that for each $p\in (1,2]$ and each  $\rho_1 >0$ and $\rho_2>0$,  
\[
\begin{split}
&\lim\limits_{\vec\lambda \to -i\vec q}
   \int_{C_{a,b}[0,T]}\big|T_{\vec\lambda}(F)(\rho_1 y_1,\rho_2 y_2)-
     T_{\vec q}^{(p)}(F)(\rho_1 y_1,\rho_2 y_2) \big|^{p'} 
     d (\mu\times\mu)(y_1,y_2)\\
&=\lim\limits_{\vec\lambda \to -i\vec q}
  \int_{C_{a,b}^2[0,T]}\bigg| 
  \int_{C_{a,b}'[0,T]} \exp\bigg\{\sum_{j=1}^2i(A_j^{1/2}w, \rho_j y_j)^{\sim} \bigg\}
  \psi(\vec\lambda;\vec A;w)  d f(w)\\
&\,\,\,   
 -\int_{C_{a,b}'[0,T]} \exp\bigg\{\sum_{j=1}^2i(A_j^{1/2}w, \rho_j y_j)^{\sim} \bigg\}
    \psi(-i\vec q;\vec A; w)  d f(w) \bigg|^{p'} d (\mu\times\mu)(y_1,y_2)\\
&= \int_{C_{a,b}[0,T]}\bigg| 
 \int_{C_{a,b}'[0,T]} \exp\bigg\{\sum_{j=1}^2i(A_j^{1/2}w, \rho_j y_j)^{\sim} \bigg\}\\
&\qquad\qquad\times
 \lim\limits_{\vec\lambda \to -i\vec q}
  \Big[\psi(\vec\lambda;\vec A;w) - \psi(-i\vec q;\vec A;w) \Big]  d f(w) \bigg|^{p'} 
  d (\mu\times\mu)(y_1,y_2)\\
&=0
 \end{split}
\]
which concludes the proof of Theorem \ref{thm:tpq}.
\end{proof}

\begin{remark}\label{re:remark}
{\rm (1)} 
In view of Theorems \ref{thm:t1q} and \ref{thm:tpq}, 
we see that for each $p\in[1,2]$, 
the $L_p$   analytic GFFT  of $F$, $T_{\vec q}^{(p)}(F)$ 
is given by the right hand side of  equation  \eqref{eq:tpq}
for $q_0$, $q_1$, $q_2$ and $F$ as in Theorem \ref{thm:t1q}, 
and   for s-a.e.   $(y_1,y_2)\in C_{a,b}^2[0,T]$,
\[
T_{\vec q}^{(p)} (F)(y_1,y_2)\\
=E_{\vec x}^{\text{\rm anf}_{\vec q}}
  [F(y_1+x_1, y_2+x_2)], \qquad  p\in[1,2].
\]
In particular, using this fact and  equation  \eqref{eq:t1q0}, 
we have that for all $p\in [1,2]$
\[
T_{\vec q}^{(p)}(F)(0,0)
=E_{\vec x}^{\text{\rm  anf}_{\vec q}}[F(x_1,x_2)].
\]

{\rm (2)} 
For nonzero real numbers $q_1$ and $q_2$ 
with $|q_j|>q_0$, $j\in\{1,2\}$,  define a set function
$f_{\vec q}^{\vec A}: \mathcal{B}(C_{a,b}'[0,T])\to \mathbb C$ 
by
\[
f_{\vec q}^{\vec A}(B)
=\int_{B}\psi(-i\vec q;\vec A;w)d f(w),
   \qquad B\in \mathcal{B}(C_{a,b}'[0,T]),
\]
where $f$ and $F$ are related by equation \eqref{eq:element2} 
and  $\mathcal{B}(C_{a,b}'[0,T])$ is the Borel $\sigma$-algebra 
of $C_{a,b}'[0,T]$.
Then it is obvious that $f_{\vec q}^{\vec A} $ 
belongs to $\mathcal{M}(C_{a,b}'[0,T])$ and for all $p\in [1,2]$, 
$T_{\vec q}^{(p)}(F)$  can be expressed as 
\[
T_{\vec q}^{(p)}(F)(y_1,y_2)
=\int_{C_{a,b}'[0,T]}
\exp\bigg\{\sum_{j=1}^2 i(A_j^{1/2}w,y_j)^{\sim} \bigg\}
  df_{\vec q}^{\vec A} (w)
\]
for s-a.e. $(y_1,y_2)\in C_{a,b}^2[0,T]$.
Hence $T_{\vec q}^{(p)}(F)$ belongs to 
$\mathcal{F}_{A_1.A_2}^{\,\,a,b}$ for all $p\in [1,2]$.

{\rm (3)} 
Let $A$ be a bounded self-adjoint operator on $C_{a,b}'[0,T]$.
Then $A$ has the form \eqref{eq:decomposition}.
Take $A_1=A_+$ and $A_2=A_-$ in Definition \ref{def:class} above. 
Then for $F\in \mathcal F_{A_+,A_-}^{\,\,q_0}$ 
and for real $q$ with $|q|> q_0$,
equations \eqref{eq:t1q0} and   \eqref{eq:tpq} with $\vec q=(q_1, q_2)$ 
replaced with $\vec q=(q,-q)$ becomes
\[
\begin{split}
&E_{\vec x}^{\text{\rm  anf}_{(q,-q)}}[F(x_1,x_2)]
=T_{(q,-q)}^{(p)} (F)(0,0)  \\
& =\int_{C_{a,b}'[0,T]}
\exp\bigg\{  
   -\frac{i}{2q} (Aw,w )_{C_{a,b}'}   
     \bigg\} d f_{\vec q}^{\vec A}( w) . 
\end{split}
\]
\end{remark}

\par
The following corollary follows from  
equations \eqref{eq:t1q0} and \eqref{eq:tpq}.

\begin{corollary}\label{coro:t1q-feynman}
Let $q_{0}$ and $F$ be as in Theorem \ref{thm:t1q}.
Then for all real numbers $q_1$ and $q_2$ 
with $|q_j|> q_{0}$, $j\in\{1,2\}$,
the generalized analytic Feynman integral 
$E^{\text{\rm anf}_{\vec q}} [F]$ 
of $F$ exists and is given by the formula
\[
E^{\text{\rm anf}_{\vec q}} [F] 
= \int_{C_{a,b}'[0,T]}\psi(-i\vec q;\vec A;w)df (w)
\]
where $\psi$ is given by equation \eqref{eq:h-function}.
\end{corollary}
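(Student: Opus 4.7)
The plan is to deduce the corollary directly from Theorem \ref{thm:t1q} by combining formula \eqref{eq:tpq} with the identity \eqref{eq:t1q0} that relates the generalized analytic Feynman integral to the $L_1$ analytic GFFT evaluated at the origin.

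First I would invoke Theorem \ref{thm:t1q} to guarantee that, under the hypothesis $|q_j| > q_0$ for $j = 1, 2$, the $L_1$ analytic GFFT $T_{\vec q}^{(1)}(F)$ exists with the integral representation \eqref{eq:tpq}. A crucial observation is that the proof of Theorem \ref{thm:t1q} establishes pointwise convergence $T_{\vec\lambda}(F)(y_1, y_2) \to T_{\vec q}^{(1)}(F)(y_1, y_2)$ via the dominated convergence theorem, so the limit is valid at every specific point $(y_1, y_2)$ for which the PWZ stochastic integrals $(A_j^{1/2} w, y_j)^\sim$ make sense, and not merely s-a.e.

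Next I would specialize to $(y_1, y_2) = (0, 0)$. At the zero path, one has $(A_j^{1/2} w, 0)^\sim = 0$ for every $w \in C_{a,b}'[0,T]$ and $j \in \{1, 2\}$, since the Riemann--Stieltjes limit defining the PWZ integral against $x \equiv 0$ vanishes identically. Hence the exponential factor in \eqref{eq:tpq} collapses to $1$, and the formula reduces to
\[
T_{\vec q}^{(1)}(F)(0, 0) = \int_{C_{a,b}'[0,T]} \psi(-i\vec q; \vec A; w) \, df(w).
\]

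Finally I would apply the identity \eqref{eq:t1q0}, namely $E_{\vec x}^{\text{anf}_{\vec q}}[F(x_1, x_2)] = T_{\vec q}^{(1)}(F)(0, 0)$, which together with the display above yields both the existence of $E^{\text{anf}_{\vec q}}[F]$ and the claimed formula. Since the corollary is essentially a substitution, there is no real obstacle; the only point meriting a moment of care is confirming that evaluating at the distinguished point $(0,0)$ is legitimate, and this is precisely what the pointwise dominated convergence step in the proof of Theorem \ref{thm:t1q} delivers.
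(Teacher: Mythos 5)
Your proposal is correct and follows essentially the same route as the paper, which simply derives the corollary from equations \eqref{eq:t1q0} and \eqref{eq:tpq}; your extra remark that the dominated-convergence argument in the proof of Theorem \ref{thm:t1q} yields pointwise convergence of $T_{\vec\lambda}(F)$ at the specific point $(0,0)$ (where the exponential factor is identically $1$) is a legitimate and welcome clarification of why evaluating an s-a.e.-defined transform at the origin is harmless here.
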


\par
In the proof of Theorem \ref{thm:t1q}, 
we showed that $T_{\vec\lambda}(F)$ is 
an analytic function of $\vec\lambda$
throughout the domain  $\hbox{\rm Int}(\Gamma_{q_0})$.
Thus we have the following corollary.

\begin{corollary}
Let $q_{0}$ and $F$ be as in Theorem \ref{thm:t1q}
and let $\Gamma_{q_0}$ be given by \eqref{eq:domain}.
Then for each $\vec\lambda \in \hbox{\rm Int}(\Gamma_{q_0})$,
\[
E^{\text{\rm an}_{\vec\lambda}}[F] 
 = \int_{C_{a,b}'[0,T]} \psi(\vec\lambda;\vec A ;w)df(w)
\]
where $\psi$ is given by equation \eqref{eq:h-function}.
\end{corollary}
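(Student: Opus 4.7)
The plan is to view this corollary as the $y_1 = y_2 = 0$ specialization of the integral representation established in the proof of Theorem \ref{thm:t1q}. Indeed, from Definitions \ref{def:gfi} and \ref{def:gfft} applied with zero translates, $E^{\text{\rm an}_{\vec\lambda}}[F] = T_{\vec\lambda}(F)(0,0)$ for $\vec\lambda \in \mathbb C_+^2$. Thus I would assert that the formula
\[
\Phi(\vec\lambda) := \int_{C_{a,b}'[0,T]} \psi(\vec\lambda;\vec A;w)\, df(w)
\]
provides the required analytic extension of $J(\lambda_1,\lambda_2) = E_{\vec x}[F(\lambda_1^{-1/2}x_1,\lambda_2^{-1/2}x_2)]$ throughout $\text{\rm Int}(\Gamma_{q_0})$, and then invoke the uniqueness of analytic continuation to identify $\Phi$ with $E^{\text{\rm an}_{\vec\lambda}}[F]$.

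Concretely, I would first observe that for $\lambda_1,\lambda_2>0$ the exact computation carried out in the proof of Theorem \ref{thm:t1q} (applying the Fubini theorem to \eqref{eq:element2} and then the Gaussian integration formula \eqref{eq:int-formula} to the inner PWZ-integral, which is Gaussian with mean $(A_j^{1/2}w,a)_{C_{a,b}'}$ and variance $(A_jw,w)_{C_{a,b}'}$) yields $J(\lambda_1,\lambda_2) = \Phi(\vec\lambda)$. Next I would verify that $\Phi$ is well defined and analytic on $\text{\rm Int}(\Gamma_{q_0})$: the bound \eqref{eq:inequality100}, $|\psi(\vec\lambda;\vec A;w)| < k(q_0;\vec A;w)$, together with the integrability hypothesis \eqref{eq:finite000}, provides a $|f|$-integrable majorant that is uniform in $\vec\lambda \in \Gamma_{q_0}$; this makes $\Phi$ finite, continuous (by dominated convergence), and then analytic by a Morera/Fubini argument---for any rectifiable simple closed curve $\Delta$ in $\text{\rm Int}(\Gamma_{q_0})$, Fubini and the analyticity of $\vec\lambda \mapsto \psi(\vec\lambda;\vec A;w)$ give $\int_\Delta \Phi(\vec\lambda)d\vec\lambda = 0$. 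This is exactly the reasoning already in the proof of Theorem \ref{thm:t1q}, restricted to $(y_1,y_2)=(0,0)$, so no new machinery is required.

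Finally, I would note that $(0,\infty)^2 \subset \text{\rm Int}(\Gamma_{q_0}) \subset \mathbb C_+^2$: the inclusion on the left holds because $\text{\rm Im}(\lambda^{-1/2})=0$ whenever $\lambda>0$, while the inclusion on the right holds because points with $\text{\rm Re}(\lambda_j)=0$ cannot be $\mathbb C^2$-interior points of $\widetilde{\mathbb C}_+^2$. Consequently $\Phi$ and the function $J$ coincide on the open set $(0,\infty)^2$, so by uniqueness of analytic continuation in two variables, $\Phi$ agrees on $\text{\rm Int}(\Gamma_{q_0})$ with the analytic continuation $J^\ast$ from Definition \ref{def:gfi}; that is, $E^{\text{\rm an}_{\vec\lambda}}[F] = \Phi(\vec\lambda)$ throughout $\text{\rm Int}(\Gamma_{q_0})$. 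There is no real obstacle here beyond bookkeeping: all the analytic work has already been done in Theorem \ref{thm:t1q}, and the only point deserving care is confirming that $\text{\rm Int}(\Gamma_{q_0})$ is an open connected subset of $\mathbb C_+^2$ on which the coincidence $\Phi = J^\ast$ propagates from the positive real quadrant.
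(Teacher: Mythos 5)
Your proposal is correct and follows essentially the same route as the paper: the paper derives this corollary by setting $(y_1,y_2)=(0,0)$ in equation \eqref{eq:lambdastar} and appealing to the analyticity of $T_{\vec\lambda}(F)$ on $\hbox{\rm Int}(\Gamma_{q_0})$ already established (via the bound \eqref{eq:inequality100}, dominated convergence, Fubini and Morera) in the proof of Theorem \ref{thm:t1q}, together with the agreement with $J(\lambda_1,\lambda_2)$ for $\lambda_1,\lambda_2>0$ and uniqueness of analytic continuation. Your extra remarks confirming $(0,\infty)^2\subset\hbox{\rm Int}(\Gamma_{q_0})\subset\mathbb C_+^2$ are sound and only make explicit what the paper leaves implicit.
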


\setcounter{equation}{0}
\section{Relationships between the GFFT
and the function space integral of functionals 
in $\mathcal{F}_{A_1,A_2}^{\,\,a,b}$}\label{sec:4}

\par
In this section we establish a relationship  
between the GFFT  and the function space integral
of  functionals in the  Fresnel type class $\mathcal F_{A_1,A_2}^{\,\,a,b}$.

\par
Throughout this section, for convenience, 
we use the following notation:  
for given $\lambda \in \widetilde{\mathbb C}_+$ and $n =1,2,\ldots$, 
let
\begin{equation}\label{eq:gn}
G_n(\lambda,x)
= \exp \bigg\{    
\bigg[\frac{1-\lambda }{2 } \bigg]  
 \sum_{k=1}^{n}[(e_k,x)^{\sim}]^2
+ (\lambda^{1/2 }-1) \sum_{k=1}^{n} (e_k,a)_{C_{a,b}'}(e_k,x)^{\sim} \bigg\}  
\end{equation}
where $\{e_n\}_{n=1}^{\infty}$ is a complete orthonormal set 
in $C_{a,b}'[0,T]$.

\par
To obtain our main results, 
Theorems \ref{thm:limit} and \ref{thm:change-main} below, 
we state a fundamental integration formula
for the  function space $C_{a,b}[0,T]$.

\par
Let $\{e_1,\ldots,e_n\}$ be an orthonormal set in 
$(C_{a,b}'[0,T],\|\cdot\|_{C_{a,b}'})$,
let $k:\mathbb R^n \to \mathbb C$ be a Borel measurable function
and let $K:C_{a,b}[0,T]\to\mathbb C$ be given by equation 
\[
K(x)= k((e_1,x)^{\sim},\ldots,(e_n,x)^{\sim}).
\]
Then  
\begin{equation}\label{eq:c-formula}
\begin{split}
  E[K]
& =\int_{C_{a,b}[0,T]} k((e_1,x)^{\sim},\ldots,(e_n,x)^{\sim})d\mu(x)\\   
& =(2\pi)^{-n/2}  \int_{\mathbb R^n} k(u_1,\ldots,u_n)\\
& \qquad\ \quad   \times
\exp \bigg\{-\sum_{j=1}^{n}\frac{ [u_j-(e_j,a)_{C_{a,b}'}]^2 }{2}\bigg\} 
     du_1\cdots du_n  
\end{split}
\end{equation}
in the sense that if either side of equation \eqref{eq:c-formula} exists, 
both sides exist  and equality holds.

\par
We also need the following lemma to obtain 
our main  theorem in this section.

\begin{lemma}\label{lemma:limit}
Let $\{e_{1},\ldots,e_{n} \}$
be an orthonormal subset of $C_{a,b}'[0,T]$ 
and  let $w \in C_{a,b}'[0,T]$.
Then for each  $\lambda \in \mathbb C_+$, 
the function space integral 
\[
E_x[G_n(\lambda,x)\exp\{i(w,x)^{\sim}\}]
\]
exists and is given by the formula
\begin{equation}\label{eq:basic2}
\begin{split}
&  E_x[G_n(\lambda,x)\exp\{i(w,x)^{\sim}\}]\\
& =  \lambda^{-n/2} \exp \bigg\{
   \bigg[ \frac{\lambda-1}{2\lambda} \bigg] 
   \sum_{k=1}^{n}(e_k,w)_{C_{a,b}'}^{2}  
   - \frac{1}{2}\|w\|_{C_{a,b}'}^{2}  \\
&\qquad \qquad\qquad
+i\lambda^{-1/2}\sum_{k=1}^{n}(e_k,a)_{C_{a,b}'}(e_k,w)_{C_{a,b}'} \\
& \qquad\qquad\qquad
+ i(e_{n+1},a)_{C_{a,b}'} 
\bigg[ \|w\|_{C_{a,b}'}^{2}
   - \sum_{k=1}^{n}(e_k,w)_{C_{a,b}'}^{2}  \bigg]^{1/2}  \bigg\} 
\end{split}
\end{equation}
where $G_n$ is given by equation \eqref{eq:gn} above.
\end{lemma}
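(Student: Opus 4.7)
The plan is to reduce the expectation to a finite-dimensional Gaussian integral through formula \eqref{eq:c-formula}, and then evaluate it in closed form using \eqref{eq:int-formula}. The key preparatory move is to adjoin one extra orthonormal vector so that $(w, x)^{\sim}$ can be expressed in terms of the $(e_k, x)^{\sim}$'s only. Specifically, I would set $r = w - \sum_{k=1}^n (e_k, w)_{C_{a,b}'}\, e_k$ and define $e_{n+1} = r / \|r\|_{C_{a,b}'}$ when $r\neq 0$ (otherwise take $e_{n+1}$ to be any unit vector orthogonal to the $e_k$'s). Then $\{e_1,\ldots,e_{n+1}\}$ is orthonormal in $C_{a,b}'[0,T]$, and Parseval gives
\[
(w, x)^{\sim} = \sum_{k=1}^n (e_k, w)_{C_{a,b}'}\, (e_k, x)^{\sim} + \gamma\, (e_{n+1}, x)^{\sim}, \qquad \gamma = \Big[\|w\|_{C_{a,b}'}^2 - \sum_{k=1}^n (e_k, w)_{C_{a,b}'}^2\Big]^{1/2}.
\]

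With this reduction, the integrand $G_n(\lambda, x)\exp\{i(w, x)^{\sim}\}$ depends on $x$ only through $u_k = (e_k, x)^{\sim}$, $k = 1,\ldots,n+1$, and \eqref{eq:c-formula} rewrites the expectation as an $(n+1)$-fold Lebesgue integral against the product density $\prod_k (2\pi)^{-1/2}\exp\{-(u_k - (e_k, a)_{C_{a,b}'})^2 / 2\}$. Because the cross-terms vanish (the $e_k$'s are orthonormal, so the $u_k$'s are independent Gaussians), the integral factors into a product over $k$.

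For $k = 1, \ldots, n$, combining the $G_n$-exponent with the Gaussian weight produces a one-dimensional integral of the form $\int_\mathbb{R} \exp\{-\tfrac{\lambda}{2} u^2 + [\lambda^{1/2}(e_k,a)_{C_{a,b}'} + i(e_k,w)_{C_{a,b}'}] u\}\, du$ (up to a constant factor in which the $(e_k, a)_{C_{a,b}'}^2/2$ contributions cancel). Here the hypothesis $\lambda \in \mathbb{C}_+$ is exactly what is needed to invoke \eqref{eq:int-formula}, which yields the $\lambda^{-n/2}$ prefactor together with the terms $-(e_k,w)_{C_{a,b}'}^2/(2\lambda)$ and $i\lambda^{-1/2}(e_k,a)_{C_{a,b}'}(e_k,w)_{C_{a,b}'}$ in the exponent. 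For $k = n+1$, only the Gaussian weight integrated against $\exp\{i\gamma u_{n+1}\}$ remains, whose value is the Gaussian characteristic function $\exp\{i\gamma(e_{n+1}, a)_{C_{a,b}'} - \gamma^2/2\}$.

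The one step that requires care — and the likeliest place to slip — is the final algebraic consolidation. I would collect the $-\gamma^2/2$ term together with the sum $-\frac{1}{2\lambda}\sum_{k=1}^n (e_k, w)_{C_{a,b}'}^2$ coming from the first $n$ factors; the $-\gamma^2/2$ contributes $+\frac{1}{2}\sum_{k=1}^n (e_k, w)_{C_{a,b}'}^2 - \frac{1}{2}\|w\|_{C_{a,b}'}^2$, so that the two $\sum$-terms combine into the advertised $\frac{\lambda - 1}{2\lambda}\sum_{k=1}^n (e_k, w)_{C_{a,b}'}^2$, while $-\frac{1}{2}\|w\|_{C_{a,b}'}^2$ stands alone. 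Assembling this with the $\lambda^{-n/2}$ prefactor and the imaginary $e_{n+1}$-contribution $i(e_{n+1}, a)_{C_{a,b}'}\gamma$ produces exactly \eqref{eq:basic2}. Everything else is routine bookkeeping.
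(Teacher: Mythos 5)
Your proof is correct and follows essentially the same route as the paper's: Gram--Schmidt adjunction of $e_{n+1}$, reduction to an $(n+1)$-fold Gaussian integral via \eqref{eq:c-formula}, factorization by independence, and evaluation by \eqref{eq:int-formula}, with the final algebraic consolidation of $-\gamma^2/2$ with $-\frac{1}{2\lambda}\sum_{k=1}^n(e_k,w)_{C_{a,b}'}^2$ carried out correctly. The only (harmless) difference is that the paper first verifies \eqref{eq:basic2} for $\lambda>0$ and then invokes analytic continuation to reach all of $\mathbb{C}_+$, whereas you apply \eqref{eq:int-formula} directly for complex $\lambda$ with $\mathrm{Re}(\lambda)>0$ --- which is legitimate, since after combining the exponent of $G_n$ with the Gaussian weight the quadratic coefficient is $-\lambda/2$, so the integrand is absolutely integrable precisely on $\mathbb{C}_+$.
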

\begin{proof} (Outline)
Using the Gram-Schmidt process, for any $w\in C_{a,b}'[0,T]$ 
we can write  $w = \sum_{k=1}^{n+1}c_k e_k$   where
$\{e_1,\ldots, e_n, e_{n+1}\}$ is an orthonormal set in $C_{a,b}'[0,T]$
and
\[
c_k
 = \begin{cases} 
   (e_k,w)_{C_{a,b}'}     &,\quad   k=1,\ldots,n  \\    
  \big[ \|w\|_{C_{a,b}'}^{2}
        - \sum_{j=1}^{n}(e_j,w)_{C_{a,b}'}^{2}  \big]^{1/2}   
                          &,\quad k=n+1 
\end{cases}.
\]
Then using  \eqref{eq:gn},  \eqref{eq:c-formula}, 
the Fubini theorem   and  \eqref{eq:int-formula}, 
it follows that equation \eqref{eq:basic2} holds 
for all $\lambda>0$.
Finally \eqref{eq:basic2} holds for all $\lambda\in \mathbb C_+$ 
by analytic continuation.
\end{proof}

\par
The following remark will be very useful in the proof 
of our main theorem in this section.

\begin{remark}\label{re:domine}
Let $q_0$ be a positive real number 
and let $\Gamma_{q_0}$ be given by equation \eqref{eq:domain}.
For real numbers  $q_1$ and $q_2$ with $|q_j|>q_0$, $j\in\{1,2\}$,
let $\{\vec\lambda_n\}_{n=1}^{\infty}
    =\{(\lambda_{1,n},\lambda_{2,n})\}_{n=1}^{\infty}$
be a sequence in $\mathbb C_+^2$
such that 
\[
\vec\lambda_n=(\lambda_{1,n},\lambda_{2,n}) 
\to -i\vec q=(-iq_1,-iq_2).
\]
Let $\lambda_{j,n}=\alpha_{j,n}+i\beta_{j,n}$ 
for $j\in\{1,2\}$ and $n\in \mathbb N$.
Then for $j\in\{1,2\}$, 
$\text{\rm  Re}(\lambda_{j,n})=\alpha_{j,n} >0$  
and
\[
\lambda_{j,n}^{-1}
=(\alpha_{j,n}+i\beta_{j,n})^{-1}
=\frac{ \alpha_{j,n} -i\beta_{j,n}}{ \alpha_{j,n}^2+\beta_{j,n}^2} 
\]
for each $n\in \mathbb N$.
Since $|\hbox{\rm Im}((-iq_j)^{-1/2})|
         =1/\sqrt{2|q_j|}<1/\sqrt{2q_0}$ for $j\in\{1,2\}$,
there exists a sufficiently large $L\in \mathbb N$ such that 
for any $n\ge L$, $\lambda_{1,n}$ and  $\lambda_{2,n}$ 
are in $\hbox{\rm Int}(\Gamma_{q_0})$ and
\[
\begin{split}
\delta(q_1,q_2)
&\equiv
\sup \Big( \big\{ | \hbox{\rm Im} (\lambda_{1,n}^{-1/2})| 
                            : n\ge L \big\} \\
& \qquad \qquad \cup 
   \big\{ | \hbox{\rm Im} (\lambda_{2,n}^{-1/2})| : n\ge L \big\} \\
& \qquad \qquad \cup 
   \big\{|\hbox{\rm Im}((-iq_1)^{-1/2})|, 
            |\hbox{\rm Im}((-iq_2)^{-1/2})|\big\}\Big)
<\frac{1}{\sqrt{2q_0}}.
\end{split}
\]   
Thus there exists a positive real number $\varepsilon>1$ 
such that 
\[
\delta(q_1,q_2) < \frac{1}{\varepsilon}  \frac{1}{\sqrt{2 q_0}}. 
\]

\par
Let $\{e_n\}_{n=1}^{\infty}$ be a complete orthonormal set 
in $C_{a,b}'[0,T]$.
Using Parseval's identity, it follows that
\[
(g_1,g_2)_{C_{a,b}'}
=\sum\limits_{n=1}^{\infty}(e_n,g_1)_{C_{a,b}'}(e_n,g_2)_{C_{a,b}'}
\]
for all $g_1, g_2 \in C_{a,b}'[0,T]$.
In addition for   each $g\in C_{a,b}'[0,T]$,
\[
\|g\|_{C_{a,b}}^2-\sum\limits_{k=1}^{n}(e_k,g)_{C_{a,b}'}^2 
=\sum\limits_{k=n+1}^{\infty}(e_k,g)_{C_{a,b}'}^2  
\ge0
\]
for every $n\in \mathbb N$.
Note that for   $g\in C_{a,b}'[0,T]$, 
$(g,a)_{C_{a,b}'}$ may be positive, negative or zero.
Since 
\[
(g,a)_{C_{a,b}'}
=\sum\limits_{n=1}^{\infty}(e_n,g)_{C_{a,b}'}(e_n,a)_{C_{a,b}'}
\]
and for  $\varepsilon>1$,
\[
\begin{split}
-\varepsilon\|g\|_{C_{a,b}'}\|a\|_{C_{a,b}'}
&< -\|g\|_{C_{a,b}'}\|a\|_{C_{a,b}'}\\
&\le (g,a)_{C_{a,b}'}\\
&\le \|g\|_{C_{a,b}'}\|a\|_{C_{a,b}'}
<\varepsilon\|g\|_{C_{a,b}'}\|a\|_{C_{a,b}'},
\end{split}
\]
there exists a sufficiently large $K_j\in \mathbb N$
such that for any $n\ge K_j$,
\[
\begin{split}
\bigg|\sum\limits_{k=1}^{n}(e_k,A_j^{1/2}w)_{C_{a,b}'}
        (e_k,a  )_{C_{a,b}'}\bigg|
&< \varepsilon\|A_j^{1/2}w\|_{C_{a,b}'}\|a\|_{C_{a,b}'}\\
&\le \varepsilon\|A_j^{1/2}\|_{o}\|w\|_{C_{a,b}'}\|a\|_{C_{a,b}'}
\end{split}
\]
for  $j\in \{1,2\}$.

\par
Using these and a long and tedious calculation 
we can show  that for  every $n \ge \max\{L,K_1,K_2\}$,
\begin{equation*}\label{eq:re-domi}
\begin{split}
&\bigg|\exp\bigg\{\sum_{j=1}^2 
   \bigg(\bigg[ \frac{\lambda_{j,n}-1}{2\lambda_{j,n}} \bigg] 
\sum_{k=1}^{n}(e_k,A_j^{1/2}w)_{C_{a,b}'}^{2}  
- \frac{1}{2}\|A_j^{1/2}w\|_{C_{a,b}'}^{2}  \\
&\qquad \qquad\qquad
+i\lambda_{j,n}^{-1/2}\sum_{k=1}^{n} 
   (e_k,A_j^{1/2}w)_{C_{a,b}'}(e_k,a)_{C_{a,b}'} \\
& \qquad\qquad\qquad
+ i(e_{n+1},a)_{C_{a,b}'} 
\bigg[ \|A_j^{1/2}w\|_{C_{a,b}'}^{2}
   - \sum_{k=1}^{n}(e_k,A_j^{1/2}w)_{C_{a,b}'}^{2}  
    \bigg]^{1/2} \bigg) \bigg\}\bigg|\\
&<k(q_0; \vec A;w)
\end{split}
\end{equation*}
where $k(q_0; \vec A; w)$ is given by \eqref{eq:Domi-1}.
\end{remark}

\par
In our next theorem, for  $F \in \mathcal F_{A_1,A_2}^{\,\,a,b}$, 
we express the GFFT of $F$ 
as the limit of a sequence of function space integrals
on $C_{a,b}^2[0,T]$.

\begin{theorem}\label{thm:limit}
Let $q_0$ and $F$ be as in Theorem \ref{thm:tpq}.
Let $\{e_n\}_{n= 1}^{\infty}$ be a complete orthonormal set 
in $C_{a,b}'[0,T]$ 
and let $\{(\lambda_{1,n},\lambda_{2,n}) \}_{n=1}^{\infty} $ 
be a sequence in  $\mathbb C_+^2$ 
such that $\lambda_{j,n}\to -iq_j$
where $q_j$ is a real number with $|q_j|> q_0 $, $j\in\{1,2\}$. 
Then for $p\in[1,2]$ and for s-a.e. $(y_1,y_2)\in C_{a,b}^2[0,T]$, 
\[
\begin{split}
&T_{\vec q}^{(p)}(F)(y_1,y_2)\\
&=\lim_{n \to \infty} \lambda_{1,n}^{n/2} \lambda_{2,n}^{n/2} 
   E_{\vec x}[G_n(\lambda_{1,n} ,x_1)G_n(\lambda_{2,n} ,x_2)    
   F(y_1+x_1,y_2+x_2) ]
\end{split}
\]
where $G_n$ is given by equation  \eqref{eq:gn} above.
\end{theorem}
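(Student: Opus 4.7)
The plan is to substitute the Fresnel representation of $F$ into the expectation on the right hand side, swap the order of integration, evaluate the inner Gaussian integrals using Lemma \ref{lemma:limit} with $w$ replaced by $A_j^{1/2}w$, and then pass to the limit by dominated convergence using the uniform bound supplied by Remark \ref{re:domine}. The identification of the limit with $T_{\vec q}^{(p)}(F)(y_1,y_2)$ is then immediate from Theorem \ref{thm:t1q} (together with Theorem \ref{thm:tpq} and Remark \ref{re:remark}(1)).

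More concretely, first I would fix $(y_1,y_2)\in C_{a,b}^2[0,T]$ off some scale-invariant null set and write
\[
\begin{split}
&E_{\vec x}[G_n(\lambda_{1,n},x_1)G_n(\lambda_{2,n},x_2)F(y_1+x_1,y_2+x_2)] \\
&=\int_{C_{a,b}'[0,T]}\exp\bigg\{\sum_{j=1}^2 i(A_j^{1/2}w,y_j)^{\sim}\bigg\}\prod_{j=1}^2 E_{x_j}\big[G_n(\lambda_{j,n},x_j)\exp\{i(A_j^{1/2}w,x_j)^{\sim}\}\big]df(w),
\end{split}
\]
where the Fubini interchange is justified for $\vec\lambda_n\in\mathbb C_+^2$ by the elementary estimate $|G_n(\lambda,x)|\le\exp\{\tfrac12\sum_{k=1}^n[(e_k,x)^{\sim}]^2+C_n|{\rm Im}(\lambda^{1/2})|\sum_{k=1}^n|(e_k,a)_{C_{a,b}'}(e_k,x)^{\sim}|\}$ followed by Gaussianity. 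Then I would invoke Lemma \ref{lemma:limit} (applied with $w$ replaced by $A_j^{1/2}w$) to get the explicit evaluation
\[
E_{x_j}\big[G_n(\lambda_{j,n},x_j)\exp\{i(A_j^{1/2}w,x_j)^{\sim}\}\big]=\lambda_{j,n}^{-n/2}\exp\{\Phi_n^{(j)}(\lambda_{j,n};w)\},
\]
where $\Phi_n^{(j)}(\lambda_{j,n};w)$ stands for the exponent in \eqref{eq:basic2} with $w$ replaced by $A_j^{1/2}w$. The factors $\lambda_{j,n}^{-n/2}$ are precisely cancelled by the prefactor $\lambda_{1,n}^{n/2}\lambda_{2,n}^{n/2}$ on the right hand side of the claimed identity.

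Next I would check pointwise convergence of the integrand. Using Parseval's identity in $C_{a,b}'[0,T]$, as $n\to\infty$,
\[
\sum_{k=1}^n (e_k,A_j^{1/2}w)_{C_{a,b}'}^2\longrightarrow\|A_j^{1/2}w\|_{C_{a,b}'}^2=(A_jw,w)_{C_{a,b}'},\qquad \sum_{k=1}^n(e_k,A_j^{1/2}w)_{C_{a,b}'}(e_k,a)_{C_{a,b}'}\longrightarrow(A_j^{1/2}w,a)_{C_{a,b}'},
\]
and the last ``residual'' term in \eqref{eq:basic2} tends to zero. Combining these with $\lambda_{j,n}\to -iq_j$ shows that $\exp\{\Phi_n^{(j)}(\lambda_{j,n};w)\}$ converges pointwise in $w$ to the $j$-th factor of $\psi(-i\vec q;\vec A;w)$ as defined by \eqref{eq:h-function}. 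Thus the full integrand converges pointwise to $\exp\{\sum_{j=1}^2 i(A_j^{1/2}w,y_j)^{\sim}\}\psi(-i\vec q;\vec A;w)$.

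The hardest step, and the one requiring the most care, is justifying the interchange of the limit $n\to\infty$ with the integral $\int_{C_{a,b}'[0,T]}\cdots df(w)$. For this I would use the uniform bound furnished by Remark \ref{re:domine}: for $n$ sufficiently large one has
\[
\big|\exp\{\Phi_n^{(1)}(\lambda_{1,n};w)+\Phi_n^{(2)}(\lambda_{2,n};w)\}\big|<k(q_0;\vec A;w),
\]
and $k(q_0;\vec A;\cdot)\in L^1(|f|)$ precisely because $F\in\mathcal F_{A_1,A_2}^{\,\,q_0}$, see \eqref{eq:finite000}. Since the modulus of the oscillatory factor $\exp\{\sum_{j=1}^2 i(A_j^{1/2}w,y_j)^{\sim}\}$ is bounded by $1$, the dominated convergence theorem applies and yields
\[
\lim_{n\to\infty}\lambda_{1,n}^{n/2}\lambda_{2,n}^{n/2}E_{\vec x}[G_n(\lambda_{1,n},x_1)G_n(\lambda_{2,n},x_2)F(y_1+x_1,y_2+x_2)]=\int_{C_{a,b}'[0,T]}\exp\bigg\{\sum_{j=1}^2 i(A_j^{1/2}w,y_j)^{\sim}\bigg\}\psi(-i\vec q;\vec A;w)\,df(w).
\]
By Theorems \ref{thm:t1q} and \ref{thm:tpq} the right hand side is exactly $T_{\vec q}^{(p)}(F)(y_1,y_2)$ for every $p\in[1,2]$, which completes the proof.
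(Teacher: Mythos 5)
Your proposal is correct and follows essentially the same route as the paper: substitute the representation \eqref{eq:element2}, apply Fubini and Lemma \ref{lemma:limit} with $w$ replaced by $A_j^{1/2}w$ so the $\lambda_{j,n}^{-n/2}$ factors cancel the prefactor, use Parseval to identify the pointwise limit of the exponent as the exponent of $\psi(-i\vec q;\vec A;w)$, dominate by $k(q_0;\vec A;\cdot)$ via Remark \ref{re:domine} and \eqref{eq:finite000}, and identify the resulting integral with the common expression for $T_{\vec q}^{(p)}(F)$ from Theorems \ref{thm:t1q} and \ref{thm:tpq}. The only difference is that you spell out the Fubini justification and the vanishing of the residual term in \eqref{eq:basic2} slightly more explicitly than the paper does.
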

\begin{proof}
By Theorems \ref{thm:t1q} and \ref{thm:tpq} above, 
we know that for each $p\in[1,2]$, 
the $L_p$ analytic GFFT  of $F$, $T_{\vec q}^{(p)}(F)$  
exists and is given by the right hand side 
of equation    \eqref{eq:tpq}.
Thus it suffices to show that
\[
\begin{split}
&T_{\vec q}^{(1)}(F)(y_1,y_2)
 =E_{\vec x}^{\text{\rm anf}_{\vec q}}[F(y_1+x_1,y_2+x_2)]\\
&=  \lim_{n \to \infty} \lambda_{1,n}^{n/2}  \lambda_{2,n}^{n/2} 
E_{\vec x}[G_n(\lambda_{1,n} ,x_1)G_n(\lambda_{2,n} ,x_2)    
     F(y_1+x_1,y_2+x_2) ].
\end{split}
\]

\par
Using equation \eqref{eq:element2}, the Fubini theorem 
and equation  \eqref{eq:basic2}
with $\lambda$ and $w$ replaced with $\lambda_{j,n}$ and $A_j^{1/2}w$, 
$j\in\{1,2\}$, respectively,
we see that 
\begin{equation}\label{eq:limit00}
\begin{split}
&\lambda_{1,n}^{n/2}\lambda_{2,n}^{n/2}
E_{\vec x} [ G_n(\lambda_{1,n} ,x_1)G_n(\lambda_{2,n} ,x_2)
   F(y_1+x_1,y_2+x_2) ] \\
&= \lambda_{1,n}^{n/2}\lambda_{2,n}^{n/2}
\int_{C_{a,b}'[0,T]} \exp\bigg\{
  \sum_{j=1}^2i(A_j^{1/2}w,y_j)^{\sim}\bigg\} \\
&\qquad\qquad\quad\times 
\bigg(\prod_{j=1}^2E_{x_j} \big[G_n(\lambda_{1,n}^{-1/2},x_j)
\exp \big\{i(A_j^{1/2}w,x_j)^{\sim} \big\}  \big] \bigg) df(w)   \\
&=   \int_{C_{a,b}'[0,T]}   
 \exp \bigg\{ \sum_{j=1}^2\bigg(  i(A_j^{1/2}w,y_j)^{\sim}
 + \bigg[ \frac{\lambda_{j,n} -1}{2\lambda_{j,n}} \bigg]
\sum_{k=1}^{n}(e_k,A_j^{1/2}w)_{C_{a,b}'}^{2}\\
&\qquad\qquad
-\frac{1}{2} \|A_j^{1/2}w\|_{C_{a,b}'}^{2}  
+ i\lambda_{j,n}^{-1/2}
\sum _{k=1}^{n} (e_k,a)_{C_{a,b}'}(e_k,A_j^{1/2}w)_{C_{a,b}'} \\
& \qquad\qquad
+i(e_{n+1},a)_{C_{a,b}'}\bigg[ \|A_j^{1/2}w\|_{C_{a,b}'}^{2} 
   - \sum_{k=1}^{n}(e_k,A_j^{1/2}w)_{C_{a,b}'}^{2}   
 \bigg ]^{1/2}   \bigg)   \bigg\}  df(w) .
\end{split}
\end{equation}
But, by Remark \ref{re:domine} we see that 
the last expression of  \eqref{eq:limit00} 
is dominated by   \eqref{eq:finite000} on the region $\Gamma_{q_0}$ 
given by equation \eqref{eq:domain}   
for all but a finite number of values of   $n$.
Next  using the dominated convergence theorem,   
Parseval's relation and equation \eqref{eq:tpq}, 
it follows that for s-a.e. $(y_1,y_2)\in C_{a,b}^2[0,T]$,
\[
\begin{split}
&\lim_{n \to \infty}  \lambda_{1,n}^{ n/2}\lambda_{2,n}^{ n/2}
E_{\vec x} [ G_n(\lambda_{1,n} ,x_1)G_n(\lambda_{2,n},x_2)   
    F(y_1+x_1,y_2+x_2) ]      \\
&=\int_{C_{a,b}'[0,T]} \lim_{n \to \infty}
\exp \bigg\{ \sum_{j=1}^2\bigg(
  i(A_j^{1/2}w,y_j)^{\sim}
 + \bigg[ \frac{\lambda_{j,n} -1}{2\lambda_{j,n}} \bigg]
\sum_{k=1}^{n}(e_k,A_j^{1/2}w)_{C_{a,b}'}^{2}\\
&\qquad\qquad\qquad\qquad
-\frac{1}{2} \|A_j^{1/2}w\|_{C_{a,b}'}^{2}  
+ i\lambda_{j,n}^{-1/2}
\sum _{k=1}^{n} (e_k,a)_{C_{a,b}'}(e_k,A_j^{1/2}w)_{C_{a,b}'} \\
& \qquad\qquad
+i(e_{n+1},a)_{C_{a,b}'}   \bigg[ \|A_j^{1/2}w\|_{C_{a,b}'}^{2} 
  - \sum_{k=1}^{n}(e_k,A_j^{1/2}w)_{C_{a,b}'}^{2} \bigg ]^{1/2} \bigg)\bigg\}
  df(w) \\
&= \int_{C_{a,b}'[0,T]} \exp \bigg\{\sum_{j=1}^2i(A_j^{1/2}w,y_j)^{\sim}\bigg\}
    \psi(-i\vec q;\vec A;w)   df(w)\\
&=T_{\vec q}^{(1)}(F)(y_1,y_2) 
\end{split}
\]
which concludes the proof of Theorem \ref{thm:limit}.
\end{proof}

\begin{corollary} 
Let $q_0$, $F$, $\{e_n\}_{n= 1}^{\infty}$,  
$\{(\lambda_{1,n},\lambda_{2,n})\}_{n=1}^{\infty}$ 
and $(q_1,q_2)$
be as in Theorem \ref{thm:limit}. 
Then
\begin{equation*}\label{eq;gfi-limit}
E_{\vec x}^{\text{\rm anf}_{\vec q}}[F(x_1,x_2) ]
=\lim_{n \to \infty} \lambda_{1,n}^{n/2} \lambda_{2,n}^{n/2} 
E_{\vec x}[G_{n}(\lambda_{1,n} ,x_1)G_{n}(\lambda_{2,n} ,x_2) 
    F(x_1,x_2) ]
\end{equation*}
where $G_n$ is given by equation  \eqref{eq:gn} above.
\end{corollary}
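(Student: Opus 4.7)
The plan is to derive this corollary as a direct specialization of Theorem \ref{thm:limit} at the point $(y_1,y_2)=(0,0)$, invoking the identity relating the generalized analytic Feynman integral to the $L_1$ analytic GFFT evaluated at the origin.

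First, I would recall equation \eqref{eq:t1q0} together with Remark \ref{re:remark}(1), which together give
\[
E_{\vec x}^{\text{\rm anf}_{\vec q}}[F(x_1,x_2)] = T_{\vec q}^{(1)}(F)(0,0).
\]
This identifies the left-hand side of the desired equation with a particular evaluation of the $L_1$ analytic GFFT already handled by Theorem \ref{thm:limit}.

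Next, I would apply Theorem \ref{thm:limit} with $p=1$. Inspecting the proof of that theorem, the limit formula
\[
T_{\vec q}^{(1)}(F)(y_1,y_2) = \lim_{n \to \infty} \lambda_{1,n}^{n/2} \lambda_{2,n}^{n/2} E_{\vec x}[G_n(\lambda_{1,n},x_1)G_n(\lambda_{2,n},x_2)F(y_1+x_1,y_2+x_2)]
\]
is established pointwise via the dominated convergence theorem (the dominating bound depends only on $w$ through the function $k(q_0;\vec A;w)$, not on $(y_1,y_2)$). Consequently the formula holds for every fixed $(y_1,y_2)$, in particular at $(y_1,y_2)=(0,0)$. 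Substituting this choice and using $F(0+x_1,0+x_2)=F(x_1,x_2)$ yields
\[
T_{\vec q}^{(1)}(F)(0,0) = \lim_{n \to \infty} \lambda_{1,n}^{n/2} \lambda_{2,n}^{n/2} E_{\vec x}[G_n(\lambda_{1,n},x_1)G_n(\lambda_{2,n},x_2)F(x_1,x_2)].
\]
Combining this with the identity from the first step completes the argument.

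There is essentially no obstacle here since both required ingredients are already in place. The only mild subtlety is the distinction between s-a.e. validity and pointwise validity of the limit formula: if one reads Theorem \ref{thm:limit} as merely s-a.e., then evaluation at the single point $(0,0)$ would be illegitimate. I would address this by pointing out that the dominated convergence argument in the proof of Theorem \ref{thm:limit} produces the limit for every $(y_1,y_2)$, so restricting to $(0,0)$ is justified. Alternatively, one may bypass the issue altogether by repeating the computation of \eqref{eq:limit00} with $y_1=y_2=0$, where the exponential factor $\exp\{\sum_{j=1}^2 i(A_j^{1/2}w,y_j)^{\sim}\}$ becomes $1$, and then applying dominated convergence directly.
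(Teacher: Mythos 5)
Your proposal is correct and matches the paper's intent exactly: the paper states this corollary without proof as an immediate specialization of Theorem \ref{thm:limit} at $(y_1,y_2)=(0,0)$ combined with equation \eqref{eq:t1q0}. Your handling of the s-a.e.\ versus pointwise subtlety (noting that the dominated convergence bound $k(q_0;\vec A;w)$ is independent of $(y_1,y_2)$, so the limit formula holds at the specific point $(0,0)$) is the right way to make the specialization legitimate.
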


\begin{corollary}\label{thm:limit2}
Let $q_0$, $F$ and  $\{e_n\}_{n= 1}^{\infty}$  
be as in Theorem \ref{thm:limit}
and let $\Gamma_{q_0}$ be given by \eqref{eq:domain}.
Let $\vec\lambda=(\lambda_1,\lambda_2) 
\in \hbox{\rm Int}(\Gamma_{q_0})$  
and let $\{(\lambda_{1,n},\lambda_{2,n}) \}_{n=1}^{\infty}$ 
be a sequence in  $\mathbb C_+^2$ such that 
$\lambda_{j,n}\to \lambda_j $, $j\in\{1,2\}$. 
Then
\begin{equation}\label{eq:limit02}
E_{\vec x}^{\text{\rm an}_{\vec \lambda}}
 [F(x_1,x_2)]=\lim_{n \to \infty} 
\lambda_{1,n}^{n/2} \lambda_{2,n}^{n/2} 
E_{\vec x}[ G_n(\lambda_{1,n} ,x_1)G_n(\lambda_{2,n} ,x_2) F(x_1,x_2)]
\end{equation}
where $G_n$ is given by equation \eqref{eq:gn} above.
\end{corollary}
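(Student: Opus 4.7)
The plan is to follow the proof of Theorem \ref{thm:limit} essentially verbatim, with the target $-i\vec q$ replaced by the interior point $\vec\lambda \in \hbox{\rm Int}(\Gamma_{q_0})$ and the outer points $(y_1,y_2)$ specialized to $(0,0)$. The target identity we need to reach is $\int_{C_{a,b}'[0,T]}\psi(\vec\lambda;\vec A;w)\,df(w)$, which by the corollary preceding Section \ref{sec:4} equals $E_{\vec x}^{\text{\rm an}_{\vec\lambda}}[F(x_1,x_2)]$.

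First, using \eqref{eq:element2} and the Fubini theorem, I would rewrite $\lambda_{1,n}^{n/2}\lambda_{2,n}^{n/2} E_{\vec x}[G_n(\lambda_{1,n},x_1)G_n(\lambda_{2,n},x_2)F(x_1,x_2)]$ as the integral over $C_{a,b}'[0,T]$ of $\lambda_{1,n}^{n/2}\lambda_{2,n}^{n/2}\prod_{j=1}^{2}E_{x_j}[G_n(\lambda_{j,n},x_j)\exp\{i(A_j^{1/2}w,x_j)^{\sim}\}]$ against $df(w)$. Lemma \ref{lemma:limit}, applied with $\lambda$ replaced by $\lambda_{j,n}$ and $w$ by $A_j^{1/2}w$, evaluates each inner expectation in closed form; the resulting exponent is precisely the $(y_1,y_2)=(0,0)$ specialization of the one appearing in \eqref{eq:limit00}.

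Second, since $\hbox{\rm Int}(\Gamma_{q_0})$ is open and $\lambda_{j,n}\to \lambda_j$ for $j\in\{1,2\}$, the pairs $(\lambda_{1,n},\lambda_{2,n})$ lie in $\hbox{\rm Int}(\Gamma_{q_0})$ for all sufficiently large $n$. The majorization performed in Remark \ref{re:domine} uses only three ingredients: eventual membership of the sequence in $\hbox{\rm Int}(\Gamma_{q_0})$; the Parseval-based bound on $|\sum_{k=1}^{n}(e_k,A_j^{1/2}w)_{C_{a,b}'}(e_k,a)_{C_{a,b}'}|$; and the Bessel-type inequality controlling $[\|A_j^{1/2}w\|_{C_{a,b}'}^{2}-\sum_{k=1}^{n}(e_k,A_j^{1/2}w)_{C_{a,b}'}^{2}]^{1/2}$. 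None of these depends on the target being imaginary, so the same calculation yields the $n$-uniform bound $k(q_0;\vec A;w)$ on the modulus of the integrand. By \eqref{eq:finite000} this is an integrable majorant against $|f|$, so the dominated convergence theorem applies.

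Third, I pass the limit under the integral. Parseval's identity gives $\sum_{k=1}^{n}(e_k,A_j^{1/2}w)_{C_{a,b}'}^{2}\to (A_jw,w)_{C_{a,b}'}$ and $\sum_{k=1}^{n}(e_k,a)_{C_{a,b}'}(e_k,A_j^{1/2}w)_{C_{a,b}'}\to (A_j^{1/2}w,a)_{C_{a,b}'}$, while the term carrying the factor $(e_{n+1},a)_{C_{a,b}'}\to 0$ vanishes because its bracket is uniformly bounded by $\|A_j^{1/2}w\|_{C_{a,b}'}$. The remaining exponent collapses to that of $\psi(\vec\lambda;\vec A;w)$ in \eqref{eq:h-function}, and the right-hand side of \eqref{eq:limit02} follows.

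The only point that warrants care is the verification that the Remark \ref{re:domine} bound transfers intact from the imaginary-axis target $-i\vec q$ to a general interior target $\vec\lambda$; but because each of the three ingredients above depends only on the inclusion $(\lambda_{1,n},\lambda_{2,n})\in\hbox{\rm Int}(\Gamma_{q_0})$ for large $n$ and not on $\hbox{\rm Re}(\lambda_{j,n})\to 0$, this is a routine adaptation rather than a genuine obstacle.
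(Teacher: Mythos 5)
Your proposal is correct and takes essentially the same route the paper intends: the corollary is stated without a separate proof precisely because it is the proof of Theorem \ref{thm:limit} repeated verbatim with the target $-i\vec q$ replaced by an interior point $\vec\lambda$ and $(y_1,y_2)=(0,0)$. Your observation that the majorization of Remark \ref{re:domine} uses only the eventual membership of $\{(\lambda_{1,n},\lambda_{2,n})\}$ in $\hbox{\rm Int}(\Gamma_{q_0})$ (together with the Parseval and Bessel bounds), and not the condition $\hbox{\rm Re}(\lambda_{j,n})\to 0$, is exactly the point that justifies the adaptation.
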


\par
Our another result, namely a change of scale formula 
for function space integrals now 
follows   from Corollary \ref{thm:limit2} above.

\begin{theorem}\label{thm:change-main}
Let $F \in \mathcal F_{A_1,A_2}^{\,\,a,b}$
and let $\{e_n\}_{n=1}^{\infty}$ 
be a complete orthonormal set in $C_{a,b}'[0,T]$.
Then for any $\rho_1 > 0$ and $\rho_2>0$, 
\[
E_{\vec x}[F(\rho_1 x_1,\rho_2 x_2)]
= \lim_{n \to \infty} \rho_1^{-n}  \rho_2^{-n}  
E_{\vec x} [ G_n(\rho_1^{-2},x_1)G_n(\rho_2^{-2},x_2) F(x_1,x_2)]
\]
where $G_n$ is given by equation \eqref{eq:gn} above.
\end{theorem}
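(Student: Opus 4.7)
The plan is to deduce the change of scale formula by specializing the key intermediate identity in the proof of Theorem~\ref{thm:limit} to the case of a positive real scaling parameter. The advantage of this specialization is that, although the statement is for an arbitrary $F\in\mathcal{F}_{A_1,A_2}^{\,\,a,b}$ without any $q_0$-integrability assumption, the domination argument goes through using only the bound $|\psi|\le 1$ valid in the real positive regime (cf.\ Remark~\ref{re:effect-a}), so no appeal to \eqref{eq:finite000} is required.

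First, I would evaluate the left-hand side directly. Using the representation \eqref{eq:element2}, assertion~(ii) of Section~\ref{sec:2} (i.e.\ $(w,\rho x)^{\sim}=\rho(w,x)^{\sim}$), Fubini's theorem, and the Gaussian formula \eqref{eq:int-formula} applied to each integrand $\exp\{i\rho_j(A_j^{1/2}w,x_j)^{\sim}\}$, one computes
\[
E_{\vec x}[F(\rho_1 x_1,\rho_2 x_2)]
=\int_{C_{a,b}'[0,T]}\psi\bigl((\rho_1^{-2},\rho_2^{-2});\vec A;w\bigr)\,df(w),
\]
since the choice $\lambda_j=\rho_j^{-2}$ matches the variance and mean factors in $\psi$.

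Next, I would apply the calculation leading to \eqref{eq:limit00} in the proof of Theorem~\ref{thm:limit} with the constant sequence $\lambda_{j,n}=\rho_j^{-2}$ and with $y_1=y_2=0$. This expresses the $n$-th term on the right-hand side as
\[
\rho_1^{-n}\rho_2^{-n}\,E_{\vec x}[G_n(\rho_1^{-2},x_1)G_n(\rho_2^{-2},x_2)F(x_1,x_2)]
=\int_{C_{a,b}'[0,T]}I_n(w)\,df(w),
\]
where $I_n(w)$ is the bracketed exponential in \eqref{eq:limit00}. A short calculation shows that the real part of the exponent in $I_n(w)$ equals
\[
\sum_{j=1}^{2}\Bigl[\tfrac{1-\rho_j^{2}}{2}\sum_{k=1}^{n}(e_k,A_j^{1/2}w)_{C_{a,b}'}^{2}-\tfrac12\|A_j^{1/2}w\|_{C_{a,b}'}^{2}\Bigr],
\]
and Bessel's inequality $\sum_{k=1}^{n}(e_k,A_j^{1/2}w)_{C_{a,b}'}^{2}\le\|A_j^{1/2}w\|_{C_{a,b}'}^{2}$ forces this quantity to be nonpositive for each $\rho_j>0$. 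Hence $|I_n(w)|\le 1$ uniformly in $n$ and $w$, and the constant function $1$ is $|f|$-integrable since $f$ is a finite complex Borel measure.

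Finally, passing to the limit by Parseval's identity $\sum_{k=1}^{\infty}(e_k,A_j^{1/2}w)_{C_{a,b}'}(e_k,a)_{C_{a,b}'}=(A_j^{1/2}w,a)_{C_{a,b}'}$, together with the observation that $(e_{n+1},a)_{C_{a,b}'}\to 0$ and that the square-root remainder $[\|A_j^{1/2}w\|_{C_{a,b}'}^{2}-\sum_{k=1}^{n}(e_k,A_j^{1/2}w)_{C_{a,b}'}^{2}]^{1/2}$ is bounded by $\|A_j^{1/2}w\|_{C_{a,b}'}$, one obtains the pointwise limit $I_n(w)\to\psi((\rho_1^{-2},\rho_2^{-2});\vec A;w)$. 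The dominated convergence theorem then yields the desired equality. The only delicate point—and the main thing to verify carefully—is that the uniform bound $|I_n(w)|\le 1$ really holds for every $n$ (including the imaginary $(e_{n+1},a)$ correction term), since this is what allows the argument to bypass the subclass $\mathcal{F}_{A_1,A_2}^{\,\,q_0}$ and work for all $F\in\mathcal{F}_{A_1,A_2}^{\,\,a,b}$.
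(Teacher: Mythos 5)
Your proposal is correct. It follows the same computational skeleton as the paper -- the paper's proof of Theorem \ref{thm:change-main} is the one-line instruction to set $\lambda_j=\lambda_{j,n}=\rho_j^{-2}$ in equation \eqref{eq:limit02} of Corollary \ref{thm:limit2}, which in turn rests on the identity \eqref{eq:limit00} and Lemma \ref{lemma:limit}, exactly the machinery you invoke. Where you genuinely depart from the paper is in the domination step, and this is not a cosmetic difference: Corollary \ref{thm:limit2} (via Theorems \ref{thm:limit} and \ref{thm:tpq}) is stated only for $F\in\mathcal{F}_{A_1,A_2}^{\,\,q_0}$, i.e.\ under the integrability condition \eqref{eq:finite000}, whereas Theorem \ref{thm:change-main} is asserted for every $F\in\mathcal{F}_{A_1,A_2}^{\,\,a,b}$. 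You close that gap by observing that when the parameters are real and positive the terms $i\lambda_j^{-1/2}\sum_k(e_k,a)_{C_{a,b}'}(e_k,A_j^{1/2}w)_{C_{a,b}'}$ and $i(e_{n+1},a)_{C_{a,b}'}[\cdots]^{1/2}$ are purely imaginary (the bracket being nonnegative by Bessel), so the real part of the exponent reduces to $\sum_j\bigl[\tfrac{1-\rho_j^2}{2}S_{n,j}-\tfrac12 N_j\bigr]$ with $0\le S_{n,j}\le N_j$, which is $\le-\tfrac{\rho_j^2}{2}N_j\le 0$ when $\rho_j\le 1$ and $\le-\tfrac12 N_j\le 0$ when $\rho_j>1$; hence $|I_n(w)|\le 1$ and the constant $1$ dominates, using only finiteness of $|f|$. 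This is the correct justification of the theorem at the level of generality at which it is stated, and it is a sharper argument than the paper's literal citation. Your identification of the left-hand side with $\int\psi((\rho_1^{-2},\rho_2^{-2});\vec A;w)\,df(w)$ and the passage to the limit via Parseval and $(e_{n+1},a)_{C_{a,b}'}\to 0$ are also correct.
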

\begin{proof}
 Simply choose  $\lambda_{j} = \rho_j^{-2}$ for $j\in\{1,2\}$ 
and  $ \lambda_{j,n} = \rho_j^{-2}$ for $j\in\{1,2\}$  
and  $n\in\mathbb N$ in equation \eqref{eq:limit02}.
\end{proof}

\begin{remark}
Of course, if we choose $a(t)\equiv 0$, $b(t)=t$, 
$A_1=I$(identity operator)  and $A_2=0$(zero operator),
then the function space $C_{a,b}[0,T]$ reduces 
to the classical Wiener space $C_0[0,T]$
and the generalized Fresnel type class $\mathcal{F}_{A_1,A_2}^{\,\,a,b}$
reduces to the Fresnel class $\mathcal{F}(C_0[0,T])$.
It is known that $\mathcal{F}(C_0[0,T])$ 
forms a Banach algebra over the complex field and that
$\mathcal{F}(C_0[0,T])$ and $\mathcal{S}$
are isometrically isomorphic. 
See \cite{Johnson82}.
In this case, we have the relationships 
between  the analytic Feynman integral and the Wiener integral
on classical Wiener space  as discussed in \cite{CS87I} and \cite{CS87II}.
\end{remark}

\setcounter{equation}{0}
\section{The first variation of functionals in 
$\mathcal{F}_{A_1,A_2}^{\,\,a,b}$}\label{sec:5}

In this section, 
we first  give the definition of 
the first variation of a functional $F$ on $C_{a,b}^2[0,T]$.
The following definition
of the first variation on product space 
is due to Yoo and Kim \cite{YK07}.

\begin{definition}
Let $F$ be a  functional on $C_{a,b}^2[0,T]$
and let $g_1$ and $g_2$ be elements of  $C_{a,b}[0,T]$.
Then
\begin{equation}\label{eq:1st}
\delta F(x_1,x_2|g_1,g_2)
 =\frac{\partial}{\partial h}F(x_1+h g_1,x_2)\bigg|_{h=0}
+\frac{\partial}{\partial h}F(x_1, x_2+h g_2)\bigg|_{h=0}
\end{equation}
(if it exists) is called the first variation of $F$ 
in the direction of $(g_1, g_2)$.
\end{definition} 

\par
Throughout this section, 
when working with $\delta F(x_1, x_2|g_1,g_2)$,
we will always require $g_1$ and $g_2$ 
to be   elements of $C_{a,b}'[0,T]$.

\par
For $j\in\{1,2\}$, let $g_j\in C_{a,b}'[0,T]$
and let $F$ be an element of $\mathcal{F}_{A_1,A_2}^{\,\,a,b}$
whose associated measure $f$, see equation \eqref{eq:element2}, 
satisfies the inequality
\begin{equation}\label{eq:finite102}
\int_{C_{a,b}'[0,T]}\|w\|_{C_{a,b}'} d|f|(w)<+\infty.
\end{equation}
Then using equation \eqref{eq:1st}, 
we obtain that 
\begin{equation}\label{eq:evaluation-delta}
\begin{split}
&\delta F(x_1,x_2|g_1,g_2)\\
&=\sum_{k=1}^2\bigg[\frac{\partial}{\partial h}\bigg(
 \int_{C_{a,b}'[0,T]}\exp\bigg\{\sum_{j=1}^2i(A_j^{1/2}w,x_j)^{\sim}\\
&\qquad\qquad\qquad\qquad\qquad\qquad\qquad
+ ih(A_k^{1/2}w,g_k)^{\sim}\bigg\} d f(w)\bigg)\bigg|_{h=0}\bigg]\\ 
&= \int_{C_{a,b}'[0,T]}\Big[\sum_{k=1}^2i(A_k^{1/2}w,g_k)_{C_{a,b}'}\Big]
\exp\bigg\{\sum_{j=1}^2i(A_j^{1/2}w,x_j)^{\sim}\bigg\} df(w)\\
&= \int_{C_{a,b}'[0,T]}\exp\bigg\{\sum_{j=1}^2i(A_j^{1/2}w,x_j)^{\sim}\bigg\} 
d \sigma^{\vec A,\vec g}(w)
\end{split}
\end{equation}
where the complex measure $\sigma^{\vec A,\vec g}$ 
is defined by
\[
\sigma^{\vec A,\vec g}(B)
= \int_{B}\Big[\sum_{k=1}^2 i(A_{k}^{1/2}w,g_k)_{C_{a,b}'} \Big]df(w), 
 \quad B \in \mathcal{B}(C_{a,b}'[0,T]).
\]
The second equality of \eqref{eq:evaluation-delta}
follows from \eqref{eq:finite102} 
and Theorem 2.27 in \cite{Folland}.
Also, $\delta F(x_1,x_2|g_1,g_2)$ is an element 
of $\mathcal{F}_{A_1,A_2}^{\,\,a,b}$
as a functional of $(x_1,x_2)$,
since by the Cauchy-Schwartz inequality and \eqref{eq:finite102},
\[
\begin{split}
\|\sigma^{\vec A,\vec g}\|
&\le \int_{C_{a,b}'[0,T]}\sum_{j=1}^2|i(A_j^{1/2}w,g_j)_{C_{a,b}'}|d|f|(w)\\
&\le \int_{C_{a,b}'[0,T]}\sum_{j=1}^2
   \|A_j^{1/2}\|_{o}\|w\|_{C_{a,b}'}\|g_j\|_{C_{a,b}'}d|f|(w)\\
&\le\bigg(\sum_{j=1}^2\|A_j^{1/2}\|_{o}\|g_j\|_{C_{a,b}'}\bigg)
   \int_{C_{a,b}'[0,T]} \|w\|_{C_{a,b}'} d|f|(w) <+\infty,
\end{split}
\]
where $\|A_j^{1/2}\|_o$ is the operator norm of $A_j^{1/2}$.

\par
For given positive real number $q_0$, 
we  define a subclass $\mathcal{G}_{A_1,A_2}^{\, \,q_0}$ 
of  $\mathcal{F}_{A_1,A_2}^{\,\,a,b}$ by
$F\in \mathcal{G}_{A_1,A_2}^{\,\, q_0}$  
if and only if 
\[
\int_{C_{a,b}'[0,T]}\|w\|_{C_{a,b}'}
k(q_0;\vec A;w)d|f|(w)<+\infty   
\]
where $f$, the associated measure of $F$, 
and $F$ are  related by equation \eqref{eq:element2}
and $k(q_0; \vec A; w)$ is given by equation \eqref{eq:Domi-1}.

\par
Our next two theorems follows quite readily 
from  the techniques developed in 
Sections \ref{sec:3} and \ref{sec:4} above.

\begin{theorem}\label{thm:limit-1st}
Let $q_{0}$ be a positive real number and 
let $g_1$ and $g_2$ be elements of $C_{a,b}'[0,T]$.
Let $F$ be an element of $\mathcal{G}_{A_1,A_2}^{\, \,q_0}$
and let $\Gamma_{q_0}$ be given by \eqref{eq:domain}.
Then: 

{\rm (1)}
for all real numbers $q_1$ and $q_2$ with $|q_j|>  q_{0}$, 
$j\in\{1,2\}$  and all $p\in[1,2]$,
the $L_p$   analytic GFFT of $\delta F(\,\cdot\,,\cdot\,|g_1,g_2)$,
exists, is an element of $\mathcal{F}_{A_1,A_2}^{\,\,a,b}$ 
and is given by the formula
\[
\begin{split}
&T_{\vec q}^{(p)}(\delta F(\,\cdot\,,\cdot\,|g_1,g_2))(y_1,y_2) \\
&= \int_{C_{a,b}'[0,T]}\Big[\sum_{j=1}^2i(A_{j}^{1/2}w,g_j)_{C_{a,b}'}\Big]
 \exp\bigg\{\sum_{j=1}^2i(A_{j}^{1/2}w,y_j)^{\sim}\bigg\}
   \psi(-i\vec q;\vec A;w)df(w)
\end{split}
\]
for s-a.e. $(y_1,y_2) \in C_{a,b}^2[0,T]$,
where $\psi$ is given by equation \eqref{eq:h-function};
and

{\rm (2)}
for all real numbers $q_1$ and $q_2$ with $|q_j|> q_{0}$, $j\in\{1,2\}$,
the generalized analytic Feynman integral   
of $\delta F( \cdot ,\cdot |g_1,g_2)$ exists 
and  is given by the formula
\begin{equation}\label{eq:gfi-delta}
\begin{split}
&E_{\vec x}^{\text{\rm anf}_{\vec q}} [\delta F(x_1,x_2|g_1,g_2)]\\
&\qquad = \int_{C_{a,b}'[0,T]}\Big[\sum_{j=1}^2i(A_{j}^{1/2}w,g_j)_{C_{a,b}'}\Big]
 \psi(-i\vec q;\vec A;w)df(w).
\end{split}
\end{equation}
In addition, 
for each $\vec\lambda \in \hbox{\rm Int}(\Gamma_{q_0})$,
\[
\begin{split}
&E_{\vec x}^{\text{\rm an}_{\vec\lambda}}[\delta F(x_1,x_2|g_1,g_2)] \\
&\qquad
 = \int_{C_{a,b}'[0,T]}\Big[\sum_{j=1}^2i(A_{j}^{1/2}w,g_j)_{C_{a,b}'}\Big]
    \psi(\vec\lambda;\vec A;w)df(w).
\end{split}
\]
\end{theorem}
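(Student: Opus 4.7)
The plan is to recognize that $\delta F(\,\cdot\,,\cdot\,|g_1,g_2)$ is itself an element of the generalized Fresnel type class $\mathcal{F}_{A_1,A_2}^{\,\,a,b}$ which lies in the subclass $\mathcal{F}_{A_1,A_2}^{\,\,q_0}$, so that Theorems \ref{thm:t1q}, \ref{thm:tpq} and the corollaries of Section \ref{sec:3} apply directly. The representation of $\delta F$ as a Fresnel-type integral has already been set up in equation \eqref{eq:evaluation-delta}, where the associated measure $\sigma^{\vec A,\vec g}\in \mathcal{M}(C_{a,b}'[0,T])$ has the explicit form
\[
d\sigma^{\vec A,\vec g}(w)=\Big[\sum_{k=1}^2 i(A_k^{1/2}w,g_k)_{C_{a,b}'}\Big]\,df(w).
\]

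First I would verify that $\delta F(\,\cdot\,,\cdot\,|g_1,g_2)$ satisfies the integrability condition defining $\mathcal{F}_{A_1,A_2}^{\,\,q_0}$; that is, that
\[
\int_{C_{a,b}'[0,T]}k(q_0;\vec A;w)\,d|\sigma^{\vec A,\vec g}|(w)<+\infty.
\]
Using the Cauchy--Schwartz inequality, the operator norm bound $|(A_j^{1/2}w,g_j)_{C_{a,b}'}|\le \|A_j^{1/2}\|_{o}\|w\|_{C_{a,b}'}\|g_j\|_{C_{a,b}'}$, and the total variation estimate on $\sigma^{\vec A,\vec g}$, the integral is bounded by
\[
\Big(\sum_{j=1}^2\|A_j^{1/2}\|_{o}\|g_j\|_{C_{a,b}'}\Big)\int_{C_{a,b}'[0,T]}\|w\|_{C_{a,b}'}k(q_0;\vec A;w)\,d|f|(w),
\]
which is finite by the hypothesis $F\in \mathcal{G}_{A_1,A_2}^{\,\,q_0}$.

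Next I would invoke Theorems \ref{thm:t1q} and \ref{thm:tpq} applied to $\delta F(\,\cdot\,,\cdot\,|g_1,g_2)$ with associated measure $\sigma^{\vec A,\vec g}$. This immediately yields the existence of the $L_p$ analytic GFFT for each $p\in[1,2]$ and the formula
\[
T_{\vec q}^{(p)}(\delta F(\,\cdot\,,\cdot\,|g_1,g_2))(y_1,y_2)
=\int_{C_{a,b}'[0,T]}\exp\Big\{\sum_{j=1}^2 i(A_j^{1/2}w,y_j)^{\sim}\Big\}\psi(-i\vec q;\vec A;w)\,d\sigma^{\vec A,\vec g}(w).
\]
Substituting the density of $\sigma^{\vec A,\vec g}$ with respect to $f$ produces the claimed formula in part (1). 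Part (2) follows by combining part (1) with the identity \eqref{eq:t1q0} (setting $(y_1,y_2)=(0,0)$), or equivalently by applying Corollary \ref{coro:t1q-feynman} directly to $\delta F$. The analytic function space integral formula on $\hbox{\rm Int}(\Gamma_{q_0})$ follows analogously from the corollary concerning $E^{\text{\rm an}_{\vec\lambda}}[F]$ at the end of Section \ref{sec:3}. Finally, the conclusion that $T_{\vec q}^{(p)}(\delta F(\,\cdot\,,\cdot\,|g_1,g_2))$ itself lies in $\mathcal{F}_{A_1,A_2}^{\,\,a,b}$ follows from Remark \ref{re:remark}(2) applied to $\delta F$.

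There is no genuine obstacle here: the only substantive check is the integrability bound, and the rest is a mechanical application of the already-established machinery. The only point requiring a small amount of care is tracking that differentiation under the integral sign in \eqref{eq:evaluation-delta} is justified by the strengthened condition \eqref{eq:finite102}, which is implied by $F\in\mathcal{G}_{A_1,A_2}^{\,\,q_0}$ since $k(q_0;\vec A;w)\ge 1$.
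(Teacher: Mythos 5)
Your proposal is correct and follows exactly the route the paper intends: the paper offers no written proof beyond the remark that the result "follows quite readily from the techniques developed in Sections \ref{sec:3} and \ref{sec:4}," and your argument supplies precisely those details --- representing $\delta F(\,\cdot\,,\cdot\,|g_1,g_2)$ via the measure $\sigma^{\vec A,\vec g}$ from \eqref{eq:evaluation-delta}, checking that it lies in $\mathcal{F}_{A_1,A_2}^{\,\,q_0}$ using the defining condition of $\mathcal{G}_{A_1,A_2}^{\,\,q_0}$, and then invoking Theorems \ref{thm:t1q} and \ref{thm:tpq}, equation \eqref{eq:t1q0}, and the corollaries of Section \ref{sec:3}. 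The integrability estimate and the observation that $k(q_0;\vec A;w)\ge 1$ justifies \eqref{eq:finite102} are both accurate, so no gap remains.
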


\begin{theorem}
Let $q_0$, $\{e_n\}_{n= 1}^{\infty}$,  
$\{(\lambda_{1,n},\lambda_{2,n})\}_{n=1}^{\infty}$ 
and $(q_1,q_2)$ be as in Theorem \ref{thm:limit} above, 
and let $g_1$ and $g_2$ be elements of $C_{a,b}'[0,T]$.
Let $F$ be an element of $\mathcal G_{A_1,A_2}^{\,\,q_0}$
and let $\Gamma_{q_0}$ be given by \eqref{eq:domain}.
Then:

{\rm (1)}
for  all $p\in[1,2]$,
\[
\begin{split}
&T_{\vec q}^{(p)}(\delta F(\,\cdot\,,\cdot\,| g_1,g_2))(y_1,y_2)\\
&=\lim_{n \to \infty} \lambda_{1,n}^{n/2} \lambda_{2,n}^{n/2} 
E_{\vec x} [G_n(\lambda_{1,n} ,x_1)G_n(\lambda_{2,n} ,x_2)   
 \delta F(y_1+x_1,y_2+x_2|g_1,g_2) ]
\end{split}
 \]
for s-a.e. $(y_1,y_2)\in C_{a,b}^2[0,T]$, 
where $G_n$ is given by equation  \eqref{eq:gn} above;

{\rm (2)}
the generalized analytic Feynman integral 
$E^{\text{\rm anf}_{\vec q}}[\delta F(\cdot,\cdot|g_1,g_2)]$
of $\delta F( \cdot ,\cdot |g_1,g_2)$ 
is expressed as follows:
\[
\begin{split}
&E_{\vec x}^{\text{\rm anf}_{\vec q}}[\delta F(x_1,x_2|g_1,g_2)]\\
&=\lim_{n \to \infty} \lambda_{1,n}^{n/2} \lambda_{2,n}^{n/2} 
E_{\vec x} [G_n(\lambda_{1,n} ,x_1)G_n(\lambda_{2,n} ,x_2)   
    \delta  F(x_1,x_2|g_1,g_2) ].
\end{split}
 \]
Also  for each $\vec\lambda \in \hbox{\rm Int}(\Gamma_{q_0})$
and all sequence $\{(\lambda_{1,n},\lambda_{2,n})\}_{n=1}^{\infty}$
in $\mathbb C_+^2$ which converges to $\vec \lambda$,
\begin{equation*} 
\begin{split}
&E_{\vec x}^{\text{\rm an}_{\vec \lambda}}
   [\delta F(x_1,x_2|g_1,g_2) ]\\
&=\lim_{n \to \infty} 
\lambda_{1,n}^{n/2} \lambda_{2,n}^{n/2} 
E_{\vec x} [ G_n(\lambda_{1,n} ,x_1)G_n(\lambda_{2,n} ,x_2) 
\delta F(x_1,x_2|g_1,g_2)],
\end{split}
\end{equation*}
and

{\rm (3)}
for any $\rho_1 > 0$ and $\rho_2>0$, 
\begin{equation*}
E_{\vec x} [\delta F(\rho_1 x_1,\rho_2 x_2|g_1,g_2)]
= \lim_{n \to \infty} \rho_1^{-n}  \rho_2^{-n}  
E_{\vec x}[ G_n(\rho_1^{-2},x_1)G_n(\rho_2^{-2},x_2) F(x_1,x_2)].
\end{equation*}
\end{theorem}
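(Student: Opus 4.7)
The plan is to observe that $\delta F(\,\cdot\,,\cdot\,|\,g_1,g_2)$ is itself a member of the subclass $\mathcal F_{A_1,A_2}^{\,\,q_0}$ of the generalized Fresnel type class, after which every assertion reduces to a direct application of the corresponding result of Section \ref{sec:4} with $\delta F$ playing the role of $F$.

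To implement this, first invoke equation \eqref{eq:evaluation-delta}: since the hypothesis $F\in\mathcal G_{A_1,A_2}^{\,\,q_0}$ easily implies \eqref{eq:finite102}, that calculation is legitimate and gives the representation
\[
\delta F(x_1,x_2\,|\,g_1,g_2) = \int_{C_{a,b}'[0,T]} \exp\bigg\{\sum_{j=1}^2 i(A_j^{1/2}w,x_j)^{\sim}\bigg\} d\sigma^{\vec A,\vec g}(w),
\]
with $\sigma^{\vec A,\vec g}$ as in Theorem \ref{thm:limit-1st}. Hence $\delta F(\,\cdot\,,\cdot\,|\,g_1,g_2)\in\mathcal F_{A_1,A_2}^{\,\,a,b}$. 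The crucial step is to upgrade this to membership in the subclass $\mathcal F_{A_1,A_2}^{\,\,q_0}$. Using the pointwise bound
\[
d|\sigma^{\vec A,\vec g}|(w) \le \bigg(\sum_{j=1}^2 \|A_j^{1/2}\|_o\|g_j\|_{C_{a,b}'}\bigg)\|w\|_{C_{a,b}'}\,d|f|(w),
\]
the required integrability $\int_{C_{a,b}'[0,T]} k(q_0;\vec A;w)\,d|\sigma^{\vec A,\vec g}|(w)<+\infty$ collapses to the defining property of $\mathcal G_{A_1,A_2}^{\,\,q_0}$.

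With this in hand, part (1) is Theorem \ref{thm:limit} applied to $\delta F$ in place of $F$; the first display of part (2) follows from part (1) by setting $(y_1,y_2)=(0,0)$ and invoking \eqref{eq:t1q0}, while the analytic-extension display of part (2) is Corollary \ref{thm:limit2} applied to $\delta F$; and part (3) is obtained by specializing Theorem \ref{thm:change-main} with $\lambda_j=\rho_j^{-2}$ for $j\in\{1,2\}$ to the functional $\delta F$. The only genuine work is the integrability verification above — the analytic-continuation, dominated-convergence and Parseval's-identity machinery has already been carried out once and for all in Section \ref{sec:4} at the level of arbitrary elements of $\mathcal F_{A_1,A_2}^{\,\,q_0}$, so no new analytic argument is required and the main obstacle is simply bookkeeping the reduction correctly.
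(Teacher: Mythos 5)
Your proposal is correct and is exactly the reduction the paper intends: the paper gives no explicit proof, asserting only that the result ``follows quite readily'' from Sections \ref{sec:3}--\ref{sec:5}, and the key step you supply --- the bound $d|\sigma^{\vec A,\vec g}|(w)\le \big(\sum_{j=1}^2\|A_j^{1/2}\|_o\|g_j\|_{C_{a,b}'}\big)\|w\|_{C_{a,b}'}\,d|f|(w)$, which together with the defining property of $\mathcal G_{A_1,A_2}^{\,\,q_0}$ places $\delta F(\,\cdot\,,\cdot\,|g_1,g_2)$ in $\mathcal F_{A_1,A_2}^{\,\,q_0}$ --- is precisely what is needed to invoke Theorem \ref{thm:limit}, Corollary \ref{thm:limit2} and Theorem \ref{thm:change-main} with $\delta F$ in place of $F$. (Your version of part (3) also silently corrects what appears to be a typo in the paper's statement, where the right-hand side should contain $\delta F(x_1,x_2|g_1,g_2)$ rather than $F(x_1,x_2)$.)
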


\setcounter{equation}{0}
\section{Functionals in   $\mathcal{F}_{A_1,A_2}^{\,\,a,b}$}\label{sec:6}

In this section, we first prove  a theorem 
ensuring that various functionals 
are in $\mathcal{F}_{A_1,A_2}^{\,\,a,b}$.

\begin{theorem}\label{thm:potential}
Let $A_1$ and $A_2$ be bounded, nonnegative,  
self-adjoint operators on $C_{a,b}'[0,T]$.
Let $(Y, \mathcal Y, \gamma)$ be a $\sigma$-finite measure space
and let
$\varphi_l: Y\to C_{a,b}'[0,T]$ be 
$\mathcal Y$--$\mathcal{B}(C_{a,b}'[0,T])$
measurable for $l\in\{1,\ldots, d\}$.
Let $\theta:  Y\times \mathbb R^d\to\mathbb C$ be given by
$\theta (\eta;\cdot)=\widehat \nu_{\eta} (\cdot)$
where $\nu_{\eta}\in \mathcal{M}(\mathbb R^d)$ for every $\eta\in Y$
and where the family
$\{\nu_{\eta}:\eta\in Y\}$ satisfies:
\begin{enumerate}
   \item[(i)]
$\nu_\eta(E)$ is a $\mathcal{Y}$-measurable function 
of $\eta$ for every $E\in \mathcal{B}(\mathbb R^d)$; and
   \item[(ii)] 
$\|\nu_{\eta}\| \in L^1(Y,\mathcal Y, \gamma)$.
\end{enumerate}

\par
Under these hypothesis, the functional $F: C_{a,b}^2[0,T]\to\mathbb C$
given by 
\begin{equation}\label{eq:A-function}
F(x_1,x_2)
=\int_{Y}\theta\bigg(\eta; \sum_{j=1}^2  (A_j^{1/2}\varphi_1(\eta),x_j)^{\sim},
 \ldots, \sum_{j=1}^2  (A_j^{1/2}\varphi_d(\eta),x_j)^{\sim}\bigg)d\gamma(\eta)
\end{equation}
belongs to $\mathcal{F}_{A_1,A_2}^{\,\,a,b}$ 
and satisfies the inequality
\begin{equation*}
\|F\|\le \int_{Y}\|\nu_{\eta}\| d \gamma(\eta).
\end{equation*}
\end{theorem}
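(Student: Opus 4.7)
The plan is to exhibit $F$ in the form \eqref{eq:element2} by unfolding the Fourier transform definition of $\theta$, pulling the finite linear combination inside the stochastic inner product, and then disintegrating the parametrized family $\{\nu_\eta\}$ into a single complex Borel measure on $C_{a,b}'[0,T]$. First, since $\theta(\eta;\cdot)=\widehat{\nu}_\eta(\cdot)$, I will write
\[
\theta\bigl(\eta;u_1,\ldots,u_d\bigr)
=\int_{\mathbb R^d}\exp\Bigl\{i\sum_{l=1}^d u_l v_l\Bigr\}\,d\nu_\eta(\vec v),
\]
substitute $u_l=\sum_{j=1}^{2}(A_j^{1/2}\varphi_l(\eta),x_j)^{\sim}$, and exploit linearity of both $A_j^{1/2}$ and the PWZ stochastic integral $(\cdot,\cdot)^{\sim}$ to collapse the exponent to $i\sum_{j=1}^{2}(A_j^{1/2}w(\eta,\vec v),x_j)^{\sim}$, where $w(\eta,\vec v):=\sum_{l=1}^{d}v_l\,\varphi_l(\eta)$, an element of $C_{a,b}'[0,T]$ because the latter is a (Hilbert) vector space.

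Next I will construct the measure. Define $\Psi:Y\times\mathbb R^d\to C_{a,b}'[0,T]$ by $\Psi(\eta,\vec v)=w(\eta,\vec v)$; joint $\mathcal Y\otimes\mathcal B(\mathbb R^d)$--$\mathcal B(C_{a,b}'[0,T])$ measurability follows from the hypothesized measurability of each $\varphi_l$ together with continuity of the map $\vec v\mapsto\sum_l v_l\,\varphi_l(\eta)$ and a standard approximation by simple functions in $\vec v$. Using hypotheses (i) and (ii), I will assemble the parametrized family into a single complex Borel measure $\tau$ on $Y\times\mathbb R^d$ via
\[
\tau(E)=\int_{Y}\nu_\eta(E_\eta)\,d\gamma(\eta),\qquad E\in\mathcal Y\otimes\mathcal B(\mathbb R^d),
\]
with total variation bound $\|\tau\|\le\int_Y\|\nu_\eta\|\,d\gamma(\eta)<\infty$ coming directly from hypothesis (ii). Then $f:=\tau\circ\Psi^{-1}$ belongs to $\mathcal M(C_{a,b}'[0,T])$ and satisfies $\|f\|\le\|\tau\|$.

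Finally, combining the change-of-variables formula for the pushforward $f=\tau\circ\Psi^{-1}$ with Fubini applied to $\tau$ yields
\[
\int_{C_{a,b}'[0,T]}\exp\Bigl\{\sum_{j=1}^{2}i(A_j^{1/2}w,x_j)^{\sim}\Bigr\}\,df(w)
=F(x_1,x_2)
\]
for s-a.e.\ $(x_1,x_2)\in C_{a,b}^2[0,T]$, which is precisely the representation \eqref{eq:element2}; the norm estimate $\|F\|\le\|f\|\le\int_Y\|\nu_\eta\|\,d\gamma(\eta)$ then drops out of $\|F\|=\|f\|$. The main technical obstacle is the measure-theoretic step: establishing that $\tau$ is a well-defined countably additive complex measure with the claimed variation bound, since the $\nu_\eta$ are \emph{complex} measures depending only measurably on the parameter $\eta$. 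I will handle this via the Jordan decomposition $\nu_\eta=(\nu_\eta^{1,+}-\nu_\eta^{1,-})+i(\nu_\eta^{2,+}-\nu_\eta^{2,-})$, verifying through a monotone class argument that each $\eta\mapsto\nu_\eta^{k,\pm}(E)$ remains $\mathcal Y$-measurable for every $E\in\mathcal B(\mathbb R^d)$ and that $\eta\mapsto\|\nu_\eta^{k,\pm}\|$ is integrable against $\gamma$, applying the classical product construction to produce four positive finite measures on $Y\times\mathbb R^d$, and recombining. Once $\tau$ is in hand, the remaining steps reduce to routine applications of Fubini and the change of variables formula.
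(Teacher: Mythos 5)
Your proposal follows essentially the same route as the paper's proof: disintegrate the family $\{\nu_\eta\}$ into a single measure $\tau$ on $Y\times\mathbb{R}^d$ with $\|\tau\|\le\int_Y\|\nu_\eta\|\,d\gamma(\eta)$, push it forward under $(\eta,\vec v)\mapsto\sum_{l=1}^d v_l\varphi_l(\eta)$ to get $f\in\mathcal{M}(C_{a,b}'[0,T])$, and verify the representation \eqref{eq:element2} by change of variables and Fubini. The only difference is that the paper delegates the construction of $\tau$ and the Fubini step to Theorem 3.1 of Johnson--Skoug \cite{JS83} (and the measurability preliminaries to \cite{CJS87}), whereas you propose to reprove these inline via Jordan decomposition and a monotone class argument, which is a legitimate but redundant elaboration of the same idea.
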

\begin{proof}
Using the techniques similar to those used in  \cite{CJS87},
we can show that $\|\nu_{\eta}\|$ is measurable as a function of $\eta$,
that $\theta$ is $\mathcal{Y}$-measurable, 
and that the integrand in equation 
\eqref{eq:A-function} is a measurable function of $\eta$
for every $(x_1,x_2) \in C_{a,b}^2[0,T]$.

\par
We  define a measure $\tau$ 
on $\mathcal Y\times \mathcal{B}(\mathbb R^d)$ by
\[
\tau(E)=\int_{Y}\nu_{\eta}(E^{(\eta)})d\gamma(\eta),
\quad\hbox{for}\,\, E\in \mathcal Y\times\mathcal{B}(\mathbb R^d).
\]
Then by  the first assertion of Theorem 3.1 in \cite{JS83}, 
$\tau$ satisfies $\|\tau\|\le\int_Y\|\nu_{\eta}\|d \gamma(\eta)$.
Now let
$\Phi:Y\times\mathbb R^d\to C_{a,b}'[0,T]$ be defined by 
$\Phi(\eta; v_1,\ldots, v_d)=\sum_{l=1}^dv_l\varphi_l(\eta)$.
Then $\Phi$ is 
$\mathcal{Y}\times\mathcal{B}(\mathbb R^d)$--$\mathcal{B}(C_{a,b}'[0,T])$-measurable
on the hypothesis for $\varphi_l$, $l\in\{1,\ldots,d\}$.
Let   $\sigma=\tau\circ \Phi^{-1}$.
Then clearly $\sigma \in \mathcal{M}(C_{a,b}'[0,T])$ 
and satisfies $\|\sigma\|\le\|\tau\|$.

\par
From the  change of variables theorem 
and the second assertion  of Theorem 3.1 in \cite{JS83},
it follows that
for a.e. $(x_1,x_2)\in C_{a,b}^2[0,T]$ 
and for every $\rho_1>0$ and $\rho_2>0$,
\[
\begin{split}
&F(\rho_1x_1,\rho_2x_2)\\
&=\int_{Y}\widehat\nu_{\eta}
  \bigg(  \sum_{j=1}^2  (A_j^{1/2}\varphi_1(\eta),\rho_j x_j)^{\sim},
 \ldots,  \sum_{j=1}^2  (A_j^{1/2}\varphi_d(\eta),\rho_j x_j)^{\sim}  
 \bigg)d\gamma(\eta)\\
&=\int_Y\bigg[\int_{\mathbb R^d} \exp\bigg\{ i\sum_{l=1}^d v_l
\bigg[ \sum_{j=1}^2  ( A_j^{1/2} \varphi_l (\eta), \rho_j x_j )^{\sim} \bigg]\bigg\}
 d\nu_{\eta}(v_1,\ldots,v_d)\bigg]d\gamma(\eta)\\
&=\int_{Y\times\mathbb R^d} \exp\bigg\{ i\sum_{l=1}^d v_l
\bigg[ \sum_{j=1}^2  ( A_j^{1/2} \varphi_l (\eta), \rho_j x_j )^{\sim} \bigg]\bigg\}
 d \tau(\eta; v_1,\ldots,v_d)\\
\end{split}
\]
\[
\begin{split}
&=\int_{Y\times\mathbb R^d}\exp\bigg\{ \sum_{j=1}^2 i(A_j^{1/2}
\Phi(\eta; v_1,\ldots,v_d), \rho_jx_j)^{\sim} \bigg\}
d \tau(\eta; v_1,\ldots,v_d)\\
&=\int_{C_{a,b}'[0,T]}\exp\bigg\{ \sum_{j=1}^2 
i(A_j^{1/2}w,\rho_jx_j)^{\sim}\bigg\}d\tau\circ\Phi^{-1}(w)\\
&=\int_{C_{a,b}'[0,T]}\exp\bigg\{ \sum_{j=1}^2 
i(A_j^{1/2}w,\rho_jx_j)^{\sim}\bigg\}d\sigma(w).
\end{split}
\]
Thus the functional  $F$  given by equation \eqref{eq:A-function} 
belongs to $\mathcal{F}_{A_1,A_2}^{\,\,a,b}$
and satisfies the inequality
\[
\|F\|=\|\sigma\|\le\|\tau\|\le\int_{Y}\|\nu_{\eta}\|d\gamma(\eta).
\]
\end{proof}

\par 
As mentioned in (2) of Remark \ref{re:A}, 
$\mathcal{F}_{A_1,A_2}^{\,\,a,b}$ is a Banach algebra
if $\text{\rm  Ran}(A_1+A_2)$ is dense in $C_{a,b}'[0,T]$.
In this case,  many analytic functionals of $F$ can be formed.
The following corollary is  relevant to Feynman integration theories
and quantum mechanics where exponential functions 
play an important role.

\begin{corollary}
Let $A_1$ and $A_2$ be  bounded, nonnegative and 
  self-adjoint operators on $C_{a,b}'[0,T]$
such that 
$\text{\rm  Ran}(A_1+A_2)$ is dense in $C_{a,b}'[0,T]$.
Let $F$ be given by equation \eqref{eq:A-function}
with $\theta$ as in Theorem \ref{thm:potential},
and let $\beta:\mathbb C\to \mathbb C$ be an entire function.
Then $(\beta\circ F)(x_1,x_2)$ is in $\mathcal{F}_{A_1,A_2}^{\,\,a,b}$.
In particular, $\exp\{F(x_1,x_2)\}\in \mathcal{F}_{A_1,A_2}^{\,\,a,b}$.
\end{corollary}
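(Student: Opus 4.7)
The plan is to exploit the Banach algebra structure of $\mathcal{F}_{A_1,A_2}^{\,\,a,b}$ (granted by the density of $\mathrm{Ran}(A_1+A_2)$, via Remark \ref{re:A}(2)) together with the power series expansion of the entire function $\beta$. First, by Theorem \ref{thm:potential}, the functional $F$ given by \eqref{eq:A-function} lies in $\mathcal{F}_{A_1,A_2}^{\,\,a,b}$ and satisfies $\|F\| \le M$ where $M := \int_{Y} \|\nu_\eta\|\,d\gamma(\eta) < \infty$. Moreover, every $H \in \mathcal{F}_{A_1,A_2}^{\,\,a,b}$ satisfies the pointwise bound $|H(x_1,x_2)| \le \|H\|$ for s-a.e.\ $(x_1,x_2)$, because the integrand in \eqref{eq:element2} has modulus $1$.

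Next, write $\beta(z) = \sum_{n=0}^{\infty} c_n z^n$; since $\beta$ is entire, $\sum_{n=0}^{\infty} |c_n| r^n < \infty$ for every $r \ge 0$, in particular for $r = M$. The Banach algebra property gives $F^n \in \mathcal{F}_{A_1,A_2}^{\,\,a,b}$ with $\|F^n\| \le \|F\|^n \le M^n$, and the constant functional $1$ lies in $\mathcal{F}_{A_1,A_2}^{\,\,a,b}$ (as the stochastic Fourier transform of the Dirac measure at the origin). Hence the partial sums $S_N := \sum_{n=0}^{N} c_n F^n$ form a Cauchy sequence in $\mathcal{F}_{A_1,A_2}^{\,\,a,b}$, which by completeness converges in norm to some $G \in \mathcal{F}_{A_1,A_2}^{\,\,a,b}$.

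To conclude $G \approx \beta \circ F$, I would use the pointwise bound from the first step: norm convergence $S_N \to G$ forces uniform s-a.e.\ convergence $S_N(x_1,x_2) \to G(x_1,x_2)$. On the other hand, since $|F(x_1,x_2)| \le M$ s-a.e., the power series for $\beta$ gives $S_N(x_1,x_2) = \sum_{n=0}^{N} c_n F(x_1,x_2)^n \to \beta(F(x_1,x_2))$ pointwise s-a.e. Uniqueness of limits yields $G \approx \beta \circ F$, so $\beta \circ F \in \mathcal{F}_{A_1,A_2}^{\,\,a,b}$. The specialization $\beta(z) = e^z$ then gives the ``in particular'' assertion.

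The main obstacle is not technical depth but rather reconciling two modes of convergence: Banach-algebra norm convergence of the partial sums (which delivers membership in the class) and pointwise s-a.e.\ convergence to $\beta \circ F$ (which identifies the limit). The pointwise domination $|H| \le \|H\|$ available to every element of $\mathcal{F}_{A_1,A_2}^{\,\,a,b}$ bridges the gap, since it turns norm convergence into uniform convergence s-a.e. Everything else reduces to submultiplicativity of the norm and completeness, both already in hand.
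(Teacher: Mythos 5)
Your proof is correct and follows exactly the route the paper intends: the corollary is stated without proof immediately after the observation that $\mathcal{F}_{A_1,A_2}^{\,\,a,b}$ is a Banach algebra (with pointwise multiplication corresponding to convolution of measures) when $\text{\rm Ran}(A_1+A_2)$ is dense, and your power-series argument --- submultiplicativity, completeness, and the pointwise bound $|H(x_1,x_2)|\le\|H\|$ to identify the norm limit with $\beta\circ F$ s-a.e. --- is the standard holomorphic functional calculus that justifies it. The only cosmetic flaw is the clash of $M$ as both the bound on $\|F\|$ and an index in the Cauchy estimate.
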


\begin{corollary}
Let $A_1$ and $A_2$ be  bounded, nonnegative, 
self-adjoint operators on $C_{a,b}'[0,T]$,
and let $\{g_1,\ldots, g_d\}$ be a finite subset of $C_{a,b}'[0,T]$.
Given $\beta=\widehat{\nu}$ where $\nu\in \mathcal{M}(\mathbb R^d)$, 
define  $F :C_{a,b}^2[0,T]\to\mathbb C$ by
\[
F(x_1,x_2)=\beta\bigg(\sum_{j=1}^2(A_j^{1/2}g_1,x_j)^{\sim},
  \ldots, \sum_{j=1}^2(A_j^{1/2}g_d,x_j)^{\sim}\bigg).
\]
Then $F $ is  an element of $\mathcal{F}_{A_1,A_2}^{\,\,a,b}$.
\end{corollary}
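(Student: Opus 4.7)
The plan is to obtain this corollary as a direct specialization of Theorem \ref{thm:potential} by collapsing the parameter space $Y$ to a single point. Specifically, I would take $Y = \{\eta_0\}$ (a singleton), $\mathcal{Y} = \{\emptyset, Y\}$, and $\gamma$ the unit point mass at $\eta_0$, which is trivially $\sigma$-finite.

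Next, I would define the maps $\varphi_l : Y \to C_{a,b}'[0,T]$ by setting $\varphi_l(\eta_0) = g_l$ for each $l \in \{1, \ldots, d\}$. Since $Y$ contains only one point, every function on $Y$ is $\mathcal{Y}$-measurable, so the measurability hypothesis on the $\varphi_l$ is automatic. For the family of measures, I would set $\nu_{\eta_0} = \nu$, the given measure on $\mathbb{R}^d$. Then $\theta(\eta_0; u_1, \ldots, u_d) = \widehat{\nu_{\eta_0}}(u_1, \ldots, u_d) = \widehat{\nu}(u_1, \ldots, u_d) = \beta(u_1, \ldots, u_d)$. Conditions (i) and (ii) of Theorem \ref{thm:potential} are immediate: $\nu_\eta(E)$ is constant in $\eta$ (hence trivially measurable), and $\|\nu_\eta\| = \|\nu\| < \infty$ gives $\|\nu_\eta\| \in L^1(Y,\mathcal{Y},\gamma)$ since $\gamma(Y) = 1$.

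With these choices, the right-hand side of equation \eqref{eq:A-function} reduces to
\[
\int_Y \theta\bigg(\eta; \sum_{j=1}^2 (A_j^{1/2}\varphi_1(\eta),x_j)^{\sim}, \ldots, \sum_{j=1}^2 (A_j^{1/2}\varphi_d(\eta),x_j)^{\sim}\bigg) d\gamma(\eta) = \beta\bigg(\sum_{j=1}^2 (A_j^{1/2}g_1,x_j)^{\sim}, \ldots, \sum_{j=1}^2 (A_j^{1/2}g_d,x_j)^{\sim}\bigg),
\]
which is precisely $F(x_1,x_2)$. Theorem \ref{thm:potential} then yields $F \in \mathcal{F}_{A_1,A_2}^{\,\,a,b}$ along with the norm estimate $\|F\| \le \|\nu\|$. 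There is no real obstacle here; the entire proof is a one-line application of Theorem \ref{thm:potential} to the trivial measure space, and the corollary is essentially the prototypical example motivating the more general result.
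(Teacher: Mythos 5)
Your proposal is correct and is essentially the paper's own argument: the paper likewise applies Theorem \ref{thm:potential} with a probability space $(Y,\mathcal{Y},\gamma)$, constant maps $\varphi_l(\eta)\equiv g_l$, and the constant family $\nu_\eta\equiv\nu$, so that the $\gamma$-integral of the constant integrand returns $\beta$ evaluated at the same arguments. Your choice of a singleton with unit point mass is just one concrete instance of that probability space, so the two proofs coincide in substance.
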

\begin{proof}
Let $(Y,\mathcal{Y},\gamma)$ be a probability space
and for $l\in\{1,\ldots, d\}$, let $\varphi_l(\eta)\equiv g_l$.
Take $\theta(\eta;\cdot)=\beta(\cdot)=\widehat\nu(\cdot)$.
Then for all $\rho_1>0$ and $\rho_2>0$ and 
for a.e. $(x_1,x_2)\in C_{a,b}^2[0,T]$,
\[
\begin{split}
&\int_Y \theta\bigg(\eta; \sum_{j=1}^2  
(A_j^{1/2}\varphi_1(\eta),\rho_jx_j)^{\sim},\ldots,
 \sum_{j=1}^2  (A_j^{1/2}\varphi_d(\eta),\rho_jx_j)^{\sim}\bigg)
d\gamma(\eta)\\
&=\int_Y \beta\bigg( \sum_{j=1}^2  
(A_j^{1/2}g_1 ,\rho_jx_j)^{\sim},\ldots,
 \sum_{j=1}^2  (A_j^{1/2}g_d,\rho_jx_j)^{\sim}\bigg)d\gamma(\eta)\\
&= \beta\bigg( \sum_{j=1}^2  (A_j^{1/2}g_1 ,\rho_jx_j)^{\sim},
\ldots,  \sum_{j=1}^2  (A_j^{1/2}g_d,\rho_jx_j)^{\sim}\bigg) \\
&=F (\rho_1x_1,\rho_2x_2).
\end{split}
\]
Hence $F \in \mathcal{F}_{A_1,A_2}^{\,\,a,b}$.
\end{proof}

\begin{remark}
Let $d=1$  and let 
$(Y,\mathcal{Y},\gamma)=([0,T],\mathcal{B}([0,T]), m_L)$ 
in Theorem \ref{thm:potential}
where $m_L$ denotes   Lebesgue measure on $[0,T]$.
Then Theorems 4.6, 4.7 and 4.9 in \cite{CCL09}    
follows from the results in this section
by letting $A_1$ be the identity operator 
and letting $A_2\equiv0$ on $C_{a,b}'[0,T]$.
The function $\theta$ studied  in \cite{CCL09} 
(and  mentioned in (1) of Remark \ref{re:physics} above) 
is interpreted as the potential energy in   quantum mechanics.
\end{remark}

\setcounter{equation}{0}
\section{A translation theorem for the generalized analytic Feynman integral
of functionals in $\mathcal{F}_{A_1,A_2}^{\,\,a,b}$}\label{sec:7}

In \cite{CS82}, Cameron and Storvick derived  a translation theorem 
for the analytic Feynman integral of functionals 
in the Banach algebra $\mathcal S$ on classical Wiener space
and in \cite{CC96}, 
Chang and Chung derived a translation theorem 
for  function space integral of functionals on $C_{a,b}[0,T]$.
The translation theorem in \cite{CC96}, 
using the notation of this paper, 
states that if $x_0\in C_{a,b}'[0,T]$
and if $G$ is a $\mu$-integrable function on $C_{a,b}[0,T]$,
then
\begin{equation}\label{eq:translation}
E[G(x+x_0)]
=\exp\bigg\{-\frac12\|x_0\|^2_{C_{a,b}'}
   -(x_0,a)_{C_{a,b}'}\bigg\}E[G(x)\exp\{(x_0,x)^{\sim}\}].
\end{equation}

\par
In this section,   
we will present a generalized analytic Feynman integral version 
of the translation theorem for functionals in
$\mathcal{F}_{A_1,A_2}^{\,\,a,b}$.

\begin{theorem}\label{thm:tpq-trans}
Let $q_0$ and $F$ be as in Theorem \ref{thm:t1q}.
Let  $g_1$ and $g_2$ be elements of $C_{a,b}'[0,T]$. 
Then  for all $p\in[1,2]$, all real numbers $q_1$ 
and $q_2$ with $|q_j|> q_0$, $j\in\{1,2\}$
 and for s-a.e. $(y_1, y_2)\in C_{a,b}^2[0,T]$, 
\begin{equation}\label{eq:tpq-trans}
\begin{split}
&T_{\vec q}^{(p)}(F)(y_1+A_1^{1/2}g_1,y_2+A_2^{1/2}g_2)\\
&= \exp\bigg\{\sum\limits_{j=1}^2
    \bigg[\frac{iq_j}{2}(A_jg_j,g_j)_{C_{a,b}'}
    -(-iq_j)^{1/2} (A_j^{1/2}g_j,a)_{C_{a,b}'}\bigg]\bigg\}\\
&\quad\times 
\exp\bigg\{\sum_{j=1}^2iq_j(A_j^{1/2}g_j,y_j)^{\sim}\bigg\}
    T_{\vec q}^{(p)}(F^*)(y_1,y_2)\\
\end{split}
\end{equation} 
where
\[
F^*(y_1,y_2)
= F(y_1,y_2)\exp\bigg\{\sum\limits_{j=1}^2
\Big[-iq_j(A_j^{1/2}g_j,y_j)^{\sim}\Big]\bigg\}.
\]
\end{theorem}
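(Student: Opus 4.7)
The plan is to evaluate both sides of \eqref{eq:tpq-trans} separately and compare. The left-hand side is handled by the explicit formula \eqref{eq:tpq} of Theorems~\ref{thm:t1q} and \ref{thm:tpq}, while $T_{\vec q}^{(p)}(F^*)$ on the right-hand side is computed by a direct Fubini-and-Gaussian-moment-generating-function calculation, because $F^*$ is in general \emph{not} a member of $\mathcal F_{A_1,A_2}^{\,\,a,b}$, so the quoted theorems do not apply to it verbatim.

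First I substitute $y_j\mapsto y_j+A_j^{1/2}g_j$ into \eqref{eq:tpq}. Bilinearity of $(\cdot,\cdot)^{\sim}$, self-adjointness of $A_j^{1/2}$, and the identity $(u,v)^{\sim}=(u,v)_{C_{a,b}'}$ for $u,v\in C_{a,b}'[0,T]$ (established in the remark just before Definition~\ref{def:gfi}) produce an extra factor $\exp\{i\sum_{j=1}^{2}(A_jw,g_j)_{C_{a,b}'}\}$ inside the $df(w)$ integral. Next, for $\lambda_j>0$, I factor $\prod_j\exp\{-iq_j(A_j^{1/2}g_j,y_j)^{\sim}\}$ out of $F^*(y_1+\lambda_1^{-1/2}x_1,y_2+\lambda_2^{-1/2}x_2)$, substitute \eqref{eq:element2} for $F$, apply Fubini, and evaluate the Gaussian expectations using \eqref{eq:int-formula}; the result is
\[
\exp\Bigl\{i\lambda_j^{-1/2}(A_j^{1/2}(w-q_jg_j),a)_{C_{a,b}'}-\tfrac{1}{2\lambda_j}\|A_j^{1/2}(w-q_jg_j)\|_{C_{a,b}'}^{2}\Bigr\}.
\]
Expanding the norm and inner product bilinearly decomposes the exponent into the familiar $\log\psi(\vec\lambda;\vec A;w)$ plus three $g_j$-dependent corrections: $-i\lambda_j^{-1/2}q_j(A_j^{1/2}g_j,a)_{C_{a,b}'}$, $(q_j/\lambda_j)(A_jw,g_j)_{C_{a,b}'}$, and $-(q_j^{\,2}/(2\lambda_j))(A_jg_j,g_j)_{C_{a,b}'}$. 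Analytic continuation to $\hbox{\rm Int}(\Gamma_{q_0})$ is justified by Morera's theorem exactly as in the proof of Theorem~\ref{thm:t1q}, and the limit $\vec\lambda\to-i\vec q$ is taken by dominated convergence; the triangle inequality $\|A_j^{1/2}(w-q_jg_j)\|_{C_{a,b}'}\le\|A_j^{1/2}\|_{o}(\|w\|_{C_{a,b}'}+|q_j|\|g_j\|_{C_{a,b}'})$ shows that the integrand is bounded by a $w$-independent constant times $k(q_0;\vec A;w)$, which is $|f|$-integrable by \eqref{eq:finite000}. In the limit, the elementary identities $-iq_j\cdot(-iq_j)^{-1/2}=(-iq_j)^{1/2}$, $q_j/(-iq_j)=i$, and $-q_j^{\,2}/(2(-iq_j))=-iq_j/2$ convert the three corrections to $+(-iq_j)^{1/2}(A_j^{1/2}g_j,a)_{C_{a,b}'}$, $+i(A_jw,g_j)_{C_{a,b}'}$, and $-(iq_j/2)(A_jg_j,g_j)_{C_{a,b}'}$, respectively.

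Collecting everything, $T_{\vec q}^{(p)}(F^*)(y_1,y_2)$ equals the $w$-independent prefactor $\prod_j\exp\{-iq_j(A_j^{1/2}g_j,y_j)^{\sim}+(-iq_j)^{1/2}(A_j^{1/2}g_j,a)_{C_{a,b}'}-(iq_j/2)(A_jg_j,g_j)_{C_{a,b}'}\}$ times the integral representation of the left-hand side, and inverting this prefactor produces precisely the three exponential factors on the right-hand side of \eqref{eq:tpq-trans}. The $L_p$ convergence demanded by Definition~\ref{def:gfft} follows from the very same $w$-dominator (which does not involve $y$) via dominated convergence on the finite-measure space $C_{a,b}^2[0,T]$, in direct parallel with the proof of Theorem~\ref{thm:tpq}. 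The main obstacle is the bookkeeping: carefully tracking the branch conventions for $\lambda_j^{-1/2}$ and $(-iq_j)^{1/2}$ fixed in Definition~\ref{def:gfi} so that every sign is correct, and verifying that the dominating function for the shifted integrand can be chosen uniformly over a neighborhood of $-i\vec q$ in $\hbox{\rm Int}(\Gamma_{q_0})$ so that both Morera's theorem and dominated convergence apply cleanly.
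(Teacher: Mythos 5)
Your proof is correct, but it follows a genuinely different route from the paper's. The paper proves \eqref{eq:tpq-trans} by working at the level of function--space integrals: for $\lambda_1,\lambda_2>0$ it applies the Chang--Chung translation theorem \eqref{eq:translation} with $x_{0,j}=\lambda_j^{1/2}A_j^{1/2}g_j$ to convert $T_{\vec\lambda}(F)(y_1+A_1^{1/2}g_1,y_2+A_2^{1/2}g_2)$ into a prefactor times $E_{\vec x}[\Phi_1(y_1+\lambda_1^{-1/2}x_1,y_2+\lambda_2^{-1/2}x_2)]$, where $\Phi_1$ carries the weight $\exp\{\sum_j\lambda_j(A_j^{1/2}g_j,x_j)^{\sim}\}$, and then uses H\"older's inequality to show that $E_{\vec x}[\,|F^*(\cdots)-\Phi_1(\cdots)|\,]\to 0$ as $\vec\lambda\to-i\vec q$, so that the prefactor times $T_{\vec\lambda}(F^*)$ has the same limit. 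You instead compute closed-form integral representations of \emph{both} sides directly from \eqref{eq:element2} via the Gaussian moment-generating function, and match them algebraically; your exponent bookkeeping, the branch identities $-iq_j(-iq_j)^{-1/2}=(-iq_j)^{1/2}$, $q_j/(-iq_j)=i$, $-q_j^2/(2(-iq_j))=-iq_j/2$, and the dominator $C\,k(q_0;\vec A;w)$ obtained from $\|A_j^{1/2}(w-q_jg_j)\|_{C_{a,b}'}\le\|A_j^{1/2}\|_o(\|w\|_{C_{a,b}'}+|q_j|\|g_j\|_{C_{a,b}'})$ are all right. Your observation that $F^*$ need not lie in $\mathcal F_{A_1,A_2}^{\,\,a,b}$ (one would need a single $w$ with $A_j^{1/2}(w+q_jg_j)=0$ for both $j$) is also correct and is exactly why some extra work is needed for the right-hand side; the paper's H\"older argument and your direct Morera/dominated-convergence verification are two ways of supplying it. What each approach buys: the paper's route showcases the translation theorem and keeps the shift at the level of the underlying Gaussian measure, while yours is more self-contained, avoids the $\vec\lambda$-dependent auxiliary functional $\Phi_1$ and the limiting H\"older estimate, and yields as a by-product an explicit formula for $T_{\vec q}^{(p)}(F^*)$. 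The only point worth adding to your write-up is the (routine, but tacit) remark that substituting $y_j\mapsto y_j+A_j^{1/2}g_j$ into the s-a.e.\ identity \eqref{eq:tpq} is legitimate because translation by elements of $C_{a,b}'[0,T]$ preserves scale-invariant null sets, which again follows from \eqref{eq:translation}.
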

\begin{proof} 
By Theorems \ref{thm:t1q} and \ref{thm:tpq}, 
the $L_p$ analytic GFFT $T_{\vec q}^{(p)}(F)$ of $F$ 
exists for all $p\in [1,2]$ and 
is given by the right hand side of equation \eqref{eq:tpq}.
Thus we  only  need  to verify the equality 
in equation \eqref{eq:tpq-trans}.
We will give the proof for the case $p\in(1,2]$.
The case $p=1$ is similar, but somewhat easier.

\par
For $\lambda_j>0$, $j\in\{1,2\}$ and $w \in C_{a,b}'[0,T]$, 
let $G_j(w;\cdot)$ be a functional on $C_{a,b}[0,T]$ given by
\begin{equation}\label{eq:G1}
G_j(w;x_j )
=\exp\big\{i (A_j^{1/2}w,\lambda_j^{-1/2} x_j  )^{\sim}\big\}
\end{equation}
and  let 
\begin{equation}\label{eq:x00}
x_{0,1}=\lambda_1^{1/2}A_{1}^{1/2}g_1\,\, \hbox{  and } \,\,
x_{0,2}=\lambda_2^{1/2}A_{2}^{1/2}g_2.
\end{equation}
Then for $j\in\{1,2\}$,
\begin{equation}\label{eq:x01}
\|x_{0,j} \|_{C_{a,b}'}^2
=\lambda_j(A_j g_j,g_j)_{C_{a,b}'}
\,\,\hbox{ and }\,\, (
x_{0,j},a)_{C_{a,b}'}
=\lambda_j^{1/2}(A_j^{1/2}g_j,a)_{C_{a,b}'}.
\end{equation}

\par
Using   \eqref{eq:element2}, the Fubini theorem, 
\eqref{eq:x00}, \eqref{eq:G1}, \eqref{eq:translation} 
and \eqref{eq:x01},
we obtain that for $\lambda_1>0$ and $\lambda_2>0$,
\[
\begin{split}
&T_{\vec \lambda}(F)(y_1+A_1^{1/2}g_1,y_2+A_2^{1/2}g_2)\\
&=\int_{C_{a,b}'[0,T]}\exp\bigg\{  
   \sum_{j=1}^2i(A_j^{1/2}w,y_j)^{\sim}\bigg\}\\
&\qquad\times
\bigg(\prod_{j=1}^2E_{x_j} \Big[\exp\Big\{i\big(A_j^{1/2}w,
    \lambda_j^{-1/2}x_j +A_j^{1/2}g_j\big)^{\sim}\Big\}\Big]\bigg)d f(w)\\
&=\int_{C_{a,b}'[0,T]}
\exp\bigg\{  \sum_{j=1}^2i(A_j^{1/2}w,y_j)^{\sim}\bigg\}
\bigg(\prod_{j=1}^2E_{x_j} \big[G_j(w;x_j+x_{0,j})\big]\bigg)d f(w)\\
&=\int_{C_{a,b}'[0,T]}\exp \bigg\{  \sum_{j=1}^2i(A_j^{1/2}w,y_j)^{\sim}\bigg\} \\
&\qquad\times
\exp\bigg\{\sum_{j=1}^2\bigg[-\frac{\lambda_j}{2}(A_jg_{j},g_j)_{C_{a,b}'} 
-\lambda_j^{1/2}(A_j^{1/2}g_{j},a)_{C_{a,b}'}\bigg]\bigg\}\\
&\qquad\times
\bigg(\prod_{j=1}^2E_{x_j}\Big[\exp\Big\{i(A_j^{1/2}w,\lambda_j^{-1/2}x_j)^{\sim}
+\lambda_j^{1/2}( A_j^{1/2}g_j, x_j)^{\sim}\Big\}\Big]\bigg)df(w)\\
&=\exp\bigg\{\sum_{j=1}^2\bigg[-\frac{\lambda_j}{2}(A_jg_{j},g_j)_{C_{a,b}'} 
-\lambda_j^{1/2}(A_j^{1/2}g_{j},a)_{C_{a,b}'}\bigg]\bigg\}\\
&\quad\times
E_{\vec x}\bigg[\int_{C_{a,b}'[0,T]}\exp \bigg\{  
\sum_{j=1}^2\Big[i(A_j^{1/2}w,y_j)^{\sim} 
+i(A_j^{1/2}w,\lambda_j^{-1/2}x_j)^{\sim}\Big]\bigg\}df(w) \\
&\qquad\qquad\qquad\quad\times
\exp\bigg\{\sum_{j=1}^2\lambda_j^{1/2}
\big( A_j^{1/2}g_j,x_j\big)^{\sim}\bigg\} \bigg]\\
&=\exp\bigg\{\sum_{j=1}^2\bigg[-\frac{\lambda_j}{2}(A_jg_{j},g_j)_{C_{a,b}'} 
-\lambda_j^{1/2}(A_j^{1/2}g_{j},a)_{C_{a,b}'}
-\lambda_j(A_j^{1/2}g_j,y_j)^{\sim}\bigg]\bigg\}\\
&\quad\times
E_{\vec x} \bigg[F(y_1+\lambda_1^{-1/2}x_1,y_2+\lambda_2^{-1/2}x_2) \\
&\qquad\qquad\times
\exp\bigg\{\sum_{j=1}^2\bigg[\lambda_j(A_j^{1/2}g_j,y_j)^{\sim}
+\lambda_j^{1/2}\big( A_j^{1/2}g_j,x_j\big)^{\sim}
\bigg]\bigg\}\bigg]\\
\end{split}
\]
\[
\begin{split}
&= \exp\bigg\{\sum_{j=1}^2\bigg[
-\frac{\lambda_j}{2}(A_jg_{j},g_j)_{C_{a,b}'} 
-\lambda_j^{1/2}(A_j^{1/2}g_{j},a)_{C_{a,b}'}
-\lambda_j(A_j^{1/2}g_j,y_j)^{\sim}\bigg]\bigg\}\\
&\quad\times
E_{\vec x} \big[\Phi_1(y_1+\lambda_1^{-1/2}x_1,
    y_2+\lambda_2^{-1/2}x_2)\big],
\end{split}
\]
where
\[
\Phi_1(x_1,x_2) 
 =F( x_1,x_2) 
\exp\bigg\{\sum_{j=1}^2\bigg[\lambda_j(A_j^{1/2}g_j, x_j\big)^{\sim}
\bigg]\bigg\}.
\]

\par
On the other hand for $\lambda_1>0$ and $\lambda_2>0$, 
we see  that  
\[
\begin{split}
&T_{\vec \lambda} (F^*)(y_1,y_2)\\
&=E_{\vec x} \big[F^*\big(y_1+\lambda_1^{-1/2}x_1,
      y_2 +\lambda_2^{-1/2}x_2\big)\big]\\
&=E_{\vec x} \bigg[F\big(y_1+\lambda_1^{-1/2}x_1,
      y_2+\lambda_2^{-1/2}x_2\big)\\
&\qquad\quad\times
\exp\bigg\{   \sum\limits_{j=1}^2\Big[-iq_j(A_j^{1/2}g_j,y_j
      +\lambda_j^{-1/2}x_j)^{\sim}\Big]\bigg\}\bigg]\\
&=E_{\vec x} \bigg[F\big(y_1+\lambda_1^{-1/2}x_1,y_2
     +\lambda_2^{-1/2}x_2\big)\\
&\qquad\quad\times
\exp\bigg\{  \sum\limits_{j=1}^2
 \Big[-iq_j(A_j^{1/2}g_j,y_j)^{\sim}
-iq\lambda_j^{-1/2}(A_j^{1/2}g_j,x_j)^{\sim}\Big]\bigg\}\bigg].
\end{split}
\]

\par
Next,  using   H\"older's inequality 
with $\lambda_1>0$ and $\lambda_2>0$, 
it follows that
\[
\begin{split}
&E_{\vec x} \Big[\Big|F^*(y_1+\lambda_1^{-1/2}x_1,
     y_2+\lambda_2^{-1/2}x_2)\\
& \qquad \qquad
-\Phi_1(y_1+\lambda_1^{-1/2}x_1,
     y_2+\lambda_2^{-1/2}x_2)\Big|\Big]\\
&=E_{\vec x} \bigg[\bigg|F\big(y_1+\lambda_1^{-1/2}x_1,
     y_2+\lambda_2^{-1/2}x_2\big)\bigg|\\
&\qquad\times
\bigg|1-\exp\bigg\{  \sum\limits_{j=1}^2
    \Big[(iq_j+\lambda_j)(A_j^{1/2}g_j,y_j)^{\sim}\\
&\qquad\qquad\qquad\qquad\qquad
+(iq_j\lambda_j^{-1/2}+\lambda_j^{1/2})(A_j^{1/2}g_j,x_j)^{\sim}\Big]\bigg\}
 \bigg|\bigg]\\
&\le \bigg(E_{\vec x} \bigg[\bigg|F\big(y_1+\lambda_1^{-1/2}x_1,
     y_2+\lambda_2^{-1/2}x_2\big)\bigg|^p\bigg]\bigg)^{1/p}\\
&\quad\times
\bigg(E_{\vec x} \bigg[\bigg|1-\exp\bigg\{  \sum\limits_{j=1}^2
    \Big[(iq_j+\lambda_j)(A_j^{1/2}g_j,y_j)^{\sim}\\
&\qquad\qquad\qquad\qquad\qquad
+(iq_j\lambda_j^{-1/2}+\lambda_j^{1/2})(A_j^{1/2}g_j,x_j)^{\sim}\Big]\bigg\}
 \bigg|^{p'}\bigg]\bigg)^{1/p'}.
\end{split}
\]
Note that each factor in the last expression has 
a  limit as $\vec \lambda=(\lambda_1,\lambda_2) 
   \to-i\vec q =(-iq_1,-iq_2)$ in $\mathbb C_+^2$,
and that
\[
\begin{split}
&\bigg(E_{\vec x}\bigg[\bigg|1 
 -\exp\bigg\{  \sum\limits_{j=1}^2\Big[(iq_j+\lambda_j)(A_j^{1/2}g_j,y_j)^{\sim}\\
&\qquad\qquad
+(iq_j\lambda_j^{-1/2}+\lambda_j^{1/2})(A_j^{1/2}g_j,x_j)^{\sim}\Big]\bigg\}
 \bigg|^{p'}\bigg]\bigg)^{1/p'}\rightarrow 0\\
\end{split}
\]
as $\vec \lambda=(\lambda_1,\lambda_2) 
  \to-i\vec q= (-iq_1,-iq_2)$ in $\mathbb C_+^2$. 
Hence we have that
\[
\begin{split}
&T_{\vec q}^{(p)}(F)(y_1+A_1^{1/2}g_1,y_2+A_2^{1/2}g_2)\\
&=\operatorname*{l.i.m.}\limits_{\vec\lambda\to -i\vec q}
T_{\vec\lambda}(F)(y_1+A_1^{1/2}g_1,y_2+A_2^{1/2}g_2)\\
&=\operatorname*{l.i.m.}\limits_{\vec\lambda\to -i\vec q}
\exp\bigg\{\sum_{j=1}^2\bigg[-\frac{\lambda_j}{2}(A_jg_{j},g_j)_{C_{a,b}'} 
-\lambda_j^{1/2}(A_j^{1/2}g_{j},a)_{C_{a,b}'}\\
& \qquad\qquad\qquad\qquad
  -\lambda_j(A_j^{1/2}g_j,y_j)^{\sim}\bigg]\bigg\}
E_{\vec x}\big[\Phi_1(y_1+\lambda_1^{-1/2}x_1,y_2+\lambda_2^{-1/2}x_2)\big]\\
&=\exp\bigg\{\sum_{j=1}^2\bigg[ \frac{iq_j}{2}(A_jg_{j},g_j)_{C_{a,b}'} 
-(-iq)^{1/2}(A_j^{1/2}g_{j},a)_{C_{a,b}'} 
  +iq_j(A_j^{1/2}g_j,y_j)^{\sim}\bigg]\bigg\}\\
& \qquad \times
\operatorname*{l.i.m.}\limits_{\vec\lambda\to -i\vec q}
E_{\vec x}\big[\Phi_1(y_1+\lambda_1^{-1/2}x_1,y_2+\lambda_2^{-1/2}x_2)\big]\\
&=\exp\bigg\{\sum_{j=1}^2\bigg[ \frac{iq_j}{2}(A_jg_{j},g_j)_{C_{a,b}'} 
-(-iq)^{1/2}(A_j^{1/2}g_{j},a)_{C_{a,b}'}
  +iq_j(A_j^{1/2}g_j,y_j)^{\sim}\bigg]\bigg\}\\
& \qquad \times
\operatorname*{l.i.m.}\limits_{\vec\lambda\to -i\vec q}
T_{\vec \lambda}(F^*)(y_1 ,y_2 )\\
&=\exp\bigg\{\sum_{j=1}^2\bigg[ \frac{iq_j}{2}(A_jg_{j},g_j)_{C_{a,b}'} 
-(-iq)^{1/2}(A_j^{1/2}g_{j},a)_{C_{a,b}'}
  +iq_j(A_j^{1/2}g_j,y_j)^{\sim}\bigg]\bigg\}\\
& \qquad \times
T_{\vec q}(F^*)(y_1 ,y_2 ).\\
\end{split}
\]
\end{proof}

\par
The following corollary follows from 
equation   \eqref{eq:t1q0} above.

\begin{corollary}\label{eq:coro010}
Let $q_0$, $F$, $g_1$ and $g_2$ be 
as in Theorem \ref{thm:tpq-trans}.
Then for all real numbers  $q_1$ and $q_2$ 
with $|q_j|> q_0$, $j\in\{1,2\}$,
\[
\begin{split}
&E_{\vec x}^{\text{\rm anf}_{\vec q}}
[F(x_1+A_1^{1/2}g_1,x_2+A_2^{1/2}g_2)  ]\\
&=\exp\bigg\{\sum\limits_{j=1}^2
\bigg[\frac{iq_j}{2}(A_jg_j,g_j)_{C_{a,b}'}
    -(-iq_j)^{1/2} (A_j^{1/2}g_j,a)_{C_{a,b}'}\bigg]\bigg\}\\
&\quad \times
  E_{\vec x}^{\text{\rm anf}_{\vec q}}
 \bigg[ F(x_1,x_2)
\exp\bigg\{   \sum\limits_{j=1}^2
\Big[-iq(A_j^{1/2}g_j,x_j)^{\sim}\Big]\bigg\} \bigg].
\end{split}     
\] 
\end{corollary}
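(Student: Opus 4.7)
The plan is to deduce this corollary directly from Theorem \ref{thm:tpq-trans} by specializing $(y_1,y_2)=(0,0)$. First I recall from equation \eqref{eq:t1q0} and part (1) of Remark \ref{re:remark} that, for the functionals under consideration, one has the point evaluation identity
\[
T_{\vec q}^{(p)}(G)(y_1,y_2)
 = E_{\vec x}^{\text{\rm anf}_{\vec q}}[G(y_1+x_1,\,y_2+x_2)]
\]
for every $p\in[1,2]$ and in particular at $(y_1,y_2)=(0,0)$. This turns both sides of the desired formula into instances of the $L_p$ analytic GFFT evaluated at a specific point of $C_{a,b}^2[0,T]$: the left-hand side becomes $T_{\vec q}^{(p)}(F)(A_1^{1/2}g_1,\,A_2^{1/2}g_2)$ and the generalized analytic Feynman integral on the right becomes $T_{\vec q}^{(p)}(F^{*})(0,0)$, where $F^{*}$ is the functional defined in the statement of Theorem \ref{thm:tpq-trans}.

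Next I would invoke Theorem \ref{thm:tpq-trans} with $y_1=y_2=0$. The middle factor on the right-hand side of \eqref{eq:tpq-trans} is
\[
\exp\bigg\{\sum_{j=1}^{2} iq_j (A_j^{1/2}g_j,\,0)^{\sim}\bigg\}
 = \exp\{0\} = 1,
\]
so that equation \eqref{eq:tpq-trans} collapses to
\[
T_{\vec q}^{(p)}(F)(A_1^{1/2}g_1,\,A_2^{1/2}g_2)
 = \exp\bigg\{\sum_{j=1}^{2}\bigg[\frac{iq_j}{2}(A_jg_j,g_j)_{C_{a,b}'}
 -(-iq_j)^{1/2}(A_j^{1/2}g_j,a)_{C_{a,b}'}\bigg]\bigg\}\,
 T_{\vec q}^{(p)}(F^{*})(0,0).
\]
Combining this with the point-evaluation identity applied to $F$ (at the point $(A_1^{1/2}g_1,A_2^{1/2}g_2)$) and to $F^{*}$ (at the origin) yields exactly the identity claimed in Corollary \ref{eq:coro010}.

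There is essentially no obstacle here: the content is entirely in Theorem \ref{thm:tpq-trans}, and the only small point requiring care is that the point-evaluation identity linking $T_{\vec q}^{(p)}$ to the generalized analytic Feynman integral is an s-a.e.\ statement in general, but for our class $\mathcal F_{A_1,A_2}^{\,\,q_0}$ both sides are given by the absolutely convergent integral representation in equation \eqref{eq:tpq}, which is well-defined pointwise. Consequently the substitution $(y_1,y_2)=(0,0)$ into \eqref{eq:tpq-trans} is legitimate and the corollary follows immediately.
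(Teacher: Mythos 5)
Your proposal is correct and follows essentially the same route as the paper, which simply derives the corollary from Theorem \ref{thm:tpq-trans} by setting $(y_1,y_2)=(0,0)$ and invoking the point-evaluation identity \eqref{eq:t1q0} (equivalently Remark \ref{re:remark}(1)). Your extra remark that the s-a.e.\ qualification is harmless because both sides are given by the everywhere-defined absolutely convergent integral in \eqref{eq:tpq} is a point the paper leaves implicit, but it does not change the argument.
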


\par
By (1) of Remark \ref{re:A} and Corollary  \ref{eq:coro010} above, 
we have the following corollary.

\begin{corollary}\label{coro:cab}
Let $q$ be a nonzero real number, 
let $g_0\in C_{a,b}'[0,T]$ and 
let  $F$ be an  element of   $\mathcal F(C_{a,b}[0,T])$ 
given by equation \eqref{eq:element}. 
Then
\begin{equation}\label{eq:cab}
\begin{split}
&\int_{C_{a,b}[0,T]}^{\text{\rm  anf}_{q}}
    F( x  + g_0 ) d  \mu(x)\\
&\stackrel{*}{=}
\exp\bigg\{\frac{iq}{2}  \|g_0\|_{C_{a,b}'}^{2}
    -(-iq)^{1/2} ( g_0,a)_{C_{a,b}'}\bigg\} \\
&\quad \times
  \int_{C_{a,b}[0,T]}^{\text{\rm  anf}_{q}} 
   F(x )\exp \{- iq ( g_0,x)^{\sim} \} d \mu( x)\\
\end{split}
\end{equation}
where  $\stackrel{*}{=}$   means that if either side exists,
then both sides exist and   equality holds.
\end{corollary}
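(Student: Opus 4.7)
The plan is to reduce Corollary \ref{coro:cab} to the two-parameter result Corollary \ref{eq:coro010} by specializing the operators $A_1$ and $A_2$ suitably and then using Remark \ref{re:A}(1) to collapse the product-space analytic Feynman integrals down to single-space ones.

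First, I would lift $F\in\mathcal{F}(C_{a,b}[0,T])$ to the product space by setting $\tilde F(x_1,x_2):=F(x_1)$. With $F(x)=\int_{C_{a,b}'[0,T]}\exp\{i(w,x)^{\sim}\}df(w)$, taking $A_1$ to be the identity operator on $C_{a,b}'[0,T]$ and $A_2\equiv 0$, one immediately has
\[
\tilde F(x_1,x_2)=\int_{C_{a,b}'[0,T]}\exp\{i(A_1^{1/2}w,x_1)^{\sim}+i(A_2^{1/2}w,x_2)^{\sim}\}\,df(w),
\]
so $\tilde F\in\mathcal{F}_{A_1,A_2}^{\,\,a,b}$ with associated measure $f$. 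Under the hypothesis that either side of \eqref{eq:cab} exists (this is the content of the $\stackrel{*}{=}$ symbol), pick any real $q_2\ne0$ with $|q_2|>q_0$ for a suitable $q_0>0$ witnessing the required integrability $\int k(q_0;A_1,A_2;w)\,d|f|(w)<\infty$; in the present situation $\|A_2^{1/2}\|_o=0$, so this reduces to the mild growth condition $\int\exp\{(2q_0)^{-1/2}\|w\|_{C_{a,b}'}\|a\|_{C_{a,b}'}\}\,d|f|(w)<\infty$, which is harmless under existence of the analytic Feynman integrals in question.

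Next, I would apply Corollary \ref{eq:coro010} to $\tilde F$ with the choices $g_1=g_0$, $g_2=0$, $q_1=q$, and the $q_2$ above. With $A_2\equiv 0$, the terms $A_2^{1/2}g_2$, $(A_2g_2,g_2)_{C_{a,b}'}$, $(A_2^{1/2}g_2,a)_{C_{a,b}'}$ and $(A_2^{1/2}g_2,x_2)^{\sim}$ all vanish, so the identity collapses to
\[
E_{\vec x}^{\text{\rm anf}_{\vec q}}\bigl[\tilde F(x_1+g_0,x_2)\bigr]
=\exp\!\Bigl\{\tfrac{iq}{2}\|g_0\|_{C_{a,b}'}^{2}-(-iq)^{1/2}(g_0,a)_{C_{a,b}'}\Bigr\}\,
E_{\vec x}^{\text{\rm anf}_{\vec q}}\!\bigl[\tilde F(x_1,x_2)\exp\{-iq(g_0,x_1)^{\sim}\}\bigr].
\]

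Finally, I would invoke (1) of Remark \ref{re:A} together with \eqref{eq:t1q0} to peel off the spurious $x_2$-dependence: since both $\tilde F(x_1+g_0,x_2)$ and $\tilde F(x_1,x_2)\exp\{-iq(g_0,x_1)^{\sim}\}$ depend only on $x_1$, their product-space analytic Feynman integrals equal the corresponding single-space analytic Feynman integrals $E_x^{\text{anf}_q}[F(x+g_0)]$ and $E_x^{\text{anf}_q}[F(x)\exp\{-iq(g_0,x)^{\sim}\}]$, respectively. Substituting yields \eqref{eq:cab} exactly. The main obstacle in making this rigorous is the bookkeeping surrounding the $\stackrel{*}{=}$ convention: one must verify that existence of either single-space side is equivalent to existence of the associated product-space side (so that Corollary \ref{eq:coro010} genuinely applies), and this is precisely what the reduction in Remark \ref{re:A}(1) provides, since it identifies the product-space transform pointwise with the single-space one whenever either exists.
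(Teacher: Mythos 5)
Your proposal follows essentially the same route as the paper, which derives Corollary \ref{coro:cab} in one line by combining (1) of Remark \ref{re:A} (the specialization $A_1=I$, $A_2\equiv 0$ that identifies $\mathcal{F}_{A_1,A_2}^{\,\,a,b}$ with $\mathcal{F}(C_{a,b}[0,T])$ and collapses the product-space transform to the single-space one) with the translation formula of Corollary \ref{eq:coro010}; your write-up simply makes that reduction explicit. The one caveat is your assertion that the $k(q_0;\vec A;w)$-integrability condition is ``harmless under existence of the analytic Feynman integrals in question''---existence of those integrals does not by itself yield the bound \eqref{eq:finite000}---but the paper passes over the same point via the $\stackrel{*}{=}$ convention, so this does not distinguish your argument from the paper's.
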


\begin{remark}
In Corollary \ref{coro:cab},
taking $a(t)\equiv 0$ and $b(t)=t$,     
the general function space $C_{a,b}[0,T]$
reduces to the classical Wiener space $C_0[0,T]$.
Also, we know that 
equation \eqref{eq:cab} becomes
\begin{equation*}\label{eq:wiener}
\begin{split}
&\int_{C_0[0,T]}^{\text{\rm  anf}_{q}}F( x  + g_0 ) d  m_w (x)\\
&=\exp\bigg\{\frac{iq}{2}  \|g_0'\|_2^{2}\bigg\}
  \int_{C_0[0,T] }^{\text{\rm  anf}_q} F(x )
\exp \bigg\{- iq \int_0^Tg_0'(t)d x(t)\bigg\} d m_w ( x)
\end{split}
\end{equation*}
where $\|\cdot\|_2$ is the norm on $L^2[0,T]$.
This result subsume similar known result
obtained by Cameron and Storvick \cite{CS82}.
\end{remark}

\setcounter{equation}{0}
\section{A Cameron-Storvick type theorem on $C_{a,b}^2[0,T]$}\label{sec:8}

In \cite{Cameron51}, 
Cameron (see \cite[Theorem A, p.145]{CS91})
expressed  the Wiener integral 
of the first variation of a functional $F$ 
in terms of the Wiener integral 
of the product of $F$ by a linear functional, 
and in \cite[Theorem 1]{CS91},
Cameron and Storvick obtained  
a similar result for the analytic Feynman integral 
on classical Wiener space.
In   \cite[Theorem 2.4, p.491]{CSY01},  
Chang, Song and Yoo also obtained
a Cameron-Storvick  theorem  on abstract Wiener space.
In \cite{CS03}, 
Chang and Skoug obtained  these results
for  functionals on the function space $C_{a,b}[0,T]$.
Also see \cite{CS02, CCS04} for related results involving
conditional (generalized) Feynman integrals  
and Fourier-Feynman transforms.

\par
In order to establish similar results for functionals 
in $\mathcal{F}_{A_1,A_2}^{\,\,a,b}$
(see Theorem \ref{thm:CS-B2} below) 
we first obtain a Cameron-Storvick type theorem
for the function space $C_{a,b}^2[0,T]$.

\begin{theorem}\label{thm:CS}
Let $g_1$ and $g_2$ be elements of $C_{a,b}'[0,T]$.
Let $F(x_1,x_2)$ be $\mu\times \mu$-integrable over $C_{a,b}^2[0,T]$. 
Assume that   $F$ has  a    first variation
$\delta F (x_1,x_2|g_1,g_2)$ for all $(x_1,x_2)\in C_{a,b}^2[0,T]$  
such that    for some  $\gamma>0$,
\[
\sup_{|h|\le \gamma }
\big|\delta F   (x_1 +h  g_1,x_2 +h  g_2|  g_1,g_2)\big|
\]
is $\mu\times \mu$-integrable over $C_{a,b}^2[0,T]$
as a function  of $(x_1,x_2)\in C_{a,b}^2[0,T]$.  
Then 
\begin{equation}\label{eq:CA-basic} 
\begin{split}
&E_{\vec x}[ \delta F(x_1,x_2|g_1,g_2)]\\
&  \quad  =
 E_{\vec x}\big[ F(x_1, x_2)\big\{(g_1, x_1)^{\sim}
   +(g_2, x_2)^{\sim}\big\}\big]\\
& \qquad 
-\big\{(g_1, a)_{C_{a,b}'}+(g_2, a)_{C_{a,b}'}\big\}
E_{\vec x}[ F(x_1,x_2)  ].
\end{split}
\end{equation}
\end{theorem}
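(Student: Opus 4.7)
The plan is to apply the translation theorem \eqref{eq:translation} along a single parameter $h$ that simultaneously shifts both coordinates, and then differentiate at $h=0$. For $h \in \mathbb{R}$ define $G(h) := E_{\vec x}[F(x_1 + hg_1, x_2 + hg_2)]$. Iterating \eqref{eq:translation} first in $x_1$ with $x_0 = hg_1$ and then in $x_2$ with $x_0 = hg_2$, and applying Fubini, I obtain $G(h) = \exp\{\phi(h)\}\, H(h)$, where
\[
\phi(h) := -\tfrac{h^2}{2}\bigl[\|g_1\|^2_{C_{a,b}'}+\|g_2\|^2_{C_{a,b}'}\bigr] - h\bigl[(g_1,a)_{C_{a,b}'}+(g_2,a)_{C_{a,b}'}\bigr]
\]
and $H(h) := E_{\vec x}\bigl[F(x_1,x_2)\exp\{hL(x_1,x_2)\}\bigr]$ with $L(x_1,x_2) := (g_1,x_1)^{\sim}+(g_2,x_2)^{\sim}$. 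Once both sides of this identity are shown differentiable at $h=0$, equating the derivatives will produce \eqref{eq:CA-basic}.

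Next, I would compute $G'(0)$ from the definition of $G$. The mean value theorem gives
\[
\frac{F(x_1+hg_1,x_2+hg_2)-F(x_1,x_2)}{h} = \delta F\bigl(x_1+\theta h g_1,\,x_2+\theta h g_2\,\big|\,g_1,g_2\bigr)
\]
for some $\theta = \theta(h,x_1,x_2) \in (0,1)$; for $|h| \le \gamma$ the right-hand side is dominated by $\sup_{|s|\le \gamma}|\delta F(x_1+sg_1,x_2+sg_2|g_1,g_2)|$, which is $\mu\times\mu$-integrable by hypothesis. Dominated convergence then yields $G'(0) = E_{\vec x}[\delta F(x_1,x_2|g_1,g_2)]$.

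Differentiating the product representation at $h=0$ also gives $G'(0) = \phi'(0) H(0) + H'(0)$, so the proof reduces to verifying that $H'(0) = E_{\vec x}[F(x_1,x_2) L(x_1,x_2)]$; this is the step I expect to be the main obstacle, since the hypothesis supplies no direct control on the integrability of $F\cdot L$. I would bootstrap it as follows. Applying \eqref{eq:translation} to the integrable functional $|F|$ with translations $\pm\gamma g_j$ gives $E_{\vec x}[|F|e^{\pm\gamma L}]<\infty$; the required integrability of $|F|(x_1\pm\gamma g_1,x_2\pm\gamma g_2)$ follows from the triangle inequality and the mean-value bound $|F(x_1\pm\gamma g_1,x_2\pm\gamma g_2)|\le |F(x_1,x_2)|+\gamma\sup_{|s|\le \gamma}|\delta F(x_1+sg_1,x_2+sg_2|g_1,g_2)|$. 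The elementary inequality $|t|\le(\alpha e)^{-1}e^{\alpha|t|}$ (valid for any $\alpha>0$), applied with $\alpha=\gamma/2$, then yields $|F|\cdot|L|\cdot e^{(\gamma/2)|L|}\le (2/(\gamma e))\,|F|(e^{\gamma L}+e^{-\gamma L})\in L^1(\mu\times\mu)$; and since $|e^{hL}-1|\le |hL|\,e^{|h||L|}$, this same function dominates $|F\cdot(e^{hL}-1)/h|$ for $|h|\le \gamma/2$. Dominated convergence delivers $H'(0) = E_{\vec x}[FL]$, and combining with $\phi'(0)=-[(g_1,a)_{C_{a,b}'}+(g_2,a)_{C_{a,b}'}]$ produces \eqref{eq:CA-basic}.
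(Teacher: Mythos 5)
Your proof is correct and rests on the same engine as the paper's --- the translation theorem \eqref{eq:translation} followed by differentiation at $h=0$ --- but you organize it differently and, at one point, more carefully. The paper keeps the two partial derivatives in the definition of $\delta F$ separate, applies the one-slot translation theorem inside $E_{x_1}$ and $E_{x_2}$ respectively, and differentiates each resulting product at $h=0$; you instead translate both coordinates simultaneously along the diagonal $h\mapsto(x_1+hg_1,x_2+hg_2)$, obtaining the single identity $G(h)=e^{\phi(h)}H(h)$ and equating derivatives at $0$. (Incidentally, your diagonal identity $\tfrac{d}{dh}F(x_1+hg_1,x_2+hg_2)=\delta F(x_1+hg_1,x_2+hg_2|g_1,g_2)$ is the correct one; the paper's intermediate claim that this quantity equals $2\tfrac{\partial}{\partial h}F(x_1+hg_1,x_2+hg_2)$ carries a spurious factor of $2$, harmless there because it is used only for an integrability statement. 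Both you and the paper implicitly assume the diagonal map is differentiable with derivative equal to the sum of the two partials, which the bare existence of $\delta F$ does not strictly guarantee.) Where you genuinely add value is the step $H'(0)=E_{\vec x}[FL]$: the paper differentiates under $E_{x_j}$ with no justification and does not address whether $E_{x_j}[F\exp\{h(g_j,x_j)^{\sim}\}]$ is even finite for $h\neq 0$, whereas your bootstrap --- the translation theorem applied to $|F|$ at $h=\pm\gamma$ together with the mean-value bound to get $E_{\vec x}[|F|e^{\pm\gamma L}]<\infty$, then the elementary inequality $|t|\le(\alpha e)^{-1}e^{\alpha|t|}$ to manufacture an integrable dominant for the difference quotients --- closes that gap cleanly. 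The only extra obligation your route incurs, integrability of $F(x_1+hg_1,x_2+hg_2)$ for $|h|\le\gamma$, is supplied by your mean-value bound, so nothing is lost.
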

\begin{proof}
First note that 
\[
\begin{split}
&\delta F (x_1+h  g_1,x_2+h g_2 |  g_1,g_2)\\
& =\frac{\partial}{\partial \lambda}
   F (x_1 +h g_1+\lambda g_1,x_2+h g_2  )\bigg|_{\lambda=0}
+\frac{\partial}{\partial \lambda}
   F (x_1+h g_1, x_2 +h g_2+\lambda g_2  )\bigg|_{\lambda=0}\\
& =\frac{\partial}{\partial \lambda}
   F (x_1 +(h  +\lambda) g_1,x_2+h g_2  )\bigg|_{\lambda=0}
+\frac{\partial}{\partial \lambda}
   F (x_1+h g_1, x_2 +(h +\lambda) g_2  )\bigg|_{\lambda=0}\\
& =\frac{\partial}{\partial \mu}
   F (x_1 + \mu g_1,x_2+h g_2  )\bigg|_{\mu=h}
+\frac{\partial}{\partial \mu}
   F (x_1+h g_1, x_2 +\mu g_2  )\bigg|_{\mu=h}\\
& =2 \frac{\partial}{\partial h}
   F (x_1 + h g_1,x_2+h g_2  ).
\end{split}
\]
But since 
\[
\sup_{|h|\le \gamma }
 \bigg| \frac{\partial}{\partial h}
   F (x_1 + h g_1,x_2+h g_2  )\bigg|
\]
is $\mu\times\mu$-integrable,
\[
\frac{\partial}{\partial h}   F (x_1 + h g_1,x_2+h g_2  )
\]
is $\mu\times\mu$-integrable  for sufficiently small values of $h$.
Hence by the Fubini theorem  and equation \eqref{eq:translation},
we see that 
\[
\begin{split}
&E_{\vec x}[ \delta   F(x_1, x_2|g_1,g_2)]\\
& = E_{\vec x}\bigg[ \frac{\partial}{\partial h}  F(x_1+h g_1, x_2) \Big|_{h=0}\bigg]
+   E_{\vec x}\bigg[\frac{\partial}{\partial h} F(x_1, x_2 +h g_2 ) \Big|_{h=0}\bigg]\\
& =  E_{x_2}\bigg[\frac{\partial}{\partial h}
   E_{x_1}[ F(x_1+h g_1, x_2)] \Big|_{h=0} \bigg]  
  +E_{x_1} \bigg[ \frac{\partial}{\partial h} 
     E_{x_2}[F(x_1, x_2 +h g_2 )  ]\Big|_{h=0} \bigg] \\
& =E_{x_2}\bigg[\frac{\partial}{\partial h}\bigg(
   \exp\bigg\{-\frac{h^2}{2}\|g_1\|_{C_{a,b}'}^2-h(g_1,a)_{C_{a,b}'}\bigg\}\\
& \qquad\qquad\qquad\qquad\times
E_{x_1}\big[ F(x_1, x_2)\exp\{h(g_1,x_1)^{\sim}\}\big]\bigg) 
\bigg|_{h=0} \bigg] \\
& \quad +
  E_{x_1} \bigg[  \frac{\partial}{\partial h } \bigg(
       \exp\bigg\{-\frac{h^2}{2}\|g_2\|_{C_{a,b}'}^2-h(g_2,a)_{C_{a,b}'}\bigg\}\\
& \qquad\qquad\qquad\qquad\times
   E_{x_2}\big[F(x_1, x_2)\exp\{h(g_2,x_2)^{\sim}\} \big] 
\bigg)\bigg|_{h=0} \bigg] \\
& = E_{\vec x}\big[ F(x_1, x_2)\big\{(g_1, x_1)^{\sim}+(g_2, x_2)^{\sim}\big\}\big]\\
& \qquad 
-\big\{(g_1, a)_{C_{a,b}'}+(g_2, a)_{C_{a,b}'}\big\}
E_{\vec x}\big[ F(x_1,x_2)  \big].
\end{split}
\]
\end{proof}

\begin{lemma}\label{lemma:CS}
Let $g_1$, $g_2$,  and $F$
be as in Theorem \ref{thm:CS}.
For each $\rho_1>0$ and $\rho_2 >0$, assume that   
$F(\rho_1 x_1,\rho_2 x_2)$ is $\mu\times\mu$-integrable.
Furthermore assume that $F (\rho_1 x_1, \rho_2 x_2)$ 
has a  first variation
$\delta F(\rho_1 x_1, \rho_2 x_2|\rho_1 g_1,\rho_2 g_2)$ 
for all $(x_1,x_2) \in  C_{a,b}^2[0,T]$  
such that    for some  positive function $\gamma(\rho_1,\rho_2)$,
\[
\sup_{|h|\le \gamma(\rho_1,\rho_2)}
\big|\delta F   (\rho_1 x_1 +\rho_1 h  g_1,\rho_2 x_2 +\rho_2 h  g_2|
      \rho_1 g_1,\rho_2 g_2)\big|
\]
is   $\mu\times \mu$-integrable over $C_{a,b}^2[0,T]$
as a function  of $(x_1,x_2)\in C_{a,b}^2[0,T]$.  
Then 
\begin{equation} \label{eq:CA-b2}
\begin{split}
&E_{\vec x} \big[\delta F(\rho_1 x_1,\rho_2 x_2|\rho_1 g_1,\rho_2 g_2) \big]\\
&  \quad  =
  E_{\vec x}\big[ F(\rho_1 x_1,\rho_2  x_2)  
\big\{(g_1,x_1)^{\sim}+(g_2, x_2)^{\sim}\big\} \big]\\
&\qquad 
-\big\{(g_1,a)_{C_{a,b}'}+(g_2, a)_{C_{a,b}'}\big\}
 E_{\vec x}\big[F(\rho_1 x_1,\rho_2  x_2)   \big].
\end{split}
\end{equation}
\end{lemma}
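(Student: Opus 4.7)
The plan is to reduce Lemma \ref{lemma:CS} to Theorem \ref{thm:CS} by introducing the scaled functional
\[
G(x_1,x_2) = F(\rho_1 x_1, \rho_2 x_2)
\]
and applying Theorem \ref{thm:CS} to $G$ in the directions $g_1,g_2\in C_{a,b}'[0,T]$.

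First I would establish the identity
\[
\delta G(x_1,x_2\mid g_1,g_2)
=\delta F(\rho_1 x_1,\rho_2 x_2\mid \rho_1 g_1,\rho_2 g_2).
\]
This is a direct chain-rule computation from Definition \eqref{eq:1st}: the partial derivative $\partial_h G(x_1+hg_1,x_2)|_{h=0}$ equals $\partial_h F(\rho_1 x_1+h\rho_1 g_1,\rho_2 x_2)|_{h=0}$, and similarly for the second summand; adding gives the claimed equality.

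Next I would verify that $G$ satisfies the hypotheses of Theorem \ref{thm:CS} with directions $(g_1,g_2)$. The $\mu\times\mu$-integrability of $G$ is part of the standing assumption on $F(\rho_1 x_1,\rho_2 x_2)$. The existence of $\delta G(x_1,x_2\mid g_1,g_2)$ for every $(x_1,x_2)\in C_{a,b}^2[0,T]$ follows from the identity just proved together with the assumed existence of $\delta F(\rho_1 x_1,\rho_2 x_2\mid \rho_1 g_1,\rho_2 g_2)$. For the dominating-integrability condition, with $\gamma=\gamma(\rho_1,\rho_2)$ one has
\[
\sup_{|h|\le \gamma}\bigl|\delta G(x_1+hg_1,x_2+hg_2\mid g_1,g_2)\bigr|
=\sup_{|h|\le \gamma}\bigl|\delta F(\rho_1 x_1+\rho_1 h g_1,\rho_2 x_2+\rho_2 h g_2\mid \rho_1 g_1,\rho_2 g_2)\bigr|,
\]
which is $\mu\times\mu$-integrable by hypothesis.

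Finally, applying Theorem \ref{thm:CS} to $G$ with directions $(g_1,g_2)$ yields
\[
E_{\vec x}[\delta G(x_1,x_2\mid g_1,g_2)]
=E_{\vec x}[G(x_1,x_2)\{(g_1,x_1)^{\sim}+(g_2,x_2)^{\sim}\}]
-\{(g_1,a)_{C_{a,b}'}+(g_2,a)_{C_{a,b}'}\}E_{\vec x}[G(x_1,x_2)],
\]
and rewriting $G$ and $\delta G$ in terms of $F$ via the identity above produces exactly \eqref{eq:CA-b2}. There is no genuine obstacle here—the only point requiring mild care is the chain-rule bookkeeping relating $\delta G(\,\cdot\,\mid g_1,g_2)$ to $\delta F(\rho_1\cdot,\rho_2\cdot\mid\rho_1 g_1,\rho_2 g_2)$ and the matching of the supremum-integrability hypothesis under the scaling $h\mapsto h$ (not $h\mapsto\rho_j h$), which is why the hypothesis in the lemma is stated with the specific scaled argument $\rho_1 h g_1,\rho_2 h g_2$.
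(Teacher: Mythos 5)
Your proposal is correct and follows essentially the same route as the paper: the paper likewise sets $R(x_1,x_2)=F(\rho_1 x_1,\rho_2 x_2)$, verifies $\delta F(\rho_1 x_1,\rho_2 x_2\mid\rho_1 g_1,\rho_2 g_2)=\delta R(x_1,x_2\mid g_1,g_2)$ by the same chain-rule bookkeeping, and then applies equation \eqref{eq:CA-basic} from Theorem \ref{thm:CS} to $R$. Your explicit verification of the supremum-integrability hypothesis for the scaled functional is a welcome extra detail the paper leaves implicit.
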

\begin{proof}
Let $R(x_1,x_2)=F(\rho_1 x_1,\rho_2 x_2)$.
Then we have that
\[
R(x_1+ h g_1, x_2)= F(\rho_1 x_1 + \rho_1 h g_1, \rho_2 x_2)
\]
and 
\[
R(x_1, x_2+ h g_2)= F(\rho_1 x_1, \rho_2 x_2 + \rho_2 h g_2)
\]
and that
\[
\frac{\partial }{\partial h}R(x_1+ h g_1, x_2)\bigg|_{h=0}
= \frac{\partial }{\partial h}F(\rho_1 x_1 + \rho_1 h g_1, \rho_2 x_2)\bigg|_{h=0}
\]
and 
\[
\frac{\partial }{\partial h}R(x_1, x_2+ h g_2)\bigg|_{h=0}
= \frac{\partial }{\partial h}F(\rho_1 x_1, 
  \rho_2 x_2 + \rho_2 h g_2)\bigg|_{h=0}.
\]
Thus we have
\[
\begin{split}
&\delta F(\rho_1 x_1,\rho_2 x_2|\rho_1 g_1, \rho_2 g_2)\\
&= \frac{\partial }{\partial h}F(\rho_1 x_1 + \rho_1 h g_1, \rho_2 x_2)\bigg|_{h=0}
   +\frac{\partial }{\partial h}F(\rho_1 x_1  , \rho_2x_2+ \rho_2h g_2)\bigg|_{h=0}\\
&= \frac{\partial }{\partial h}R(x_1+ h w_1, x_2)\bigg|_{h=0}
   +\frac{\partial }{\partial h}R(x_1, x_2+ h g_2)\bigg|_{h=0} \\
&=\delta R(  x_1, x_2| g_1, g_2).
\end{split}
\]
Hence by equation \eqref{eq:CA-basic}, we have
\[
\begin{split}
&E_{\vec x} \big[ \delta F(\rho_1 x_1,\rho_2 x_2|\rho_1g_1,\rho_2g_2) \big]\\
&=E_{\vec x} \big[ \delta R( x_1, x_2| g_1, g_2) \big]\\
&=E_{\vec x} \big[ R(x_1, x_2)\big\{(g_1,x_1)^{\sim}+(g_2, x_2)^{\sim}\big\}\big]\\
& \qquad 
-\big\{(g_1,a)_{C_{a,b}'}+(g_2, a)_{C_{a,b}'}\big\}
 E_{\vec x} \big[ R(x_1,x_2) \big]\\
&=E_{\vec x} \big[ F(\rho_1 x_1,\rho_2  x_2)\big\{(g_1,x_1)^{\sim}+(g_2, x_2)^{\sim}\big\}\big]\\
& \qquad 
-\big\{(g_1,a)_{C_{a,b}'}+(g_2, a)_{C_{a,b}'}\big\}E_{\vec x} \big[F(\rho_1x_1,\rho_2x_2) \big]
\end{split}
\]
which establishes equation \eqref{eq:CA-b2}.
\end{proof}

\begin{theorem} \label{thm:CS-B2} 
Let $g_1$, $g_2$,   and $F$
be as in Lemma \ref{lemma:CS}.
Then if any two of the three generalized  analytic Feynman integrals 
in the following equation exist,
then the third one also exists, and equality holds:
\begin{equation}\label{eq:CSthm}
\begin{split}
&E_{\vec x}^{\text{\rm anf}_{(q_1,q_2)}}\big[\delta  F( x_1, x_2|g_1, g_2)\big]\\
&=
-iE_{\vec x}^{\text{\rm anf}_{(q_1,q_2)}} \big[F( x_1, x_2)\big\{q_1(g_1,x_1)^{\sim}
+q_2 (g_2,x_2)^{\sim}\big\}\big]\\
&\quad
-i\Big\{(-iq_1)^{1/2}(g_1,a)_{C_{a,b}'}
  +(-iq_2)^{1/2}(g_2,a)_{C_{a,b}'}\Big\}
E_{\vec x}^{\text{\rm anf}_{(q_1,q_2)}}\big[F( x_1, x_2)\big].
\end{split}
\end{equation} 
\end{theorem}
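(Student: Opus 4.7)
The plan is to lift equation \eqref{eq:CA-b2} of Lemma \ref{lemma:CS} from a function-space identity into the generalized analytic Feynman setting by a three-step procedure: scale the Lemma appropriately, analytically continue the resulting identity from the positive orthant $\lambda_1,\lambda_2>0$ to $\mathbb{C}_+^2$, and pass to the boundary $\vec\lambda\to -i\vec q$. For $\lambda_1,\lambda_2>0$, I would apply Lemma \ref{lemma:CS} with $\rho_j=\lambda_j^{-1/2}$ and with the direction $g_j$ in the Lemma replaced by $\lambda_j^{1/2}g_j$; since $\delta F(y_1,y_2|\cdot,\cdot)$ is bilinear in the direction and $\rho_j\lambda_j^{1/2}=1$, the left-hand side of \eqref{eq:CA-b2} collapses to $E_{\vec x}[\delta F(\lambda_1^{-1/2}x_1,\lambda_2^{-1/2}x_2|g_1,g_2)]$, yielding
\[
\begin{split}
& E_{\vec x}[\delta F(\lambda_1^{-1/2}x_1,\lambda_2^{-1/2}x_2|g_1,g_2)]\\
&\quad= \sum_{j=1}^2 \lambda_j^{1/2} E_{\vec x}\big[F(\lambda_1^{-1/2}x_1,\lambda_2^{-1/2}x_2)(g_j,x_j)^{\sim}\big] - \sum_{j=1}^2 \lambda_j^{1/2}(g_j,a)_{C_{a,b}'} E_{\vec x}\big[F(\lambda_1^{-1/2}x_1,\lambda_2^{-1/2}x_2)\big].
\end{split}
\]
Via the real-linearity identity $\lambda_j^{1/2}(g_j,x_j)^{\sim}=\lambda_j(g_j,\lambda_j^{-1/2}x_j)^{\sim}$, each term on the right is transparently a function-space integral whose analytic continuation to $\mathbb{C}_+^2$ defines (a scalar multiple of) one of the three generalized analytic Feynman integrals appearing in \eqref{eq:CSthm}.

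Once this identity is established for $\lambda_j>0$, I would argue that if any two of the three Feynman integrals in \eqref{eq:CSthm} exist, then the corresponding analytic continuations on $\mathbb{C}_+^2$ are continuous at $\vec\lambda=-i\vec q$; the linear relation displayed above then forces the third to enjoy the same property. By the identity theorem for analytic functions of two variables, the identity persists throughout $\mathbb{C}_+^2$. Letting $\vec\lambda\to -i\vec q$ through $\mathbb{C}_+^2$, substituting $\lambda_j\to -iq_j$ and $\lambda_j^{1/2}\to(-iq_j)^{1/2}$ in each term, and extracting the factor $-i$ from $-iq_j=-i\cdot q_j$ in the first sum, produces precisely the formula \eqref{eq:CSthm}. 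The clause "if any two exist then the third exists and equality holds" follows at once: the equation is linear in the three Feynman integrals with nonzero coefficients $-iq_j$, $-iq_j$ and the constant in front of $E^{\text{anf}_{\vec q}}[F]$, so any two determine the third through it.

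The main technical obstacle is verifying that the left-hand side $E_{\vec x}[\delta F(\lambda_1^{-1/2}x_1,\lambda_2^{-1/2}x_2|g_1,g_2)]$ admits an analytic continuation in $\vec\lambda$ to $\mathbb{C}_+^2$, and that the $\vec\lambda$-limit commutes with the directional differentiation implicit in $\delta F$. This is where the uniform integrability hypothesis of Lemma \ref{lemma:CS}, namely the $\mu\times\mu$-integrability of $\sup_{|h|\le\gamma(\rho_1,\rho_2)}|\delta F(\rho_1 x_1+\rho_1 h g_1,\rho_2 x_2+\rho_2 h g_2|\rho_1 g_1,\rho_2 g_2)|$, plays its essential role: it furnishes the dominating function for a dominated convergence argument that justifies, uniformly on a complex neighborhood of $-i\vec q$ in $\mathbb{C}_+^2$, both the Fubini interchange underlying Lemma \ref{lemma:CS} and the continuity of the analytic extension at the boundary.
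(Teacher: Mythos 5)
Your proposal is correct and follows essentially the same route as the paper: the paper likewise applies Lemma \ref{lemma:CS} with the direction rescaled (it sets $y_j=\rho_j^{-1}g_j$, which with $\rho_j=\lambda_j^{-1/2}$ is exactly your replacement $g_j\mapsto\lambda_j^{1/2}g_j$), obtains the identity \eqref{eq:CS-ll} for all $\lambda_1,\lambda_2>0$, and then invokes Definition \ref{def:gfi} to analytically continue and pass to the limit $\vec\lambda\to-i\vec q$. Your added remarks on the dominated-convergence role of the uniform integrability hypothesis and on the ``two imply the third'' linearity argument merely make explicit what the paper leaves implicit.
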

\begin{proof}
Let $\rho_1>0$ and $\rho_2>0$ be given.
Let $y_1= \rho_1^{-1}g_1$ and $y_2= \rho_2^{-1}g_2$.
By equation \eqref{eq:CA-b2},
\begin{equation}\label{eq:CS-l}
\begin{split}
&E_{\vec x} \big[\delta F(\rho_1 x_1,\rho_2 x_2| g_1,  g_2) \big]\\
&=E_{\vec x} \big[ \delta F(\rho_1 x_1,\rho_2 x_2| 
  \rho_1 y_1, \rho_2 y_2) \big]\\
&= E_{\vec x} \big[ F(\rho_1 x_1,\rho_2  x_2) 
  \big\{(y_1,x_1)^{\sim}+(y_2, x_2)^{\sim}\big\}\big]\\
&\quad 
-\big\{(y_1,a)_{C_{a,b}'}+(y_2, a)_{C_{a,b}'}\big\}
   E_{\vec x} \big[ F(\rho_1 x_1,\rho_2  x_2)  \big]\\
&= E_{\vec x} \big[ F(\rho_1 x_1,\rho_2  x_2) 
\big\{\rho_1^{-2}(g_1,\rho_1x_1)^{\sim}
  +\rho_2^{-2}(g_2, \rho_2x_2)^{\sim}\big\}\big]\\
&\quad 
-\big\{\rho_1^{-1}(g_1,a)_{C_{a,b}'}+\rho_2^{-1}(g_2, a)_{C_{a,b}'}\big\}
E_{\vec x} \big[ F(\rho_1 x_1,\rho_2  x_2) \big].
\end{split}
\end{equation}
Now let $\rho_1=\lambda_1^{-1/2}$ and $\rho_2=\lambda_2^{-1/2}$. 
Then equation \eqref{eq:CS-l} becomes
\begin{equation}\label{eq:CS-ll}
\begin{split}
&E_{\vec x} \big[ \delta F(\lambda_1^{-1/2} x_1,
   \lambda_2^{-1/2}x_2| g_1,  g_2) \big]\\
&= E_{\vec x} \big[ F(\lambda_1^{-1/2} x_1,\lambda_2^{-1/2}  x_2)   \
\big\{\lambda_1 (g_1,\lambda_1^{-1/2}x_1)^{\sim}
    +\lambda_2 (g_2, \lambda_2^{-1/2}x_2)^{\sim}\big\}  \big]\\
&\quad 
-\big\{\lambda_1^{1/2}(g_1,a)_{C_{a,b}'}
    +\lambda_2^{1/2}(g_2, a)_{C_{a,b}'}\big\}
E_{\vec x} \big[ F(\lambda_1^{-1/2} x_1,\lambda_1^{-1/2}  x_2)  \big].\\
\end{split}
\end{equation}
Since $\rho_1>0$ and $\rho_2>0$ were arbitrary,
we have that equation \eqref{eq:CS-ll} 
holds for all $\lambda_1>0$ and $\lambda_2>0$.
We now use  Definition \ref{def:gfi} to obtain
our desired conclusions.
\end{proof}

\setcounter{equation}{0}
\section{Applications of the Cameron-Storvick type theorem}\label{sec:last}

In this section we consider functionals 
in  the generalized Fresnel type class 
$\mathcal{F}_{A}^{\,\,a,b}\equiv \mathcal{F}_{A_+,A_-}^{\,\,a,b}$
where $A$, $A_+$ and $A_-$ are related by 
equation \eqref{eq:decomposition} above.

\par
Let $\phi$ be a function of bounded variation on $[0,T]$.
Define an operator $A:C_{a,b}'[0,T]\to C_{a,b}'[0,T]$ by
\[
Aw(t)
=\int_0^t \phi(s)D_swd b(s)
=\int_0^t \phi(s)\frac{w'(s)}{b'(s)} d b(s)
=\int_0^t \phi(s)z(s)d b(s)
\]
for $w(t)=\int_0^tz(s)d b(s)$.
It is easily shown that  $A$ is a  self-adjoint operator.
We also see that $A=A_+-A_-$ where
\[
A_+w(t)
=\int_0^t \phi^{+}(s)D_swd b(s)
\,\,\hbox{ and }\,\,
A_-w(t)
=\int_0^t \phi^{-}(s)D_swd b(s)
\]
and $\phi^+$ and $\phi^-$ are the positive part 
and the negative part of $\phi$, respectively.
Also, $A_+^{1/2}$ and $A_-^{1/2}$ are given by
\[
A_+^{1/2}w(t)
=\int_0^t \sqrt{\phi^{+}}(s)D_swd b(s)
\,\,\hbox{ and }\,\,
A_-^{1/2}w(t)
=\int_0^t \sqrt{\phi^{-}}(s)D_swd b(s),
\]
respectively. 
For a more   detailed study of this decomposition, 
see \cite[pp.187-189]{JL00}.

\par
For fixed $g\in C_{a,b}'[0,T]$, 
let $g_1=A_+^{1/2}g$ and $g_2=A_-^{1/2}(-g)$.
Then we see that for all  $w(t)=\int_0^tz(s)d b(s)$ in $C_{a,b}'[0,T]$,
\begin{equation}\label{eq:q001}
\begin{split}
&(A_+^{1/2}w, g_1)_{C_{a,b}'}+(A_-^{1/2}w, g_2)_{C_{a,b}'}\\
&=(A_+^{1/2}w, A_+^{1/2}g)_{C_{a,b}'}
 -(A_-^{1/2}w,A_-^{1/2} g)_{C_{a,b}'}\\
&=(A_+w, g)_{C_{a,b}'}-(A_- w, g)_{C_{a,b}'}\\
&=(Aw,g)_{C_{a,b}'}.
\end{split}
\end{equation}
Using equation \eqref{eq:Dt}, we also  see that
\begin{equation}\label{eq:q002}
\begin{split}
(A_+^{1/2} w, a)_{C_{a,b}'}
&=(w,A_+^{1/2} a)_{C_{a,b}'}\\
&=\int_0^TD_t w\sqrt{\phi^{+}}(t)\frac{a'(t)}{b'(t)}db(t) \\
&=\int_0^TD_tw\sqrt{\phi^{+}}(t)a'(t)dt\\
&\equiv (D_tw\sqrt{\phi^+},a')
\end{split}
\end{equation}
and
\begin{equation}\label{eq:q003}
(A_-^{1/2} w, a)_{C_{a,b}'}
=\int_0^TD_tw\sqrt{\phi^{-}}(t)a'(t)dt\equiv (D_tw\sqrt{\phi^-},a').
\end{equation}

\par
Assume that  $F$ is an element of 
$\mathcal{F}_{A}^{\,q_0}\cap\mathcal{G}_{A}^{\,q_0}$
for some $q_0\in(0,1)$ 
where $\mathcal{F}_{A}^{\,q_0}\equiv \mathcal{F}_{A_+,A_-}^{\,\,q_0}$
and $\mathcal{G}_{A}^{\,q_0}\equiv \mathcal{G}_{A_+,A_-}^{\,\,q_0}$
(the classes  $\mathcal{F}_{A_1,A_2}^{\,\, q_0}$ and 
$\mathcal{G}_{A_1,A_2}^{\,\, q_0}$  are defined 
in Sections \ref{sec:3} and \ref{sec:5}, respectively).

\par
Using   \eqref{eq:gfi-delta} with $(q_1, q_2)=(1,-1)$, \eqref{eq:q001},
\eqref{eq:q002} and \eqref{eq:q003},   
we  obtain that
\begin{equation}\label{eq:connection}
\begin{split}
&E_{\vec x}^{\text{\rm{anf}}_{(1,-1)}}\big[
\delta F(x_1,x_2|A_+^{1/2}g  , -A_-^{1/2}g  )\big]\\
&=E_{\vec x}^{\text{\rm{anf}}_{(1,-1)}}\big[
  \delta F(x_1,x_2|g_1  , g_2  )\big]\\
& =\int_{C_{a,b}'[0,T]}  \Big[i(A_+ ^{1/2}w, A_+^{1/2} g)_{C_{a,b}'}
              -i(A_- ^{1/2}w, A_- ^{1/2} g)_{C_{a,b}'} \Big]\\
& \qquad\quad \times 
\exp\bigg\{   -\frac{i}{2}((A_+-A_-)w,w)_{C_{a,b}'}\bigg\}\\
& \quad\times
\exp\bigg\{ i\Big[(-i)^{-1/2}(A_+^{1/2}w,a)_{C_{a,b}'}
   +(i)^{-1/2}(A_-^{1/2}w,a)_{C_{a,b}'}\Big] \bigg\} df(w)\\
& =\int_{C_{a,b}'[0,T]} i(Aw,  g)_{C_{a,b}'}
\exp\bigg\{   -\frac{i}{2}(Aw,w)_{C_{a,b}'}\bigg\}\\
&\quad\times
\exp\bigg\{i\Big[(-i)^{-1/2} (D_tw\sqrt{\phi^+},a')
    +(i)^{-1/2} (D_tw\sqrt{\phi^-},a')\Big] \bigg\} df(w).\\
\end{split}
\end{equation}
We also see that for all $\rho_1>0$, $\rho_2>0$ and $h\in \mathbb R$
\[
\begin{split}
&\big|\delta F(\rho_1x_1+\rho_1 h g_1,  
     \rho_2x_2+\rho_2h g_2| \rho_1g_1,\rho_2g_2 )\big|\\
& \le 
\int_{C_{a,b}'[0,T]} \Big[\big|(A_+ ^{1/2}w, 
       A_+^{1/2}(\rho_1 g))_{C_{a,b}'} \big|
   +\big|(A_- ^{1/2}w, A_- ^{1/2}(-\rho_1 g))_{C_{a,b}'} \big| \Big] 
d |f| (w) \\
& \le 
\rho_1  \int_{C_{a,b}'[0,T]} \big|(A_+ w,  g )_{C_{a,b}'} \big| 
d |f| (w)
   + \rho_2 \int_{C_{a,b}'[0,T]} \big|(A_- w,  g)_{C_{a,b}'}  \big| 
d |f| (w)\\
& \le 
\big(\rho_1 \|A_+\|_o +\rho_2 \|A_-\|_o\big) \|g\|_{C_{a,b}'}
\int_{C_{a,b}'[0,T]}\|w \|_{C_{a,b}'}d |f| (w).
\end{split}
\]
But the last expression above
is bounded and is independent of $(x_1,x_2)\in C_{a,b}^2[0,T]$.
Hence 
$\delta F(\rho_1x_1+\rho_1 h g_1,\rho_2x_2+\rho_2 h g_2
    | \rho_1 g_1,\rho_2 g_2 )$
is $\mu\times\mu$-integrable in 
$(x_1,x_2)\in C_{a,b}^2[0,T]$ for every $\rho_1>0$ and $\rho_2>0$.
Also by Theorem \ref{thm:limit-1st} and Corollary \ref{coro:t1q-feynman},
the generalized analytic Feynman integrals
$E_{\vec x}^{\text{\rm anf}_{(1,-1)}} 
   [\delta F(x_1,x_2|A_+^{1/2}g,-A_-^{1/2}g)]$
and
$E_{\vec x}^{\text{\rm anf}_{(1,-1)}}[\delta F(x_1,x_2)]$  exist.
Thus by equation \eqref{eq:CSthm} together with equation \eqref{eq:connection}
we have 
\[
\begin{split}
&\int_{C_{a,b}'[0,T]} i(Aw,  g)_{C_{a,b}'}
\exp\bigg\{   -\frac{i}{2}(Aw,w)_{C_{a,b}'}\bigg\}\\
&\qquad\quad\times
\exp\bigg\{i\Big[(-i)^{-1/2} (D_tw\sqrt{\phi^+},a') 
   +(i)^{-1/2} (D_tw\sqrt{\phi^-},a')\Big] \bigg\} df(w)\\
&=-iE_{\vec x}^{\text{\rm anf}_{(1,-1)}} 
   \big[F( x_1, x_2)\big\{(A_+^{1/2}g,x_1)^{\sim}
+ (A_-^{-1/2}g,x_2)^{\sim}\big\}\big]\\
&\quad
-i\Big\{(-i )^{1/2}(D_tg\sqrt{\phi^+},a')
   -(i)^{1/2}(D_tg\sqrt{\phi^-},a')\Big\}
E_{\vec x}^{\text{\rm anf}_{(1,-1)}}\big[ F( x_1, x_2) \big].
\end{split}
\]


\section*{Acknowledgments}
 Seung Jun Chang worked as a corresponding author of this paper.
 The present research was conducted 
by the research fund of Dankook University in 2011.






\begin{thebibliography}{10}

\bibitem {AH76}  Albeverio,~S.,  ~H{\o}egh-Krohn,~R.:
  Mathematical Theory of Feynman Path Integrals.
  Lecture Notes in Math.,  vol. 523. Springer-Verlag, Berlin  (1976)
\bibitem {Cameron51} Cameron,~R.H.: 
  The first variation of an indefinite Wiener integral.
  Proc. Amer.  Math. Soc.  \textbf{2},   914--924 (1951)
\bibitem {CS80} Cameron,~R.H.,  Storvick,~D.A.: 
  Some Banach algebras of analytic Feynman integrable functionals. 
  In: 
  Analytic Functions, Kozubnik, 1979.  
  Lecture Notes in Math.,  vol. 798.,  pp. 18--67. Springer-Verlag, Berlin (1980)
\bibitem {CS82} Cameron,~R.H.,  Storvick,~D.A.: 
  A new translation theorem for the analytic Feynman integral.
  Rev. Roumaine Math. Pures  Appl. \textbf{27},  937--944  (1982)
\bibitem{CS87I} Cameron,~R.H.,  Storvick,~D.A.: 
  Relationships between the Wiener integral and the analytic Feynman integral.
  Rend.  Circ. Mat.   Palermo (2) Suppl. \textbf{17},  117--133 (1987)
\bibitem{CS87II}  Cameron,~R.H.,  Storvick,~D.A.: 
  Change of scale formulas for Wiener integral.
  Rend.  Circ. Mat.   Palermo (2) Suppl. \textbf{17},  105--115 (1987)
\bibitem {CS91}  Cameron,~R.H.,  Storvick,~D.A.: 
  Feynman integral of variations of functionals. 
  in: It\^o,~K., Hida,~T. (eds.) Gaussian Random Fields.   
  Ser. Probab. Stat.,  pp. 144--157. World Scientific, Singapore  (1991)
\bibitem {CJS87}   Chang,~K.S., Johnson,~G.W., Skoug,~D.L.:
   Functions in the Fresnel class.
   Proc. Amer.   Math. Soc. \textbf{100},   309--318  (1987)
\bibitem {CKY00}   Chang,~K.S., Kim,~B.S., Yoo,~I.:
   Analytic Fourier-Feynman transform  and convolution of functionals on abstract Wiener space.
   Rocky Mountain J.   Math. \textbf{30},   823--842  (2000)
\bibitem {CSY01}   Chang,~K.S., Song,~T.S., Yoo,~I.:
   Analytic Fourier-Feynman transform  and first variation on abstract Wiener space.
   J. Korean Math. Soc. \textbf{38},  485--501 (2001)
\bibitem {CC96}  Chang,~S.J., Chung,~D.M.:
   Conditional function space integrals with applications.
   Rocky Mountain J.   Math. \textbf{26},    37--62 (1996)
\bibitem {CS02}  Chang,~S.J.,  Skoug,~D.:
   Parts formulas involving conditional Feynman integrals.
   Bull. Austral. Math. Soc. \textbf{65},    353--369 (2002)
\bibitem {CS03}   Chang,~S.J.,  Skoug,~D.:
   Generalized Fourier-Feynman transforms and a first variation on function space.
   Integral Transforms  Spec. Funct. \textbf{14},   375--393 (2003)
\bibitem {CCS03}  Chang,~S.J., Choi,~J.G.,  Skoug,~D.:
   Integration by parts formulas involving generalized Fourier-Feynman transforms on function space.
   Trans. Amer. Math. Soc. \textbf{355},  2925--2948  (2003)
\bibitem {CCS04}  Chang,~S.J., Choi,~J.G.,  Skoug,~D.:
   Parts formulas involving conditional generalized Feynman integrals and
   conditional generalized Fourier-Feynman transforms on function space.
   Integral Transforms  Spec. Funct. \textbf{15},  491--512  (2004)
\bibitem {CCS07} Chang,~S.J., Choi,~J.G.,  Skoug,~D.:
   Evaluation  formulas for conditional function space integrals I.
   Stoch. Anal.  Appl.  \textbf{25}, 141--168  (2007)
\bibitem {CCS10} Chang,~S.J., Choi,~J.G.,  Skoug,~D.:
   Generalized Fourier-Feynman transforms, convolution products, and first
   variations on function space.
   Rocky Mountain J. Math.   \textbf{40}, 761--788  (2010)
\bibitem{CCL09} Chang,~S.J., Choi,~J.G.,  Lee,~S.D.: 
   A Fresnel type class on function space.
   J. Korean  Soc. Math. Educ. Ser. B: Pure Appl. Math. \textbf{16}, 107--119 (2009)
\bibitem{CCS09} Chang,~S.J., Chung,~H.S., Skoug,~D.: 
   Integral transforms of functionals in $L^2(C_{a,b}[0,T])$.
   J. Fourier Anal. Appl. \textbf{15}, 441--462 (2009)
\bibitem {CC12} Choi,~J.G., Chang,~S.J.:
   Generalized Fourier-Feynman transform  and sequential transforms on   function space.
   J. Korean   Math. Soc.    \textbf{49}, 1065--1082  (2012)
\bibitem{Folland}   Folland,~G.B.:
   Real Analysis  (2nd  edition).    John Wiley \& Sons  (1999)
\bibitem {Johnson82}  Johnson,~G.W.:
   The equivalence of two approaches to the Feynman integral.
   J. Math. Phys.   \textbf{23}, 2090--2096 (1982)

\bibitem {JS83}  Johnson,~G.W.,  Skoug,~D.L.:
   Notes  on the Feynman integral, III: The Schroedinger equation.
   Pacific J. Math. \textbf{105},  321--358 (1983)
\bibitem {JL00}  Johnson,~G.W.,   Lapidus,~M.L.:
    The Feynman Integral and Feynman's Operational Calculus.
    Clarendon Press, Oxford  (2000)
\bibitem {KB84}  Kallianpur,~G., Bromley,~C.:
   Generalized Feynman integrals using analytic continuation   in several complex variables.  
   In:  Pinsky,~M.A. (ed.) Stochastic Analysis and Applications,  pp. 217--267.
   Marcel Dekker, Inc., New York  (1984)
\bibitem {KKK85}  Kallianpur,~G. Kannan,~D.,  Karandikar,~R.L.:
   Analytic and sequential Feynman integrals on abstract Wiener and Hilbert spaces and a Cameron-Martin formula.  
   Ann. Inst. Henri  Poincar\'e  Probab.  Statist.  \textbf{21}, 323--361  (1985)
\bibitem{Nelson} Nelson,~E.:
   Dynamical Theories of Brownian Motion  (2nd  edition).
   Math. Notes, Princeton University Press, Princeton  (1967)
\bibitem {Yeh71}  Yeh,~J.:
   Singularity of Gaussian measures on function spaces induced by 
   Brownian motion processes  with non-stationary increments.
   Ill. J. Math.  \textbf{15},  37--46 (1971)
 \bibitem {Yeh73} Yeh,~J.:
   Stochastic Processes  and the Wiener Integral.
   Marcel Dekker, Inc., New York  (1973)
 \bibitem {YK07} Yoo,~I.,   Kim,~B.S.:
  Fourier-Feynman transforms for functionals in a generalized Fresnel class.
  Commun. Korean Math. Soc. \textbf{22},   75--90  (2007)

\end{thebibliography}
\end{document}